\documentclass[12pt,reqno,english]{amsart}
\usepackage[margin=0.95in]{geometry}
\usepackage{abstract, amsmath, amssymb, amsthm, array,
babel,bm,
catchfilebetweentags, centernot, cite, color,
dsfont,
emptypage,
fancyhdr, float, fnpos,
graphicx,
hyperref,
ifthen,
latexsym,
mathtools,
pdfpages,
qtree,
subcaption, 	
verbatim,
wrapfig,
xcolor,
lipsum}

\usepackage{algorithm}
\usepackage[noend]{algpseudocode}

\definecolor{orcidlogocol}{HTML}{A6CE39}

\algblock{Input}{EndInput}
\algnotext{EndInput}
\algblock{Output}{EndOutput}
\algnotext{EndOutput}
\newcommand{\Desc}[2]{\State \makebox[15em][l]{#1}#2}

\usepackage[draft]{todonotes}

\usepackage{booktabs}

\usepackage[all]{xy}

\hypersetup
{
    colorlinks,	
    citecolor=red,
    filecolor=black,
    linkcolor=blue,
    urlcolor=blue
}

\makeatletter
\def\BState{\State\hskip-\ALG@thistlm}
\makeatother

\SelectTips{cm}{}

\DeclarePairedDelimiterX\setc[2]{\{}{\}}{\,#1 \;\delimsize\vert\; #2\,}

\DeclarePairedDelimiter\abs{\lvert}{\rvert}
\DeclarePairedDelimiter\norm{\lVert}{\rVert}

\makeatletter
\let\oldabs\abs
\def\abs{\@ifstar{\oldabs}{\oldabs*}}

\let\oldnorm\norm
\def\norm{\@ifstar{\oldnorm}{\oldnorm*}}
\makeatother

\newtheorem{theorem}{Theorem}[section]
\newtheorem{corollary}[theorem]{Corollary}
\newtheorem{lemma}[theorem]{Lemma}
\newtheorem{proposition}[theorem]{Proposition}

\theoremstyle{definition}
\newtheorem{definition}[theorem]{Definition}
\newtheorem{example}[theorem]{Example}
\newtheorem{remark}[theorem]{Remark}

\newtheoremstyle{theorem-w/o-number}
  {\topsep}   
  {\topsep}   
  {\itshape}  
  {0pt}       
  {\bfseries} 
  {}         
  {5pt plus 1pt minus 1pt} 
  {}         
\theoremstyle{theorem-w/o-number}
\newtheorem*{theorem*}{Theorem}
\newtheorem*{teorema*}{Teorema}
\newtheorem*{corollary*}{Corollary}
\newtheorem*{corolario*}{Corolario}
\newtheorem*{lemma*}{Lemma}
\newtheorem*{lema*}{Lema}
\newtheorem*{proposition*}{Proposition}
\newtheorem*{proposicion*}{Proposici\'on}

\newtheoremstyle{definition-w/o-number}
  {\topsep}  
  {\topsep}   
  {\normalfont}  
  {0pt}      
  {\bfseries} 
  {}         
  {5pt plus 1pt minus 1pt} 
  {}         
\theoremstyle{definition-w/o-number}
\newtheorem*{definition*}{Definition}
\newtheorem*{definicion*}{Definici\'on}
\newtheorem*{example*}{Example}
\newtheorem*{ejemplo*}{Ejemplo}\newtheorem*{remark*}{Remark}
\newtheorem*{observacion*}{Observaci\'on}

\numberwithin{equation}{section}

\let\tmp\oddsidemargin
\let\oddsidemargin\evensidemargin
\let\evensidemargin\tmp
\reversemarginpar

\pagestyle{fancy}

\newcolumntype{L}[1]{>{\raggedright\let\newline\\\arraybackslash\hspace{0pt}}m{#1}}
\newcolumntype{C}[1]{>{\centering\let\newline\\\arraybackslash\hspace{0pt}}m{#1}}
\newcolumntype{R}[1]{>{\raggedleft\let\newline\\\arraybackslash\hspace{0pt}}m{#1}}

\usepackage[font=footnotesize,labelfont=bf]{caption}

\newcommand{\coproduct}{\coprod}

\def\argmin{\qopname\relax m{argmin}}

\usepackage[latin1]{inputenc}

\setcounter{tocdepth}{2}

\setcounter{secnumdepth}{3}

\fancyhead[LO]{Mapper stability}
\fancyhead[RE]{Belch\'{\i}, Brodzki, Burfitt, Niranjan}

\title[]{A numerical measure of the instability of Mapper-type 
algorithms}
\author{Francisco Belch\'{\i}}
\email{frbegu@gmail.com, ORCID: \href{https://orcid.org/0000-0001-5863-3343}{0000-0001-5863-3343}}
\author{Jacek Brodzki}
\email{j.brodzki@soton.ac.uk}
\author{Matthew Burfitt}
\email{m.i.burfitt@soton.ac.uk, ORCID:
\href{https://orcid.org/0000-0001-7176-3476}{0000-0001-7176-3476}}
\author{Mahesan Niranjan} 
\email{mn@ecs.soton.ac.uk}
\begin{document}
\thanks{This research was supported by the EPSRC grant EP/N014189/1.}

\maketitle

\begin{abstract}
Mapper is an unsupervised machine learning algorithm generalising the notion of clustering to obtain a geometric description of a dataset.
The procedure splits the data into possibly overlapping bins which are then clustered.
The output of the algorithm is a graph where nodes represent clusters and edges represent the sharing of data points between two clusters.
However, several parameters must be selected before applying Mapper and the resulting graph may vary dramatically with the choice of parameters.

We define an intrinsic notion of Mapper instability that measures the variability of the output as a function of the choice of parameters required to construct a Mapper output. Our results and discussion are general and apply to all Mapper-type algorithms. We derive theoretical results that provide estimates for the instability and suggest practical ways to control it.
We provide also experiments to illustrate our results and in particular we demonstrate that a reliable candidate Mapper output can be identified as a local minimum of instability regarded as a function of Mapper input parameters. 
\end{abstract}


\section{Introduction}
The success of topological data analysis rests on the discovery, demonstrated in many groundbreaking results, that methods 
from algebraic topology can provide insight into the structure and 
meaning of complex, multidimensional data \cite{Carlsson09}. 
Mapper is a very important tool in any practical implementation of the central philosophy of topological data analysis and has been 
used with great success in many contexts. The 
list is very long and diverse, and includes breakthrough results
in medical applications such as cancer research \cite{Cecco2015, Monica2011, Nicolau2014}, the study of 
asthma \cite{Hinks2016, Schneider2016, Schofield2019, Timothy2015}, diabetes \cite{Sarikonda2014, Li2015}
and others \cite{Carlsson2017, Nielson2015, Rucco2015}.
Mapper was also applied to a variety of other disciplines, including genomic data analysis \cite{Camara2017, Rizvi2017, Chang2013, Chan2013, Bowman2008},
chemistry \cite{Duponchel2018a, LeeEtAl17}, the 
study of aqueous solubility \cite{Pirashvili2018}, remote sensing \cite{Duponchel2018b}, 
soil science \cite{Savir2017}, agriculture \cite{Kamruzzaman2017},  sport \cite{Alagappan2012} and voting pattern analysis \cite{Pek2013}.
 
Broadly speaking, the Mapper algorithm provides an approximate 
representation of the structure of the data, typically given as a point cloud, through a simplicial complex. This complex provides 
a synthesis of the main topological features of the data in the sense that 
similar data points are grouped into clusters, and clusters are connected forming loops, flares, etc.
An important step in any Mapper implementation is a choice 
of a clustering procedure that will implement the required 
notion of similarity of data points. Given 
 that all known clustering procedures display
various levels of instability \cite{vonLuxburg10_ClustStability_overview}, it is to be expected that Mapper will suffer from a similar problem, and indeed, Mapper 
instability has been well demonstrated \cite{Carriere_17}. 

Our main contribution in this paper is a numerical 
measure of the instability of Mapper as a function of its input
parameters. We demonstrate that our notion of instability 
can be used to select parameter ranges which make the corresponding 
Mapper output reliable. 

\begin{figure}[ht!]
\centering
\def\svgwidth{450pt}
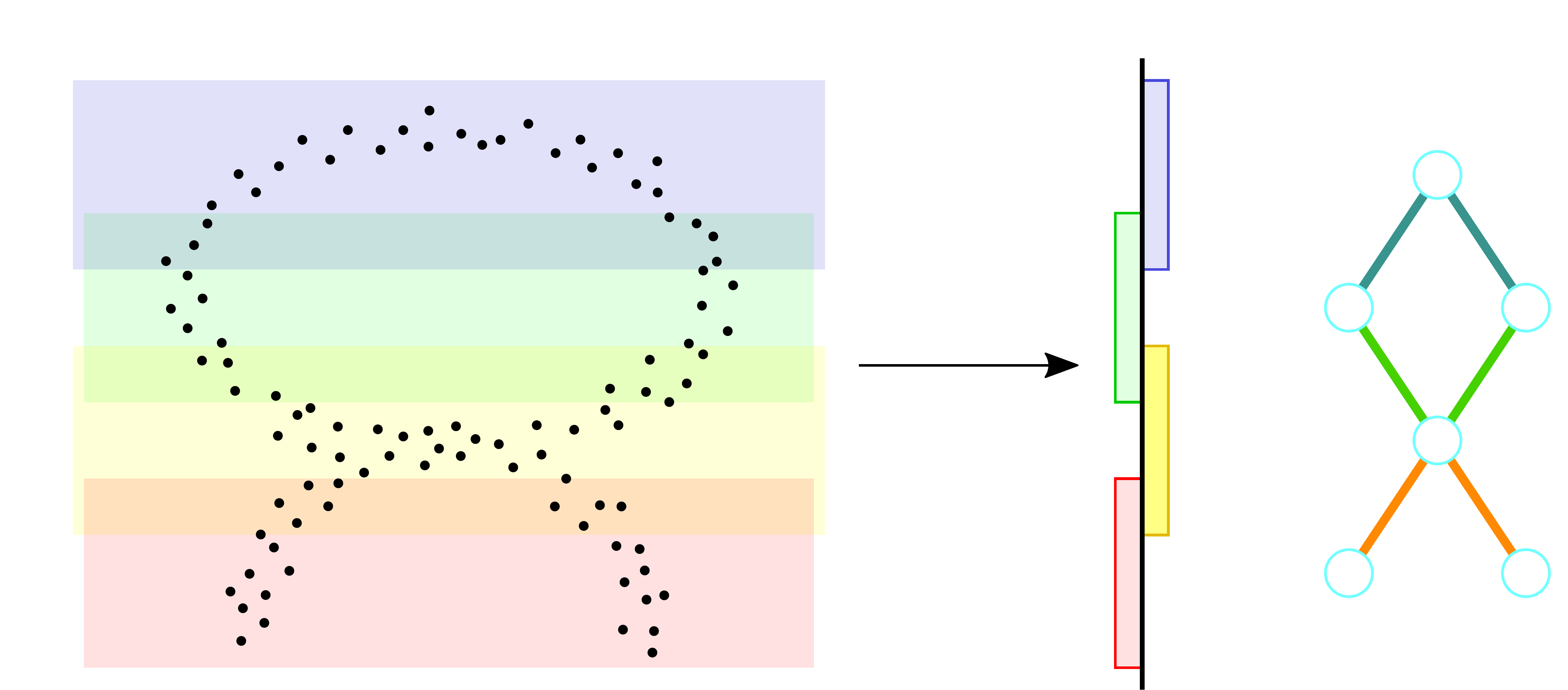
\caption{In the picture, $X$ is represented by the black dots on the left, $h$ assigns to each data point $(x, y)\in X$ its $y$ coordinate and the intervals $I^i$ are plotted as rectangles adjacent to the real line.
Mapper first clusters each group of data points $h^{-1}(I^i) \subseteq X$ and views each cluster as a node. If two clusters $c_i \subseteq h^{-1}(I^i)$, $c_j \subseteq h^{-1}(I^j)$ ($i \neq j$) share a point $x \in c_i \cap c_j$, the algorithm connects the nodes of $c_i$ and $c_j$ with an edge.
The resulting graph is the output of the classical Mapper algorithm.
Note that the resulting graph on the right looks like a simplified version of the point cloud $X$, exhibiting a hole on top and two flares at the bottom.}
\label{fig_MapperIllustration}
\end{figure}

To elucidate the problem, it is important to bear in mind that any 
practical use of Mapper on a dataset $X$ requires a number of choices. In the classical Mapper implementation, 
we need to choose a real valued function $h \colon X \longrightarrow \mathbb{R}$ (known as a \emph{filter} or a \emph{lens}) and a collection of intervals $\left\lbrace  I_i \right\rbrace _{i=1}^t$ covering $h(X)$, as can be seen in Figure \ref{fig_MapperIllustration}. The latter choice involves at least two further parameters, as we need to choose  both the length of the intervals and the amount of overlap between successive intervals. We also must choose a clustering method to apply on the bins $h^{-1}(I_i)$ to implement the required notion of similarity. 

Because of the choices involved, the creators of  Mapper remarked 
in their foundational paper \cite{Singh-Memoli-Carlsson07_foundationalMapperPaper} that  
the method is rather ad hoc, and posed the question of 
how to create a formal framework that would control the   necessary choices and would provide a measure of reliability of a particular
Mapper output. 
In this paper we provide an answer to this problem. 


\subsection{Contributions and related work} \label{sec:Related_work}
Following its many successful applications, several attempts have been made to reduce the number of choices required 
to create a Mapper output. 

Dey, M{\'{e}}moli and Wang \cite{Dey16,Dey17} study the structure and stability of a stable signature for what they called \emph{multiscale Mapper}, which uses a hierarchy of covers instead of a single one. However, it is not clear how to translate their findings to the context of the original Mapper.

Jeitziner, Carri\'{e}re, Rougemont, Oudot, Hess and Brisken \cite{Jeitziner17} develop a two-tier version of Mapper applied to clustering gene-expression data in order to identify subgroups.
Their version of Mapper  
is tailored specifically to the type of data for which it was 
intended and does not require any user choices. Within its intended regime, this version of Mapper is stable. 
It is not clear at this stage, however, how to extend it to 
other contexts. 

D{\l}otko \cite{Dlotko19} sets out a procedure to generate Mapper covers by balls centred around selected points in the data.
Once a cover is chosen a sequence of multiscale covers are obtained by expanding the ball sizes.

The work of Carri{\`e}re, Michel and Oudot \cite{Carriere_17} represent ideas most similar to the present paper. 
Carri{\`e}re and Oudot \cite{Carrire_Oudot16} provide bounds on the stability of Mapper in a deterministic setting on manifolds by comparing it to the Reeb graph.
This is achieved though a feature set obtained from an extended persistence diagram of the Mapper graph with respect to the filter function.
In particular, the features correspond to loops and flairs in Mapper graph.
Through further statistical analysis \cite{Carriere_17},
bounds are determined on the expectation of the bottleneck distance between the features of the Mapper and Reeb graphs, assuming points are sampled from and underlying manifold.  
This provides a way to obtain confidence regions for features on the persistence diagram that may be used to identify reliable Mapper outputs.

Our approach provides a more general setting than that of \cite{Carriere_17}.
Points are only assumed to be sampled from an underlying probability distribution rather than a distribution on a smooth manifold.
Furthermore the required covers may be chosen arbitrarily rather than being restricted to arising from an interval cover and filter function.

In particular, our approach will account for the size of features in terms of cluster size, not just their presence. This is an important improvement over methods relying on persistent homology, 
where cluster size is ignored. 
This new idea allows us to study the effects of the 
choice of a clustering algorithm, which can even be picked to be different on different
parts of the cover. This possible variability in the clustering procedure as well as any inherent instability of the chosen 
clustering procedure have not been investigated so far and we fill that gap here. 

Despite the ubiquity of clustering techniques within
 unsupervised learning, it has proved difficult to establish a good theoretical foundation for this methodology. A lot 
 of effort has been devoted to the study of quality and 
 stability of clustering. Highlights include the famous
 impossibility theorem of Kleinberg
 \cite{Kleinberg03}, who proved that there is no clustering procedure satisfying all of his natural axioms. This was 
 taken up by Carlsson and M\'{e}mmoli \cite{Carlsson10}, who proposed an axiomatic approach allowing them to provide an existence and uniqueness result for single-linkage clustering.
More recently, Strazzeri and S{\'{a}}nchez{-}Garc{\'{\i}}a \cite{Strazzeri18} provided a clustering procedure that satisfies Kleinberg's axioms after an alteration of the consistency axiom.

The work of Ackerman and Ben-David \cite{Ben-David09} studied clustering quality measures rather than the clustering functions, which provides a richer setting in which an alternative to Kleinberg's axioms can be consistently stated.

In a similar vein, instability provides 
a measure of reliability
of a particular output for the choice of 
input parameters. In particular, it will 
identify regions in the parameter space 
where the output is very sensitive to the changes of parameter values and so 
is typically  less reliable. 
Much effort has been invested in studying clustering stability and while the theoretical principles are agreed upon, at present there is no standard implementation to determine its value. For an overview see \cite{vonLuxburg10_ClustStability_overview}.
In particular, methods of data perturbation and resampling have been successful in practice, for instance in the biomedical setting \cite{Bittner2000, BenHur02, Levine2001}.
Resampling methods such as bagging \cite{Breiman1996,breiman1998} have also long been successfully applied within supervised leaning.
A procedure using resampling methods and statistics derived form the Mapper algorithm \cite{Riihimaki2019} has also been used to obtain very acurate classification results on tree species data.

The most comprehensive theoretical study of clustering stability 
by Ben-David and von Luxburg \cite{BenDavid_vonLuxburg08} defined a notion of clustering stability and related it to properties of the decision boundaries of the algorithm.
This is the starting point of the theoretical part of this work. We extend these notions to account for the considerably more complex Mapper construction.

This paper is organised as follows. 
In \S \ref{sec:Related_work}, we discuss some related work and its connections to the current paper.
In \S \ref{section:Clustering_on_each_term_of_Cover}, we give background on clustering stability required for the remainder of the paper. 
This allows us in \S \ref{section:Distances_of_Mapper_networks} to set out how the ideas of Ben-David and von Luxburg \cite{BenDavid_vonLuxburg08} can be generalised to the Mapper setting.
In particular, we introduce Mapper functions in Definition \ref{def:Mapper_functions}, which provide a new way of expressing Mapper outputs.
Crucially, this is used to define a similarity metric between Mapper functions, $D_M$ in Definition \ref{def:D_Mapper}.
The Distance $D_M$ captures the structure of the whole Mapper output and leads to the definition of our notion of instability of Mapper (Definition \ref{def:Mapper_instability}) with respect to a large class of clustering procedures.
In \S \ref{sec:ComputingInstability} we present an algorithm allowing us to experimentally obtain values of instability.
This leads in section in \S \ref{sec:InitialResults} to interesting experimental results which suggest that regions of relatively high instability correspond to structural changes in the Mapper output.
Hence local minima of the instability function with respect to parameter choices are good candidates for parameter selection allowing us to study Mapper through variations of all the parameters.

In the remainder of the paper, we develop theoretical tools to provide bounds on the instability of Mapper and to 
understand the main contributing factors. 
To do this, in \S \ref{sec:BounderyDistance} we introduce 
another similarity measure $D_\partial$,  Definition \ref{def:D_boundary}. 
The pseudo distance $D_\partial$ can be seen as a kind of interleaving distance, and it
relates the instability to the Mapper cover, enabling us to obtain useful bounds in \S \ref{sec:Upper_bounding_instability_of_Mapper},
Theorems \ref{thm:Bound_depending_mostly_on_gamma} and \ref{thm:Bound_depending_on_cover_sampleSize_etc}.  
These theoretical results unravel the main reasons for the 
instability, which are summarised  in Remarks \ref{rmk:Reasons_which_produce_instability_1} and \ref{rmk:Reasons_which_produce_instability_2}.
In \S \ref{sec:Sharpness_of_upper_bound}, we study how to sharpen the bounds on instability obtained in \S \ref{sec:Upper_bounding_instability_of_Mapper} and prove in Theorem \ref{thm:MapperStability} that for a large enough sample size and under reasonably constrained conditions these bounds can be arbitrarily small.
Implying that the Mapper instability under such conditions is also small.
This means that Theorem \ref{thm:MapperStability} might be seen as a kind of stability theorem for Mapper and justifies the central observations of \S \ref{sec:InitialResults}.
In \S \ref{sec:Verifying}, we present a number of experiments demonstrating our theoretically derived reasons for instability and explain how the reasons for instability cause the behaviour observed in \S \ref{sec:InitialResults}.


\section{Clustering stability}\label{section:Clustering_on_each_term_of_Cover}

The question of assessing the quality and stability of clustering procedures has attracted a lot of attention in recent years. In our discussion of Mapper stability, we will build on the foundational work on 
clustering stability by Ben-David and von Luxburg \cite{BenDavid_vonLuxburg08}. Therefore, we begin by introducing our setting in similar terms to theirs.

By a \emph{clustering} of a metric space $\left(U, D \right)$ we will mean a partition of $U$ into $s$ disjoint subsets or clusters. Equivalently, we may think of a clustering as a function from $U$ to a finite set of labels. In 
assessing the performance of a particular clustering procedure, the choice of labels to denote the clusters
will typically be unimportant, which motivates the following definition. 
\begin{definition}\label{def:Clustering}
	Let $\left(U, D \right)$ be a metric space and let $F$ denote the set of all functions	$f: U \longrightarrow \left\lbrace 1, 2, \ldots, s\right\rbrace$. 
	Then a clustering of $\left(U, D \right)$ is an element of
	\begin{equation*}
		\mathcal{F} \coloneqq {F}/{\sim},
	\end{equation*}
	where $f \sim g$ if there is a permutation $\pi$ of the set $\left\lbrace 1, 2, \ldots, s\right\rbrace $ 
	such that $f = \pi g$.
\end{definition}

To assess the efficiency of a particular clustering procedure we need a clustering quality function, which assigns a
notional cost or error to a clustering procedure. The objective of a clustering procedure is then to minimise the 
cost. Let $M_1(U)$ denote the space of all probability measures on $U$ (with
respect to the Borel $\sigma$-algebra).
For the purposes of this paper, a clustering quality function is a function which assigns a real number (the cost)
to a choice of clustering and a choice of a probability measure on $U$. In other words, a clustering 
quality function is a map
\begin{equation}
	Q \colon \mathcal{F} \times M_1(U) \longrightarrow \mathbb{R}.
\end{equation}

\begin{example}
	To make the previous statement more transparent, consider the $K$-means clustering. In this case, $Q(g, P)$ measures the expected distance between any point drawn according to the probability distribution $P$ and the cluster centre assigned to that point by the clustering function $g$. We give  the explicit formula for this quality function in (\ref{eq:kMeans_ContinuousObjectiveFunction}).
\end{example}

\begin{definition}\label{def:OptimalCluster}
	Given a probability measure $P \in M_1 \left( U \right)$, the \emph{optimal} clustering of $U$ is defined as the function $f\in \mathcal{F}$
	which minimizes $Q(-, P)$:
	\begin{equation}\label{eq:Argmin_Q^i}
		f = \argmin_{g \in \mathcal{F}} Q(g, P).
	\end{equation}
    The optimal clustering gives rise to a clustering map
	\begin{equation}\label{eq:OptimalClusteringFunction1}
		C \colon M_1(U) \to \mathcal{F},\quad P \mapsto \argmin_{g \in \mathcal{F}} Q(g, P).
	\end{equation}
\end{definition}

The clustering $f$ in Definition \ref{def:OptimalCluster} is only well defined if $Q(\cdot, P)$ has a unique global minimum, which will be our assumption in this paper. A main reason for this restriction is that in this 
work we want to understand the relation between the user-selected parameters of the input and the 
stability of the outcome. In the presence of more local minima of the quality function $Q(\cdot,P)$, 
clustering instability may be dominated by other phenomena, for example, the symmetry of the data. This case 
will be discussed in the follow-on work. In fact, as demonstrated by  \cite[Theorem 4]{BenDavidEtAl07}, $K$-means is stable if and only if there is a unique global minimiser, so this assumption is quite reasonable. More generally, in \cite[Theorem 15]{BenDavidEtAl06},
it is proved that multiple global minimisers with symmetry imply instability.

When working on a finite sample of $X= ( X_1, \ldots, X_n ) \in U^n$, we use another clustering quality function
\begin{equation}\label{eq:clustering_quality_function_on_finite_samples}
    Q_n: \mathcal{F}_n \times U^n \longrightarrow \mathbb{R},
\end{equation}
which we call the empirical quality function.
Unless stated otherwise, we assume that the quality function does not depend on the order of $X_1,\dots,X_n$. 

\begin{example}
	The empirical $K$-means quality function for $K=s$ clusters on a finite sample $X=( X_1, \ldots, X_n )  \in U^n$
	computes the average distance between points in the sample and their corresponding cluster centroid
	\begin{equation*}
		\label{eq:kMeans_EmpiricalObjectiveFunction}
		Q_n(f, X) = \frac{1}{n} \sum_{j=1}^n \sum_{k=1}^K \mathds{1}_{f(X_j) = k} D(X_j, c_k),
	\end{equation*}
	where $D(X_j, c_k)$ denotes the distance between the point $X_j \in U$ and the cluster centre $c_k$
	and $\mathds{1}_{f(X_j) = k}$ is an indicator function,
	$$ \mathds{1}_{f(X_j) = k} = 
	\begin{cases}
	1, \qquad f(X_j) = k \\
	0, \qquad f(X_j) \neq k .
	\end{cases}
	$$
	The continuous counterpart of $Q_n$ for $K$-means clustering is given by:
	\begin{equation*}\label{eq:kMeans_ContinuousObjectiveFunction}
		Q(f, P) = \sum_{k=1}^K \int_{x \in U} \mathds{1}_{f(x) = k} D(x, c_k) \,dP(x).
	\end{equation*}
\end{example}

\begin{remark}
    In practice, the clustering quality function and empirical quality function are related. Intuitively, $Q_n$ is a discretised version of $Q$, and we will make the additional assumption that
    $Q_n$ is uniformly consistent with $Q$ in the following sense. 
    For every $\gamma>0$, $ Q_n(f^n,X) \xrightarrow[n \rightarrow \infty ]{} Q(f, P)$ in probability, uniformly over probability distributions $P \in M_1(U)$.
    More precisely,
    $\forall \epsilon >0, \forall \delta>0, \exists N \in \mathbb{N}$ such that $\forall n \geq N, \forall P \in M_1(U)$, 
    \begin{equation*}
        P^n \left( \lvert Q_n(f^n, X) - Q(f, P) \rvert > \epsilon \right) \leq \delta.
    \end{equation*}
\end{remark}

\begin{definition}\label{def:OptimalFiniteCluster}
	Let $\mathcal{F}_n^X$ denote the space of clusterings of $X$.
	Given a point sample $X=( X_1, \ldots, X_n ) \in U^n$,
	define the \emph{optimal} empirical clustering $f \in \mathcal F_n^X$ of $U$  as 
	\begin{equation}\label{eq:Argmin_Q^i_n}
		f = \argmin_{g \in \mathcal{F}_n} Q_n(g, X),
	\end{equation}
	if $n \geq 1$ and set $f$ to be constant for $n=0$.
    The optimal empirical clustering gives rise to a clustering map
	\begin{equation}\label{eq:OptimalClusteringFunction}
		C_n \colon U^n \to \mathcal{F}_n^U, X \mapsto \argmin_{g \in \mathcal{F}_n} Q_n(g, X)
	\end{equation}
	where $\mathcal{F}_n^U$ is the union of all $\mathcal{F}_n^X$ for $X\in U^n$.
\end{definition}

Similarly to Definition \ref{def:OptimalCluster}, the clustering $f$ of definition \ref{def:OptimalFiniteCluster} may not exist.
In addition, even if such a global minimum exists, it may not be computable by the clustering algorithm.
For example, the empirical clustering quality function for the $K$-means clustering (\ref{eq:kMeans_EmpiricalObjectiveFunction})
need not have a global minimum.
However, nearest neighbour clusterings \cite{vonLuxburgEtAl08} or approximation schemes \cite{OstrovskyEtAl06} have empirical quality function
(\ref{eq:Argmin_Q^i_n}) with a unique global minimum and algorithms to compute them. For the theoretical part of this work, we will assume that $Q_n(-, X)$ has a unique global minimum.

We will need to be able to compare clusterings and 
for that we now recall the minimal matching distance. This is 
one of many measures of similarity developed for clusterings, and 
a good survey on this subject can be found in 
 \cite{Meila05}.

\begin{definition}\label{def:MinMatchDistence}
	The minimal matching distance is a map $D_m:\mathcal{F}_n \times\mathcal{F}_n \longrightarrow \mathbb{R}$ 
	that, for any two clusterings $f,g \in \mathcal{F}_n$
of a set of points $X = (X_1,\dots,X_n),$ is defined by
	\begin{equation*}\label{eq:D_Mapper_i}
		D_m(f, g) = \min_{\pi} \frac{1}{n} \sum_{j=1}^n \mathds{1}_{f(X_j) \neq \pi g(X_j)},
	\end{equation*}
	where $\pi$ runs over all permutations of the set $\left\lbrace 1, 2, \ldots, s\right\rbrace $ and
	$\mathds{1}_{f(X_j) \neq \pi g(X_j)}$ is an indicator function.
\end{definition}

It is well known that $D_m$ is a metric, and that it can be computed efficiently using a minimal bipartite matching algorithm.
Given a distance between clusterings of finite samples, we may define the instability with respect to an empirical quality function and a distance.
Here we consider the instability with respect to the minimal matching distance.

Any clustering $g\in\mathcal{F}_n^X$ on a finite point sample $X = ( X_1, \ldots, X_n ) \in U^n$ can be extended to a clustering $g\in\mathcal{F}$ on all of $U$  in the following  fashion.
Consider the order $X_1 \leq X_2 \leq \ldots \leq X_n$, and denote by $V_i$ the Voronoi cell of $X_i$, defined by
\begin{equation}\label{eq:Voronoi}
	V_ i \coloneqq \left\lbrace y \in U | D(y,X_i) \leq D(y, X_j) \; \forall \; j>i \text{ and } D(y,X_i) < D(y, X_k) \forall \; k<i \; \right\rbrace .
\end{equation}
Note that $\{V_i\}_i$ forms a partition of $U$.
In order to extend the clustering $g\in\mathcal{F}_n^X$ to $U$, we can simply assign the label $g(X_i)$ to all points of $U$ in the Voronoi cell of the point $X_i$. Which is, we extend $g\in\mathcal{F}_n^X$ so that it is constant on each Voronoi cell.

Given an empirical quality function $Q_n$, using the clustering function $C_n$ of Definition \ref{def:OptimalFiniteCluster}
and the minimal matching distance, we obtain the composition
\begin{equation}\label{eq:InstabilityVariable}
	\mathcal{I}(Q_n) \colon U^n \times U^n \xrightarrow{C_n \times C_n}\mathcal{F}_n^U \times \mathcal{F}_n^U \xhookrightarrow{i}
	\mathcal{F}_{2n}\times \mathcal{F}_{2n} \xrightarrow{D_m} \mathbb{R},
\end{equation}
where $\mathcal{F}_n^U$ is the union of all clustering functions $\mathcal{F}_n^X$ on $X$ for every $X\in U^n$. To define the inclusion $i$, we extend 
a clustering of $n$ points to a clustering of all of $U$ via Voronoi cells as just explained and focus only on the labels assigned to subsets of $2n$ points.

We would like the function $\mathcal{I}(Q_n)$ to be a random variable with respect to the probability measure on $U^n$,
induced by a probability measure on $U$.
From now on we restrict to quality functions such that $\mathcal{I}(Q_n)$ is a random variable, which we justify in the appendix.

\begin{definition}\label{def:Instability}
	Let  $(U,D)$ be a metric space equipped with an $n$-point  clustering quality function $Q_n \colon \mathcal{F}_n \times U^n \longrightarrow \mathbb{R}$ and a probability measure $P\in M_1(U)$. Then the clustering 
instability is given by
	\begin{equation*}
		\text{\rm{InStab}}_\text{\rm{Clustering}}(Q_n,P) = \mathbb{E}\left( \mathcal{I}(Q_n) \right),
	\end{equation*}
	where the expectation is taken over probability product measures of $P$ on pairs of $n$-samples in $U^n \times U^n$.
\end{definition}


\section{Comparing Mapper functions}
\label{section:Distances_of_Mapper_networks}
We now pass to the main part of this work. Our first goal is to provide a description of Mapper functions analogous to the 
representation of clusterings as functions introduced
 in Definition  \ref{def:Clustering}. A key part of our construction 
is a generalization of the minimal matching distance given in Definition \ref{def:MinMatchDistence} to a form suitable for 
comparing  Mapper outputs. The extension works by taking into account the clustering information contained in the resulting complexes. Our new notion of distance between Mapper functions is then used to define instability of the Mapper procedure and to derive upper bounds for this instability in 
\S \ref{sec:Upper_bounding_instability_of_Mapper}.

Let $\left( \mathcal{X}, D\right)$ be a metric space
and let $\mathcal{U}= \left\lbrace U_i\right\rbrace _{i=1}^t$ be a cover of $\mathcal{X}$, that is $ \mathcal{X}=\bigcup_{i=1}^t U_i$. Following standard Mapper terminology, we refer to the sets $U_i$ as \emph{bins}.
In the classical Mapper algorithm, these bins are obtained by fixing a real valued function $h:\mathcal{X} \longrightarrow \mathbb{R}$ (known as a \emph{filter function} or a \emph{lens}), fixing a collection of intervals $\left\lbrace I_i\right\rbrace _{i=1}^t$ covering $h(\mathcal X)$, and setting $U_i \coloneqq h^{-1} (I_i)$, as in Figure \ref{fig_MapperIllustration}.
Here, however, we do not assume, as we do not need to, that the 
cover $\{U_i\}_{i=1}^t$ of $X$ is of this particular form. 

In this paper, we will deal with a discrete and finite
sample $X$ drawn from a metric space $\mathcal{X}$. 
The cover $\{U_i\}_{i=1}^t$
of $\mathcal{X}$ restricts to a cover $\{U_i \cap X\}_{i=1}^t$ of the space $X$, and
we will simply write $U_i$ rather than $U_i \cap X$ to lighten the notation. We now use a clustering 
procedure to cluster each of the sets $U_i$, so that 
we have 
\[
U_i = V^i_1 \cup \dots \cup V_s^i. 
\]
A \emph{Mapper output} is a simplicial complex where an 
$n$-simplex $\sigma$ is an $(n+1)$-tuple of clusters
\[
\sigma = (V^{i_1}_{j_1}, \dots, V^{i_{n+1}}_{j_{n+1}})
\]
with a nonempty intersection.

To avoid the labels of clusters in $U_i$ being mixed up with those of $U_j$ for $i\neq j$,
we cluster each $U_i$ separately, that is, a clustering of $U_i$ is of the form
\begin{equation*}\label{eq:CoverClusters}
    f_i: U_i \longrightarrow \left\lbrace c^i_1, c^i_2, \ldots, c^i_s \right\rbrace,
\end{equation*}
where the $c^i_j$ are cluster labels.
Similarly to \S \ref{section:Clustering_on_each_term_of_Cover},
denote by $F^i$ the collection of all functions of the form (\ref{eq:CoverClusters}) and
$\mathcal{F}^i = F^i/\sim$, where
\begin{equation*}
    f_i \sim g_i \Longleftrightarrow \exists \pi: f_i = \pi g_i,
\end{equation*}
with $\pi$ denoting some permutation of the set $\left\lbrace c^i_1, c^i_2, \ldots, c^i_s \right\rbrace $.
To simplify the notation, we assume that every $U_i$ is partitioned into the same number $s$ of clusters.
However, all results hold when choosing a different $s$ for each bin.

Given a probability measure on $\mathcal X$, $P\in M_1(\mathcal{X})$, we consider the probability measure induced on $U_i$ by restricting $P$ to $U_i$ and setting
\begin{equation}\label{eq:Restricted_probability}
    P_i = \frac{1}{P(U_i)} \cdot P \in M_1 \left( U_i \right),
\end{equation}
and setting $P_i$ as the zero measure if $P(U_i)=0$.
Denote by $Q^i: \mathcal{F}^i \times M_1(U_i) \longrightarrow \mathbb{R}$ the clustering quality function used in $U_i$, and denote by 
\begin{equation}\label{eq:EmpiricalMapperQuality}
    Q_n^i: \mathcal{F}^i \times U_i^n \longrightarrow \mathbb{R}
\end{equation}
its empirical counterpart on size-$n$ samples of $U_i$.
As in Definition \ref{def:OptimalCluster}, the clustering quality function $Q^i$ determines a unique optimal clustering 
for each set $U_i$, and taken together, these optimal solutions
create an optimal Mapper output and a clustering function $C^i\colon M_1(U_i) \to \mathcal{F}^i$, for every $i= 1, \dots, t$. In a similar way,  
Definition \ref{def:OptimalFiniteCluster} and 
an empirical quality function $Q^i_n$ 
determines a unique optimal empirical 
clustering for each $U_i$ from which we 
obtain an optimal Mapper output and a 
clustering function $C_n^i\colon U_i^n \to \mathcal{F}^{U_i}_n$.
        
\begin{remark}\label{rmk:MapperParameters}
    As is now apparent, a Mapper output (as well as a Mapper function which we will discuss shortly) depends on the choice of a cover, a quality function as well as the particular 
    sample drawn from the ambient metric space. Moreover, implicit in the 
    choice of a quality function is a choice of a clustering procedure. We will refer to these choices collectively as \emph{Mapper parameters}. 
    In practice, these various choices usually come down to a list of real parameters. 
    For example, in the standard Mapper algorithm, the cover $U_i$ is the pullback 
    of an interval cover of $\mathbb{R}$, which is specified through 
    a choice of two parameters, resolution and gain. In this case, resolution  
    is the number and size of intervals in the cover,
    while gain controls the size of the overlap of these intervals. 
\end{remark}

\begin{definition}\label{def:Mapper_functions}
Let $\mathcal{X}$ be a metric space equipped with a cover $U_i$.    Given a clustering $f_i \in \mathcal{F}^i$ for each member $U_i$ of the cover
    we define the corresponding \emph{Mapper function} as the function which assigns
    to each $x\in \mathcal{X}$, the set of clustering labels 
    given to $x$ by the clustering functions $f_i$, for $i=1, \dots, t$. In other words, we have
    \begin{equation*}
    f(x) = \left\lbrace f_i(x) \,|\, i=1,\dots,t, \; x\in U_i \right\rbrace,
    \end{equation*}
    for each $x \in \mathcal{X}$.
    We denote the set of all Mapper functions on $(\mathcal{X},\{U_i\}^t_{i=1})$ by $\mathcal{N}$ and $\mathcal{N}_n$ on a finite $n$-point sample $X\in \mathcal{X}^n$.
\end{definition}

Note that for each $x\in\mathcal{X}$, the size of $f(x)$ depends only on the cover, since it is equal to the number of sets $U_i$ that contain $x$. 

Notice as well that a Mapper function contains more information than a Mapper output, which is an abstract simplicial complex constructed on the set of clusters. 
A Mapper function contains the information about the number of points 
in each cluster, and also in every nonempty intersection of those clusters. 
A Mapper output will be equivalent to a Mapper function if we label every simplex $\sigma = (V_0, V_1, \dots, V_k)$ of the Mapper output by the number of points of $X$
contained in the intersection 
\[
V_0\cap \dots \cap V_k
\]
of the clusters that are the vertices of $\sigma$. 

Let $X=(X_1, \ldots, X_n)\in \mathcal{X}^n$ be a point sample of $\mathcal{X}$.
Then, for each $1 \leq i \leq t$,
denote 
\begin{equation*}
    X^i = \left\lbrace X_1, \ldots, X_n\right\rbrace  \cap U_i
\end{equation*}
and let $n_i=n_i(X)$ be the number of elements in $X^i$.
We now introduce a Mapper version of definition (\ref{def:MinMatchDistence}).

\begin{definition}\label{def:D_Mapper}
    Given a point sample $X=(X_1,\dots,X_n) \in \mathcal{X}^n$ of $\mathcal{X}$, we define a distance function
    $D_M:\mathcal{N}_n \times \mathcal{N}_n \longrightarrow \mathbb{R}$
    which, for any two Mapper functions $f, g \in \mathcal{N}_n$, is given by
    \begin{equation*}
        D_M(f, g) = \min_{\pi} \frac{1}{n} \sum_{j=1}^n \mathds{1}_{f(X_j) \neq \pi g(X_j)},
    \end{equation*}
    where $\pi = \bigoplus_{i=1}^t \pi^i$, each $\pi^i$ runs over all permutations of
    $\left\lbrace c_1^i,\dots,c_s^i \right\rbrace $, and
    $\mathds{1}_{f(X_j) \neq \pi g(X_j)}$ is an indicator function.
\end{definition}

\begin{remark}\label{rmk:D_MM-counts-edge-info-as-well}
    For two Mapper functions $f,g$ on $\mathcal{X}$ covered by $\{ U_i \}_{i=1}^t$,
    the matching distance $D_m(f_i, g_i)$ of definition \ref{def:MinMatchDistence} counts the proportion of points of $X^i$ for which $f_i$ and $g_i$ disagree. 
    Since the clustering of each $U_i$ corresponds to the vertices of the Mapper output,
    considering $D_m$ on each $U_i$ would give no information about the higher dimensional simplicies of the Mapper output.
    However,
    Definition \ref{def:D_Mapper} takes into account not only the points that fall into different vertices of
    $f$ and $g$, but also all the edges and the higher dimensional simplices to  which the Mapper functions $f$ and $g$ assign different values. 
    
    A drawback of $D_M$ it that it can see certain intuitively larger changes of vertex labeling as equally distant.
    Consider the following example. Assume that $\mathcal{X}$ is covered by 
    three sets $U_1, U_2, U_3$ and that each of these sets is clustered into two clusters 
     labeled $c^i_1, c^i_2$ for $i=1,2, 3$. Let $f(x)=\{ c^1_1,c^2_1,c^3_1 \}$, $g(x)=\{ c^1_1,c^2_1,c^3_2 \}$, $h(x)=\{ c^1_1,c^2_2,c^3_2 \}$, 
    and $f(y)=g(y)=h(y)$ for all other points $y\neq x$.
    Provided the clustering labels remain unchanged, then $D_M(f,g)=D_M(f,h)$,
    despite the fact that $h$ differs  from $f$ on two clusters, and it 
    differs from $g$ on only one cluster. 
    However $D_M$ does has the advantages of taking into account edge information and being simple to work with from both from a theoretical and a practical perspective.
\end{remark}

Since $D_M$ generalizes $D_m$, we use $D_M$ to generalize Definition (\ref{def:Instability}) to a notion of instability of Mapper.
As before, we assume that the metric space $\mathcal{X}$ is equipped 
with a cover $\{U_i\}_{i=1}^t$. We choose an empirical quality function 
$Q^i_{n_i}$ for each $U_i$.
Denote by $\mathcal{N}^{X}_n$ the set of all Mapper functions on $X\in \mathcal{X}^n$ with cover $\{ X^i \}_{i=1}^t$
and $\mathcal{N}^{\mathcal{X}}_{n}$ the union of $\mathcal{N}^{X}_n$ for all choices of $X$.
Each $Q^i_{n_i}$ determines a corresponding 
clustering function $C^i_{n_i}$ which we use to 
define an instability  function $\mathcal{I}(\{Q^i_{n_i}\}^t_{i=1})$ 
as the composite
\begin{equation}\label{eq:InstabilityVariableMapper}
	\mathcal{I}(\{Q^i_{n_i}\}^t_{i=1}) = \mathcal{X}^n \times \mathcal{X}^n \xrightarrow{\coproduct_{i=1}^t C_{-}^i \times C^i_{-}}\mathcal{N}_n^{\mathcal{X}} \times \mathcal{N}_n^{\mathcal{X}} \xhookrightarrow{i}
	\mathcal{N}_{2n} \times \mathcal{N}_{2n} \xrightarrow{D_M} \mathbb{R}.
\end{equation}
The function $\mathcal{I}(\{Q^i_{n_i}\}^t_{i=1})$ will be measurable if and only if each $\mathcal{I}(Q^i_{n_i})$ is measurable. This is because it follows from the definitions of $D_M$ and $D_m$ that the pre-image of a measurable set under $\mathcal{I}(\{Q^i_{n_i}\}^t_{i=1})$
is a union of the the pre-images of measurable sets for functions $\mathcal{I}(Q^i_{n_i})$.

\begin{definition}\label{def:Mapper_instability}
    Fix  Mapper parameters on $\mathcal X$ by choosing  quality functions $\{ Q^i_{n_i}\}^t_{i=1}$ defined on a cover $\left\lbrace U_i\right\rbrace _{i=1}^t$ of $\mathcal X$,  and a  probability measure $P\in M_1(\mathcal{X})$. 
    These choices are made so that $\mathcal I =\mathcal{I}(\{Q^i_{n_i}\}^t_{i=1})$ is a random variable, as discussed at the end of \S  \ref{section:Clustering_on_each_term_of_Cover}. 
    
    The instability of the Mapper algorithm on size-$n$ samples is defined as
    \begin{equation*}
        \text{\rm{InStab}}_\text{\rm{Mapper}}(\{ Q_{n_i}^i \}^t_{i=1},n, P) =
        \mathbb{E}\left( \mathcal{I}(\{Q^i_{n_i}\}^t_{i=1}) \right),
    \end{equation*}
  where the expectation is taken over the probability product measures of $P$ on pairs of $n$-samples in $\mathcal{X}^n \times \mathcal{X}^n$.
\end{definition}




\section{Computing Mapper instability}\label{sec:ComputingInstability}

In this section, we present a procedure for experimentally estimating the Mapper instability given in Definition~\ref{def:Mapper_instability}.
It is important to note that there is no standard procedure to determine clustering instability, and a discussion of the subject can be found in \cite{vonLuxburg10_ClustStability_overview}.
Our approach is to generalise to the Mapper setting a method for computing clustering instability detailed in \cite{BenHur02}, which is based on sub-sampling of the data.

To begin, we assume that all necessary Mapper parameters, as explained in Remark \ref{rmk:MapperParameters}, have been selected and that we have
a sample of $n$ points taken independently and identically distributed (i.i.d) from an underlying probability distribution.
Then we may computationally estimate the Mapper instability
based on the method of $k$-fold cross validation
as follows.
\begin{enumerate}
    \item
        Split the data into $k$ sub-samples. That is, choose $m,k \in \mathbb{N}$ such that $n=km$ and remove for each $1\leq i \leq k$ the $m$ points $m(i-1)+1$ to $mi$, leaving $k$ sub-samples of $(k-1)m$ points. 
    \item
        Compute the Mapper distance between the Mapper functions of each pair of sub-samples, on the $(k-2)m$ points of their intersection.
    \item
        Average the distances between Mapper functions restricted to the  sub-samples by summing the distances and dividing by $\frac{(k+1)k}{2}$.
\end{enumerate}

The outcome of this procedure is an approximation of
the instability of the Mapper function. 

Choosing a small $k$ leads to inaccurate results since there are too few samples and the intersection between the samples is small.   
However too large a choice of $k$ may result in samples that are too similar which in turn 
decreases the speed of computation 
as many more distances need to be calculated.
Hence the best results are achieved with values of $k$ and $m$ in the middle of their range, such that $m$ is not too large.
Greater accuracy can still be gained by averaging the results of the procedure applied to several randomly shuffled copies of the dataset.
We now  explain the details of the procedure for computing the Mapper distance between the Mapper functions on two sub-samples.

Given a dataset $X$, we describe in Algorithm \ref{alg:MapperMismatch} a procedure to compute $n$ times the Mapper distance $D_M(f,g)$ between two Mapper functions $f,g\in\mathcal{N}^X$ on a cover $\left\lbrace U_i\right\rbrace_{i=1}^t$ of $X$.
We denote by
\begin{equation*}
    \{ c_i^1,\dots,c_i^{k_i} \} \text{ and } \{ s_i^1,\dots,s_i^{k_i} \}
\end{equation*}
the clusters of $f$ and $g$ in each $U_i$ respectively,
where $k_i$ is the maximum number of clusters of either $f$ or $g$ in each $U_i$.
If $k_i$ is larger than the number of clusters, then the additional clusters are assumed to be empty.
Additionally, with $l=\sum_{i=1}^t{k_i}$ let,
\begin{equation}\label{eq:OrderedClusters}
    (c_{\zeta_1}^{\eta_1},\dots,c_{\zeta_{l}}^{\eta_{l}})
\end{equation}
be a size-ordered list of clusters of $f$, that is
$|c_{\zeta_1}^{\eta_1}|\geq |c_{\zeta_2}^{\eta_2}| \geq \dots \geq |c_{\zeta_l}^{\eta_l}|$.

Algorithm \ref{alg:MapperMismatch} is a recursive backtracking procedure, which is
initialized with an upper bound, and a possible 
choice here is the  total number of points in the sample.
However, we will indicate shortly how to significantly improve this choice which will 
greatly shorten the computation time.

The mismatch between two clusters $c_i^a$ and $s_i^b$ is the symmetric difference $c_i^a \triangle s_i^b$ of the sets, consisting of  the points that are elements of one of the sets but not the other.
Algorithm \ref{alg:MapperMismatch} takes in order each cluster of $(c_{\zeta_1}^{\eta_1},\dots,c_{\zeta_{l}}^{\eta_{l}})$, and looks for the first cluster of $g$ that has not yet been matched. We compute any additional 
mismatch that arises from any new matching. 
If the total mismatch exceeds the upper bound the
algorithm backtracks and looks for a better matching.
We replace the upper bound if a better one is obtained.
Ordering the clusters in (\ref{eq:OrderedClusters}) is therefore a good idea
because obtaining a large mismatch is only possible if at least one of the clutters is large.
If we obtain a large mismatch quickly, this reduces the execution time of the algorithm by reducing the number of possibilities that need 
to be checked. 

\begin{algorithm}
    \caption{Obtains $n$ times Mapper distance $D_M(f,g)$ between Mapper functions $f$ and $g$}
    \label{alg:MapperMismatch}
    \begin{algorithmic}
        \Input
            \Desc{$(c_{\zeta_1}^{\eta_1},\dots,c_{\zeta_{l}}^{\eta_{l}})$}{Size ordered list of cluster from $f$}
            \Desc{\textit{Bound}}{Upper bound on the Mapper distance}
            \Desc{$p$ $\gets$ $1$}{Cluster position $p$ in $(c_{\zeta_1}^{\eta_1},\dots,c_{\zeta_{l}}^{\eta_{l}})$}
            \Desc{\textit{Mismatch} $\gets$ $\emptyset$}{Set of mismatched points}
            \Desc{$U_i$-\textit{matches} $\gets$ $\{ s_i^1,\dots,s_i^{k_i} \}$}{Clusters of $g_i$ not yet matched with $f_i$ clusters}
        \EndInput
        \Output
            \Desc{$nD_M(f,g)$}{$n$ times Mapper distance between $f$ and $g$}
        \EndOutput
        \Procedure{Distance}{}
            \For {each member $S$ of $U_{\zeta_p}$-\textit{matches}}
                \State \textit{Mismatch} $\gets$ \textit{Mismatch} $\cup$ ($c_{\zeta_p}^{\eta_p} \triangle S$)
                \If {$|$\textit{Mismatch}$| \; <$ \textit{Bound}}
                    \If {$p$ $=$ $l$}
                        \State \textit{Bound} $\gets$ $|$\textit{Mismatch}$|$
                    \Else {}
                        \State $U_p$-\textit{matches} $\gets$ $U_p$-\textit{matches} $-\;S$
                        \State $p$ $\gets$ $p+1$
                        \State \textit{Bound} $\gets$
                        \Call{Distance}{$(c_{\zeta_1}^{\eta_1},\dots,c_{\zeta_{l}}^{\eta_{l}})$,  \textit{Bound},  $p$, \textit{Mismatch}, $U_i$-\textit{matches}}
                    \EndIf
                \EndIf
            \EndFor
            \Return $Bound$
        \EndProcedure
    \end{algorithmic}
\end{algorithm}

A drawback of Algorithm \ref{alg:MapperMismatch} is that despite executing significantly faster than a procedure that considers all cluster permutations, computation time can still be slow.
The main reason for this is that if the initial upper bound is large, improved bounds may only be obtained in small increments, requiring most permutations to be checked.

A very good upper estimate for the Mapper distance can be obtained by finding the permutations corresponding to the minimal matching distances $D_m(f_i,g_i)$ within each clustering of $U_i$.
Then finding the size of the set of mismatched points across the Mapper functions corresponds to the permutation obtained by combining the optimal permutations in each $U_i$.

In practice,  this upper bound can be obtained by performing Algorithm \ref{alg:MapperMismatch} restricted to each clustering on $U_i$ and
returning the  minimal $Mismatch$ in addition to the corresponding $Bound$.  
An upper bound is then given by the size of the union of the mismatches from each $U_i$.
Alternatively the optimal permutation within each $U_i$ could be obtained using the Hungarian algorithm.


\section{Initial experimental results}\label{sec:InitialResults}

In this section we demonstrate how the procedure detailed in the previous section
might be used to determine good Mapper outputs over varying parameter selections. 
Mapper is a standard tool from topological data analysis and there are several available implementations \cite{PythonMapper,TDAmapper,KeplerMapper}.
We obtained our results using the Kepler Mapper  \cite{KeplerMapper}.

{\centering
\begin{table}[ht!]
	\begin{tabular}{cc}
		\begin{subfigure}{0.48\textwidth}\centering\includegraphics[width=1\columnwidth]{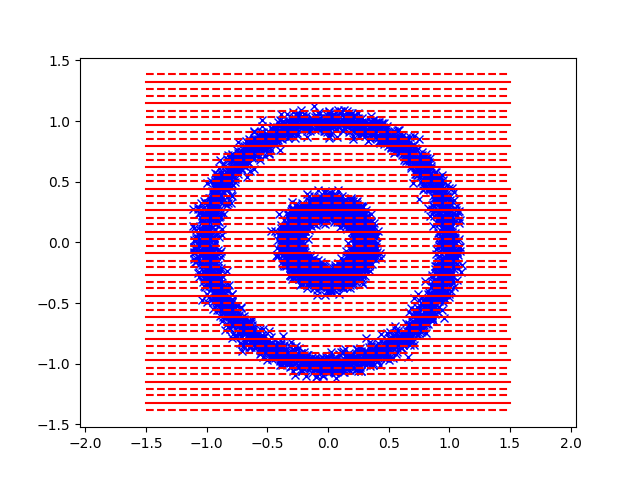}
		\end{subfigure}&
		\begin{subfigure}{0.48\textwidth}\centering\includegraphics[width=1\columnwidth]{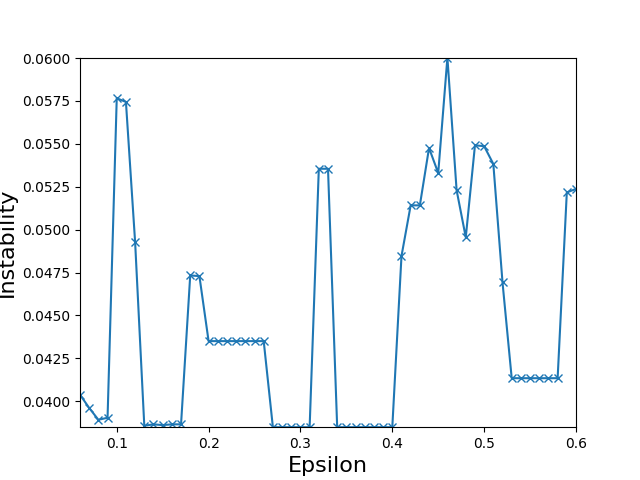}
		\end{subfigure}\\
	\end{tabular}
	\begin{tabular}{cccc}
	        \begin{subfigure}{0.18\textwidth}\centering\includegraphics[width=1\columnwidth]{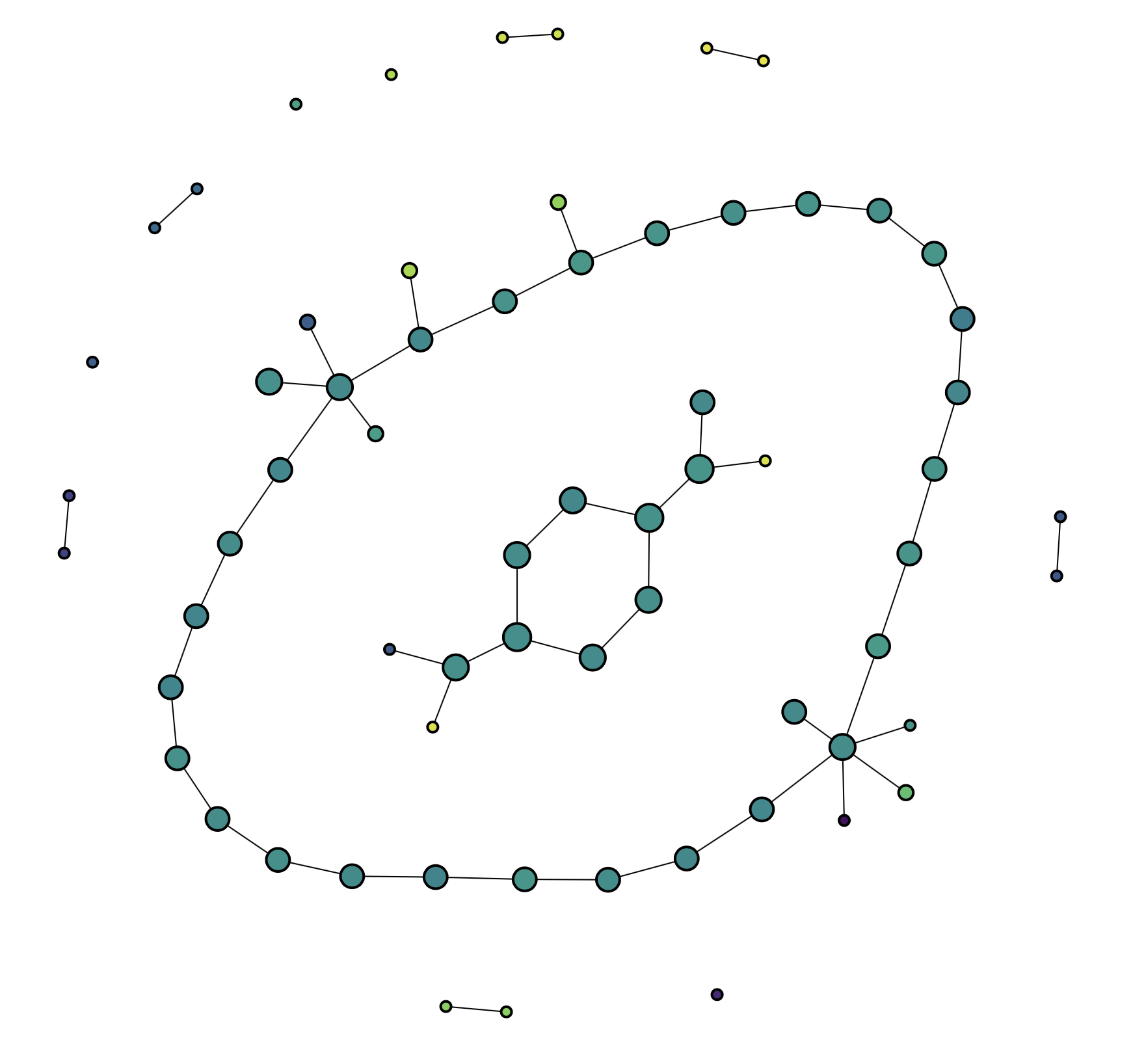}\caption*{Epsilon 0.06}
			\end{subfigure}&
			\begin{subfigure}{0.18\textwidth}\centering\includegraphics[width=1\columnwidth]{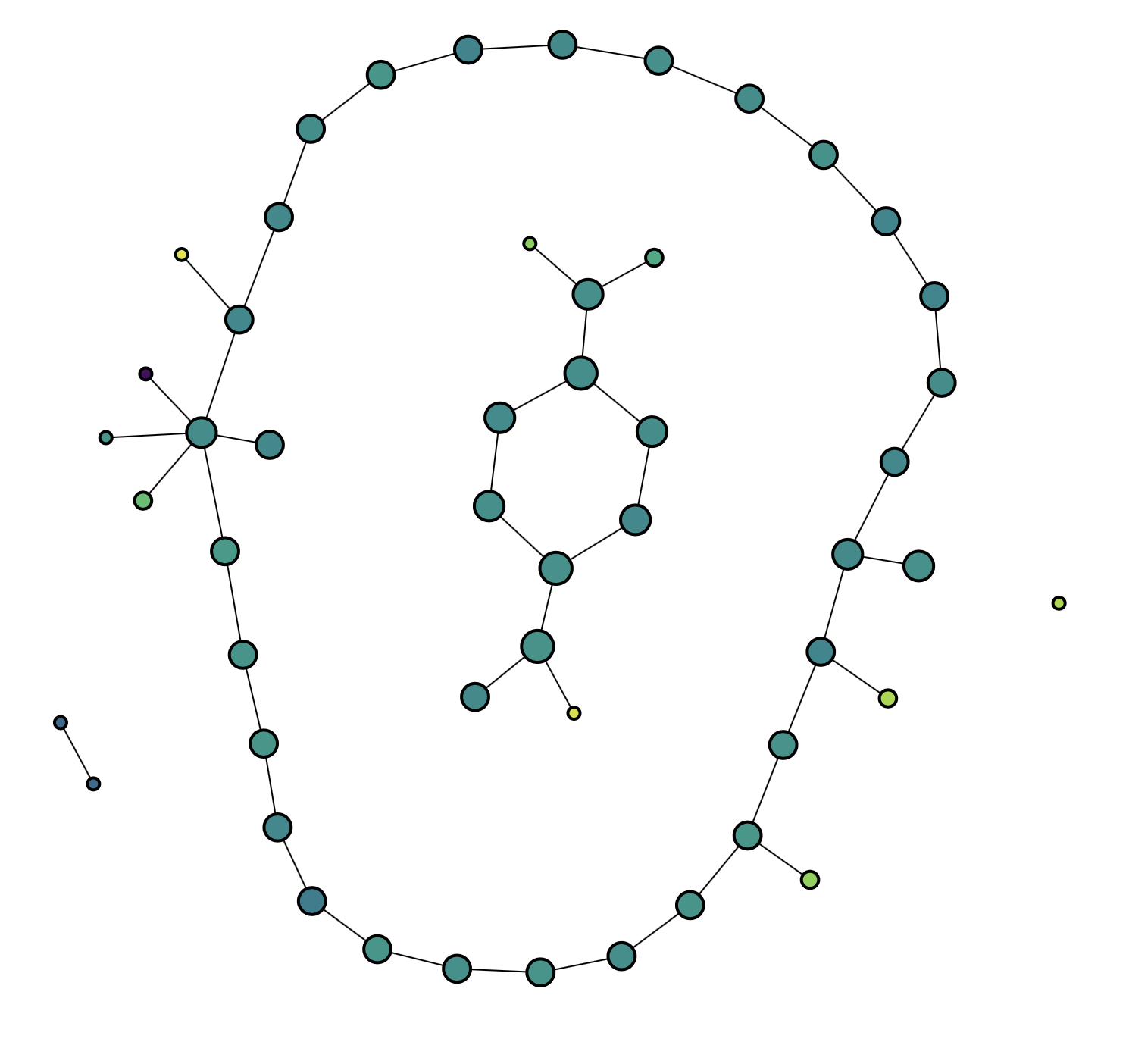}\caption*{Epsilon 0.07}
			\end{subfigure}&
			\begin{subfigure}{0.18\textwidth}\centering\includegraphics[width=1\columnwidth]{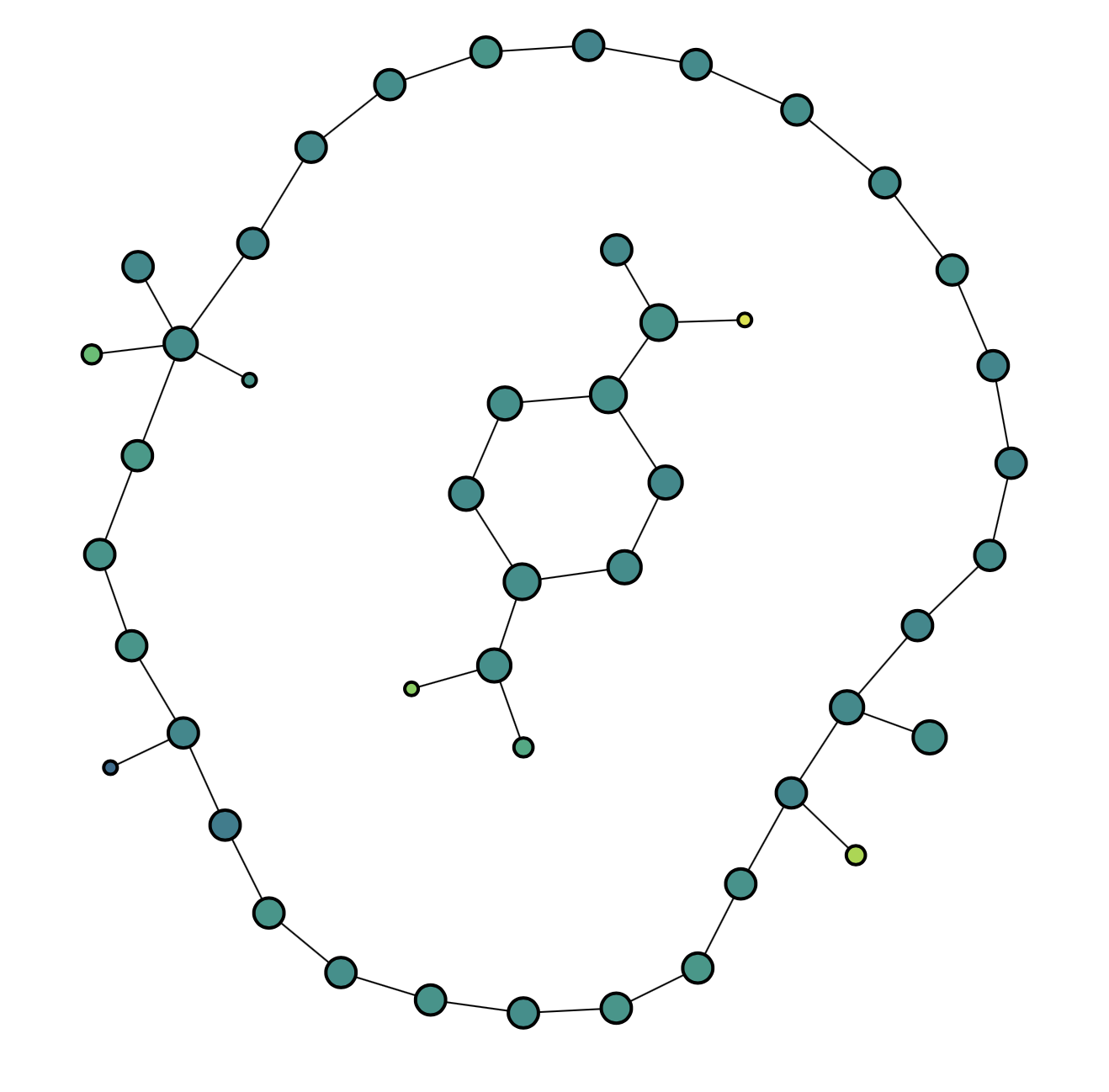}\caption*{Epsilon 0.09}
			\end{subfigure}&
			\begin{subfigure}{0.18\textwidth}\centering\includegraphics[width=1\columnwidth]{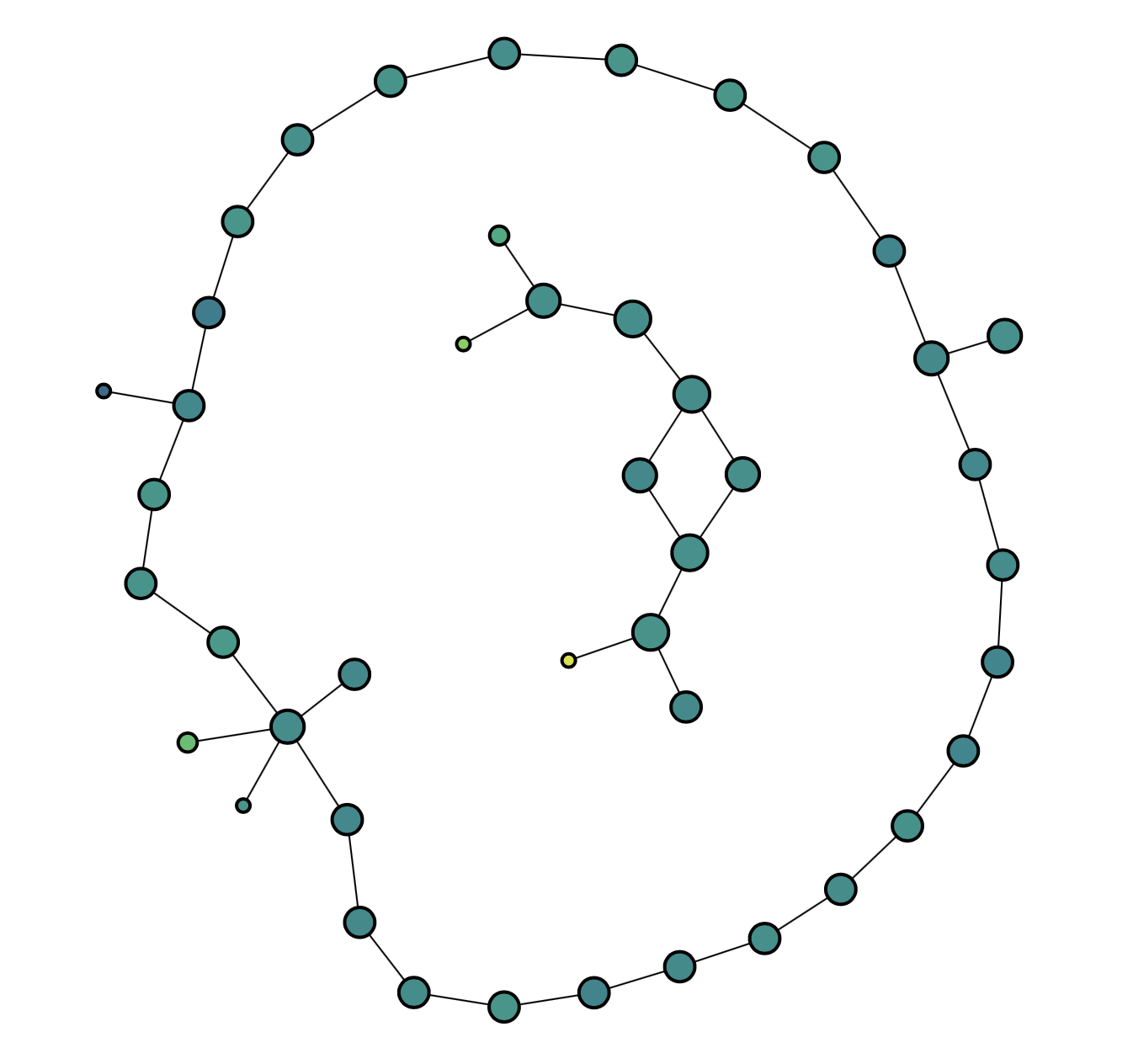}\caption*{Epsilon 0.1}
			\end{subfigure}\\
			\newline
			\begin{subfigure}{0.18\textwidth}\centering\includegraphics[width=0.8\columnwidth]{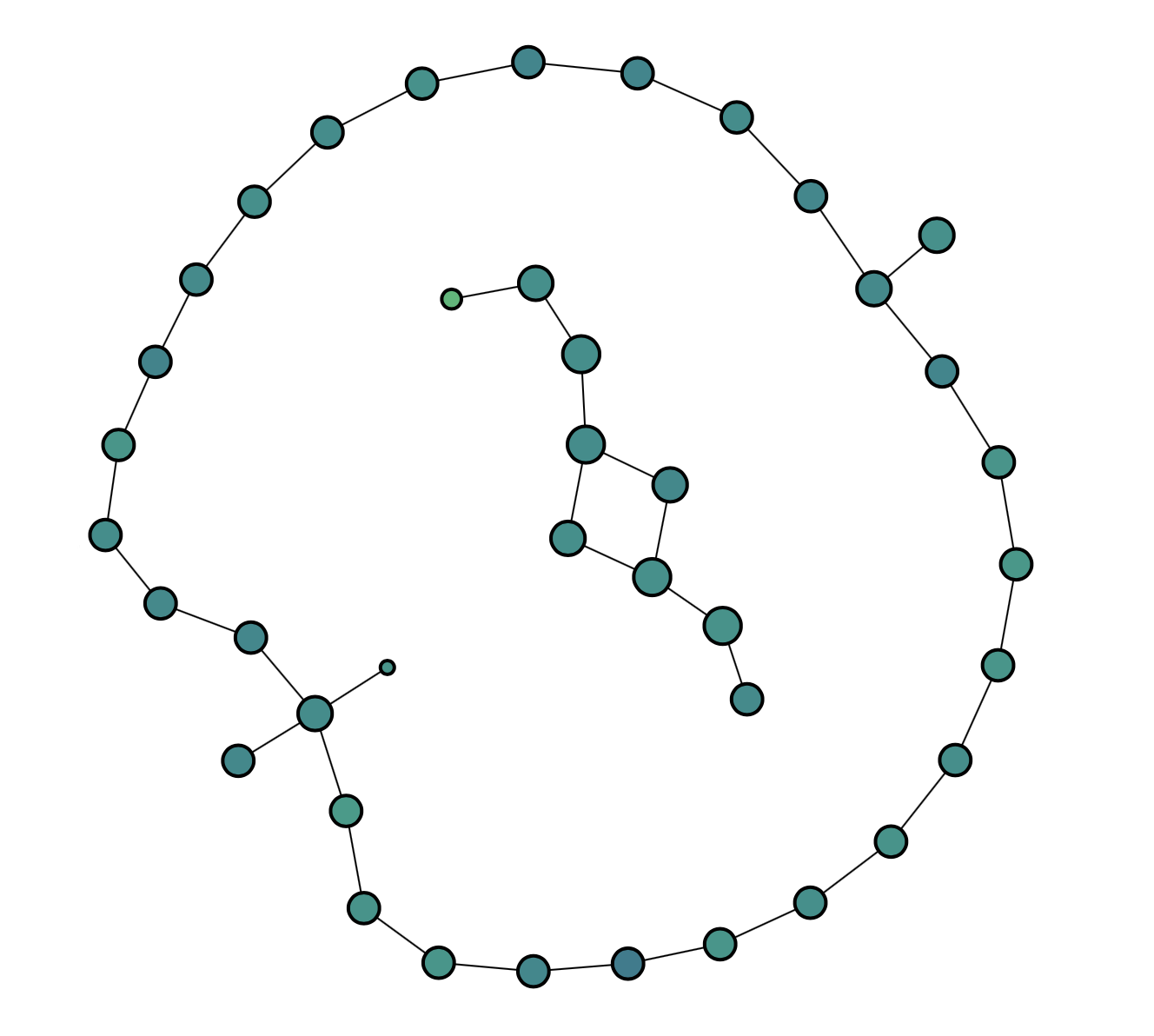}\caption*{Epsilon 0.15}
			\end{subfigure}&
			\begin{subfigure}{0.18\textwidth}\centering\includegraphics[width=0.8\columnwidth]{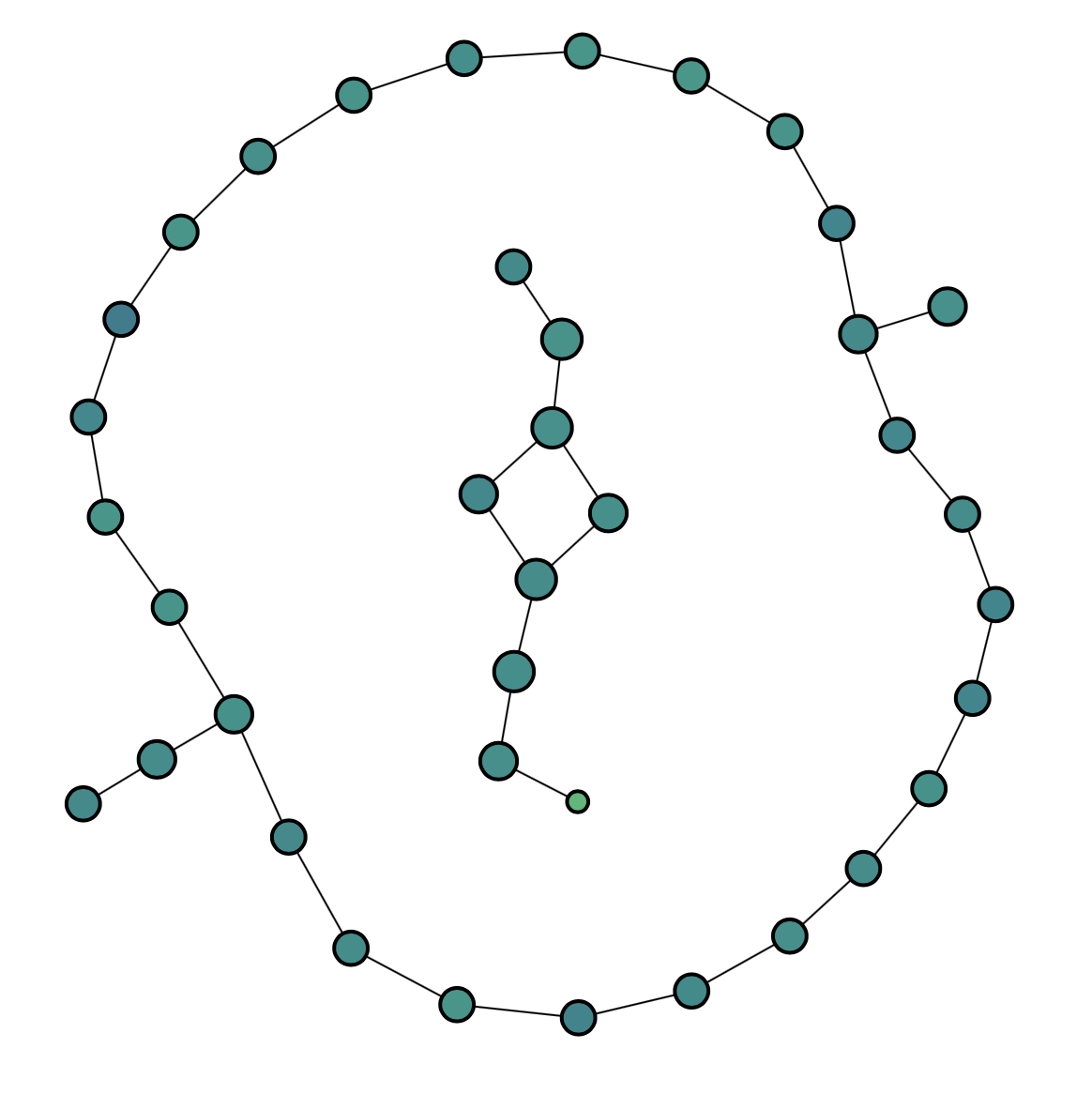}\caption*{Epsilon 0.2}
			\end{subfigure}&
			\begin{subfigure}{0.18\textwidth}\centering\includegraphics[width=0.8\columnwidth]{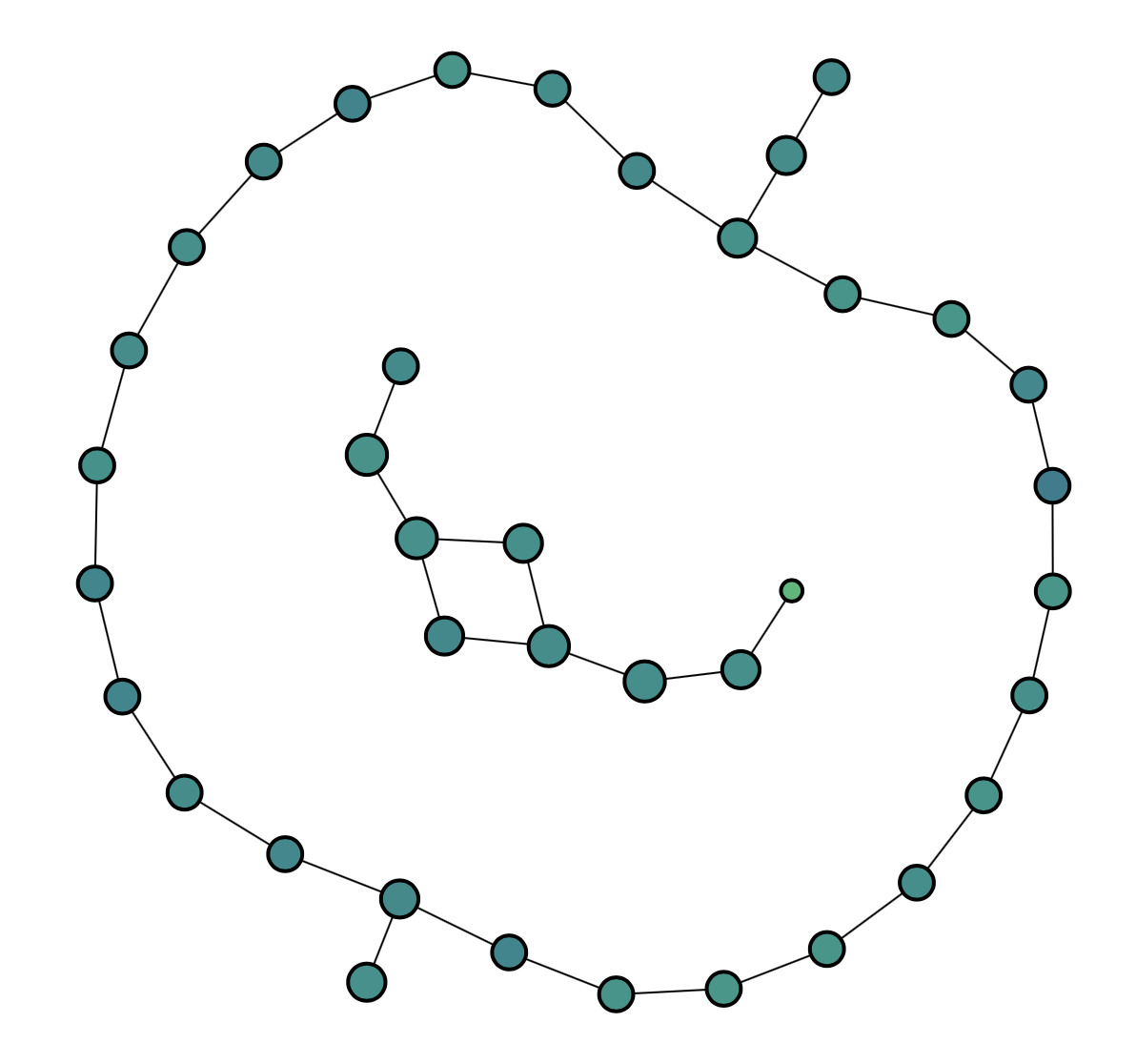}\caption*{Epsilon 0.3}
			\end{subfigure}&
			\begin{subfigure}{0.18\textwidth}\centering\includegraphics[width=0.8\columnwidth]{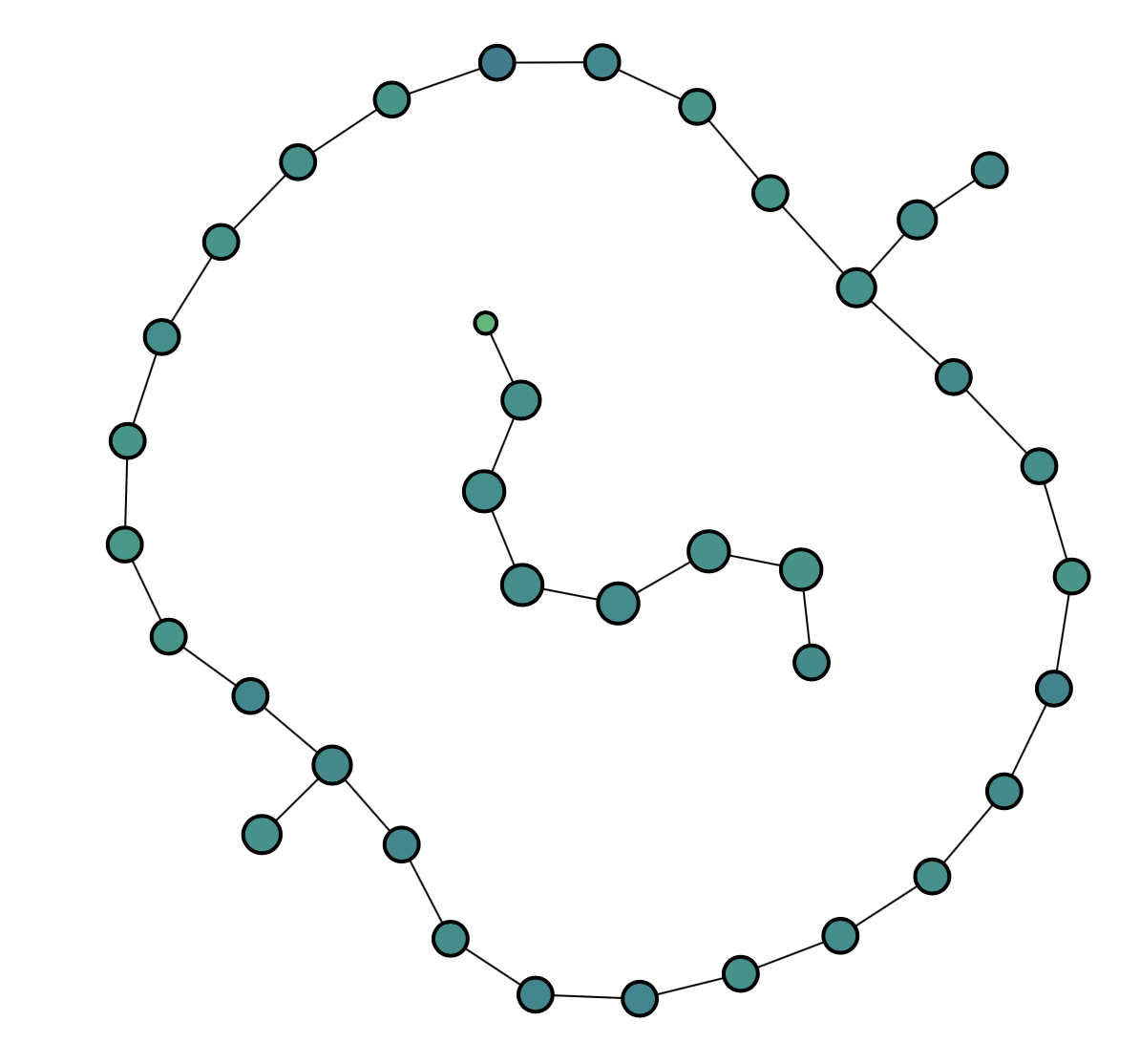}\caption*{Epsilon 0.35}
			\end{subfigure}\\
			\newline
			\begin{subfigure}{0.18\textwidth}\centering\includegraphics[width=0.7\columnwidth]{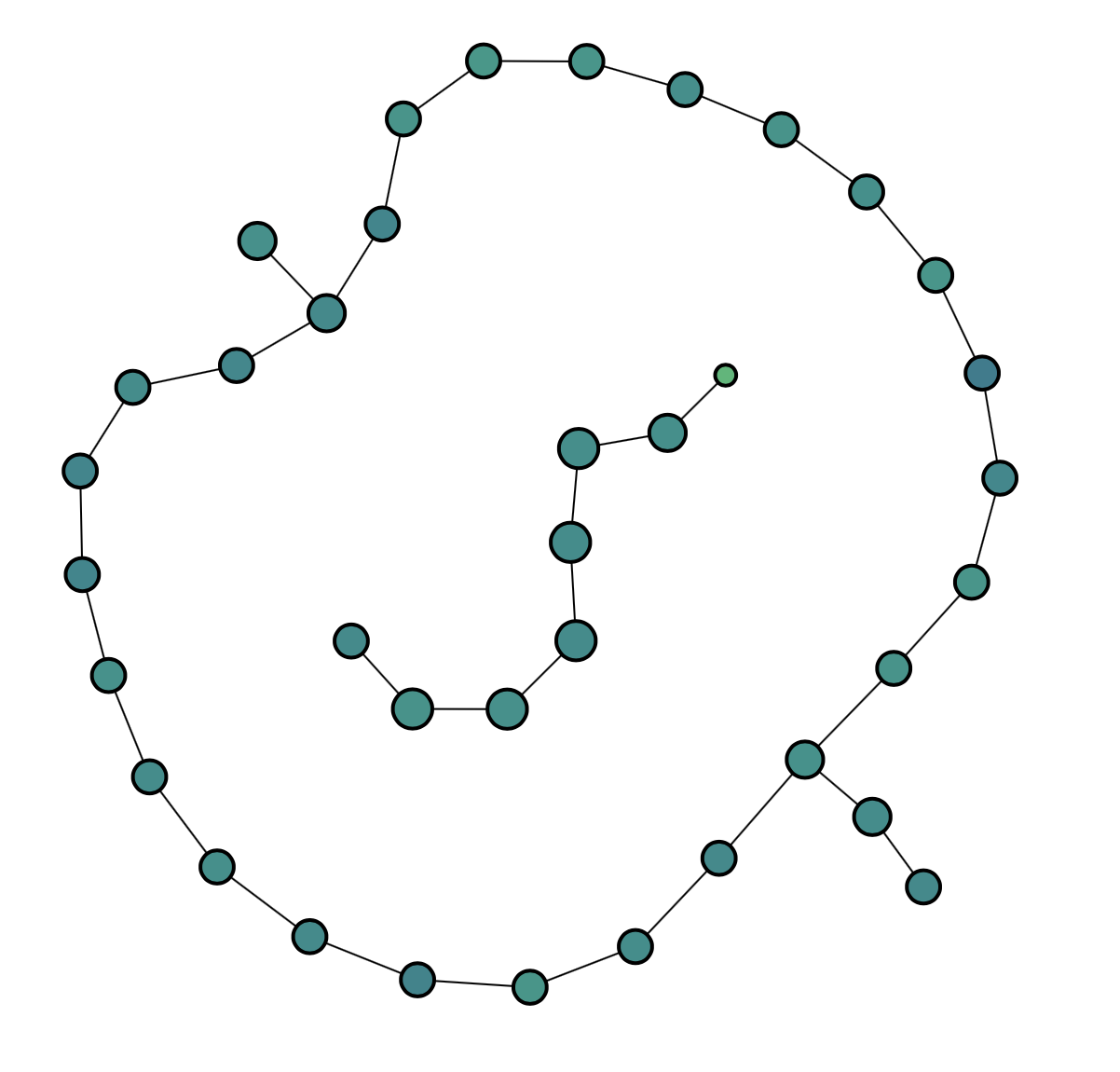}\caption*{Epsilon 0.4}
			\end{subfigure}&
			\begin{subfigure}{0.18\textwidth}\centering\includegraphics[width=0.7\columnwidth]{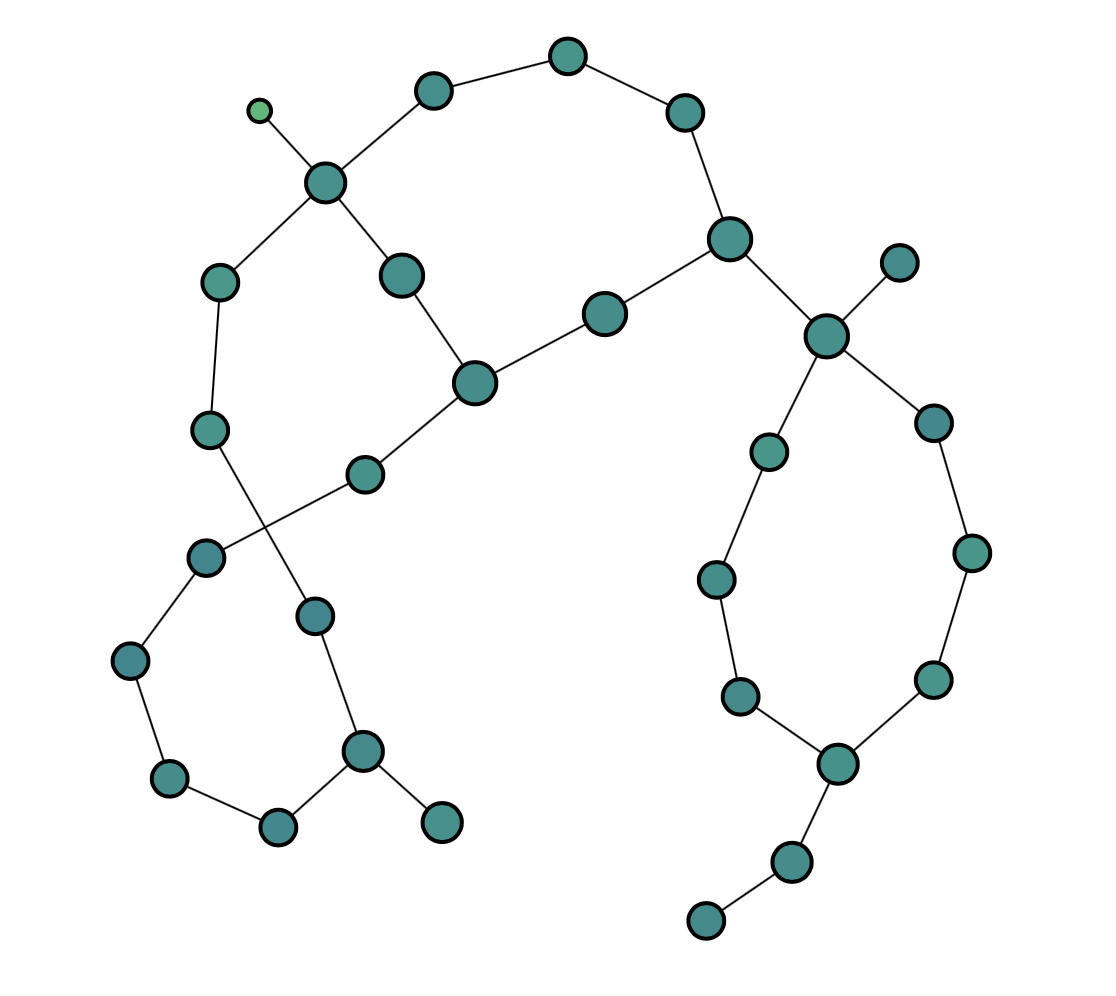}\caption*{Epsilon 0.45}
			\end{subfigure}&
			\begin{subfigure}{0.18\textwidth}\centering\includegraphics[width=0.7\columnwidth]{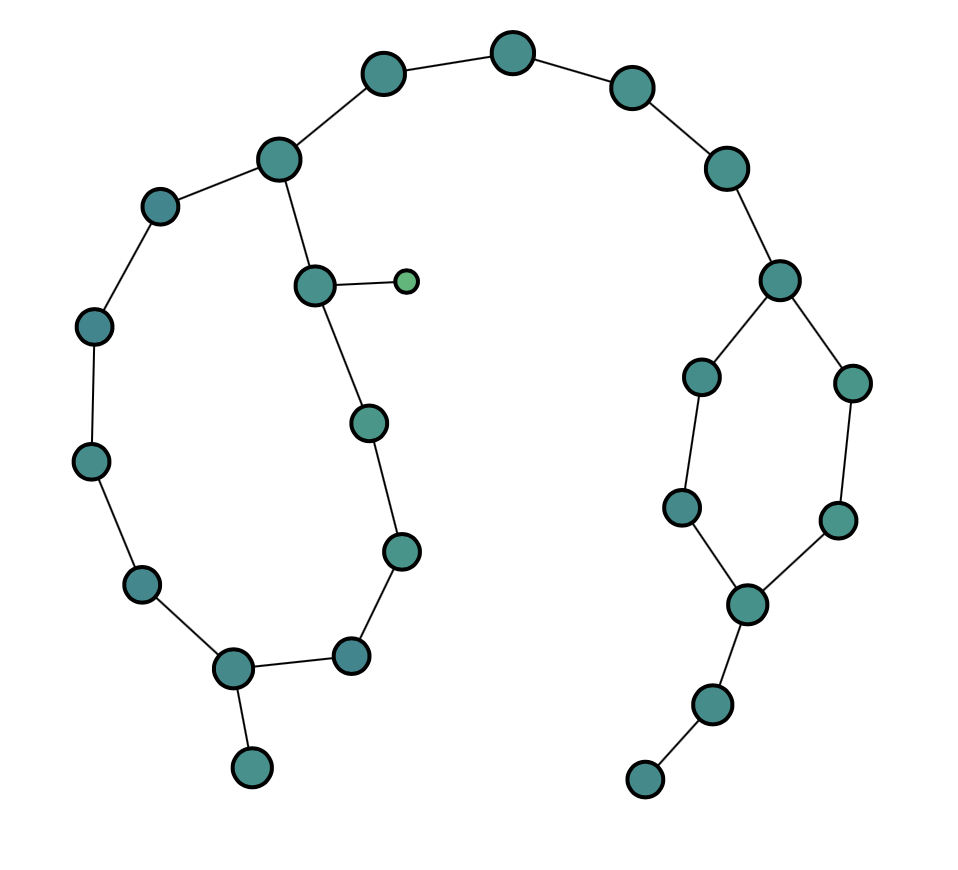}\caption*{Epsilon 0.5}
			\end{subfigure}&
			\begin{subfigure}{0.18\textwidth}\centering\includegraphics[width=0.7\columnwidth]{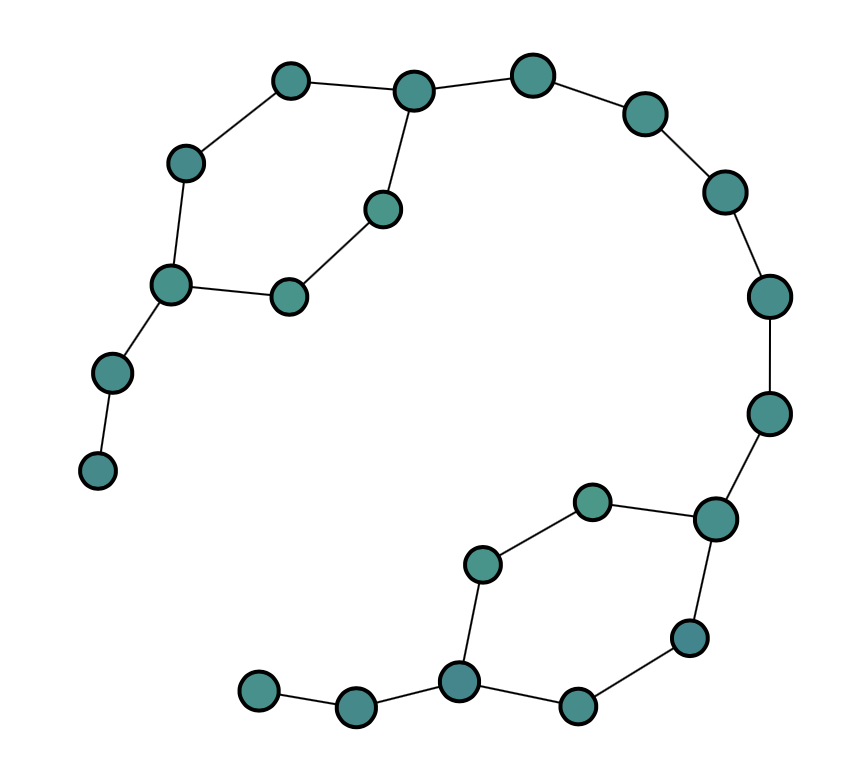}\caption*{Epsilon 0.6}
			\end{subfigure}\\
		\end{tabular}
	\caption{The panel in the top left shows a dataset of $5000$ points sampled with noise from two concentric circles. The dashed red lines denote the boundaries of the overlapping bins and the solid lines are the centres of the overlap.  Displayed below are the Mapper graphs corresponding to the  increasing values of $\epsilon$, the parameter guiding the $\epsilon$-neigbourhood clustering used to construct the Mapper outputs. The plot in the top right shows the instability as a function of $\epsilon$. The bin overlap was $35$ percent, with $17$ bins and the instabilities were averaged over $30$ different sub-samples, with $10$ sub-samples in each case. See \S \ref{sec:ComputingInstability} for details of the procedure.
	}
	\label{fig:ChangingEpsilon}
\end{table}
}

Table \ref{fig:ChangingEpsilon}  considers a dataset with two noisy concentric circles. We produce a family of Mapper
graphs using the $\epsilon$-neighbourhood clustering with varying values of epsilon. The specific clustering procedure used was DBSCAN
from the sklearn python package.
For the values of epsilon of $0.06$ and $0.09$, the instability decreases due to the disappearance of noise represented by spurious small connected components in the Mapper graph.
The major part of the structure of the Mapper graph remains the same, revealing both the inner and outer circle.
Above the epsilon values of $0.1$, there is a spike in the instability value corresponding to a loss of detail in the inner circle within the Mapper graph.
A similar spike occurs around the $0.32$ value of epsilon, corresponding to the loss of the inner circle from the Mapper graph.
The final large increase in instability occurs around the $0.45$ value of epsilon, and it corresponds to the gradual merging of the two circles in the Mapper graph.

\begin{figure}[htp!]
    \centering
    \def\svgwidth{1.\textwidth}
    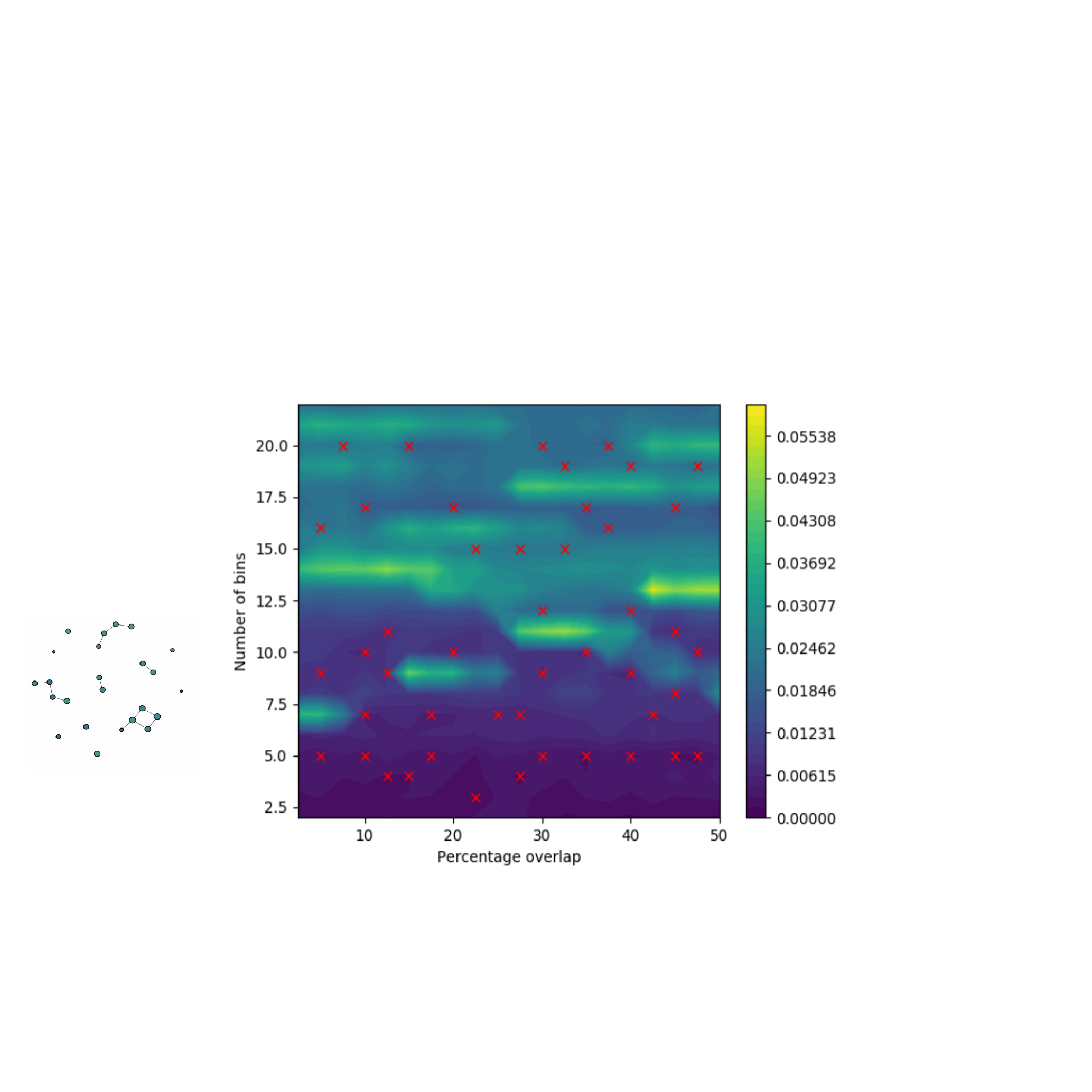
    \caption{
    We consider $1000$ points sampled with noise from two concentric circles. The centre of the figure shows a contour plot of instability values varying over the percentage overlap (gain) of the bins and the number of bins (resolution). 
    The red crosses  correspond to the local minima. The numbers next to the vertical bar on the right are values of instability. 
    Surrounding the plot are the Mapper graphs corresponding the various local minima.
    Below each Mapper graph is the corresponding instability value.
    The bin overlap was between $2.5\%$ and $50\%$ at $2.5\%$ interval steps.
    The number of bins varied  between $2$ to $22$. The instabilities were averaged over $10$ runs where we selected $10$ random sub-samples in each case. See \S \ref{sec:ComputingInstability} for details of the procedure.
    }
    \label{fig:ResolutionVsGain}
\end{figure}

We now pass to experiments that explore the dependence of 
the Mapper graph on the values of resolution and gain. 
Figure \ref{fig:ResolutionVsGain} presents a contour plot of the instability of Mapper on another dataset  consisting of noisy concentric circles created by  varying  the percentage overlap between bins (gain) and the number of bins (resolution).

Similarly to the discussion on Table \ref{fig:ChangingEpsilon}, it is possible to identify a number of global features within the plot with structural changes in the Mapper graph.

Running between bin numbers of $7$ and $13$, there is a diagonal of high peaks in instability.
Restricting to odd number of bins, this range of peaks appears to correspond to the emergence of the inner circle within the Mapper graph. 
 All graphs below the first distinct diagonal show the inner circle as a cluster without a cycle. Mapper graphs for odd bin numbers above the diagonal contain the structure of the inner circle. 

Along the horizontal value of $14$ bins, there is a relative rise in instability.
This appears to correspond to the fact that if we use an
even number of bins the correct structure of the inner circle is revealed.

The region determined by bin numbers from  $8$ to $12$ and percentage overlaps from $25$ to $50$ is a negatively sloped diagonal of relatively high instability.
  This appears to correspond to the emergence in the Mapper graph of a new 
  relatively large  cluster attached to the structure of the outer circle forming a flare corresponding to either a number of points at the top or at the bottom of the outer circle.

Running between bin numbers $14$ and $20$ is another diagonal range in peaks of instability. These peaks seem to appear when restricted to even numbers of bins and correspond to the emergence of a better defined structure of the inner circle within the Mapper graphs.

Finally, the high instability in the top left hand corner of the contour plot appears to capture the moment when the part of the Mapper graph corresponding to outer circle
breaks up.

We conclude that to infer the reliability of the Mapper graph the Mapper instability should be considered over the whole parameter space. While it is intuitively clear that 
a more complicated Mapper output often gives a more unstable result we show that 
 jumps in instability appear to correspond well with the structural changes in the Mapper output.
A jump in complexity accompanied by a relatively low jump in instability, suggests that the additional structure is indeed present in the data,
providing a method to determine the reliability of features present within relatively stable regions in the parameter space.




\section{Mapper boundary distance}\label{sec:BounderyDistance}

To compare clustering functions on a metric space $(U,D)$, Ben-David and von Luxburg \cite{BenDavid_vonLuxburg08} introduced a distance function that captures the size of the regions of $U$ on which two clustering functions disagree.
We now expand upon and generalise this boundary distance to the Mapper setting
and use it to provide upper bounds of the Mapper instability in the following section.

\begin{definition}\label{def:Boundary}
	Let $(\mathcal{X},D)$ be a metric space with cover $\lbrace U_i \rbrace^t_{i=1}$.
	Then given a Mapper function $f \in \mathcal{N}$, define the boundary of each $f_i$ to be
	\begin{equation*}\label{eq:setOfDiscontinuities}
		\partial(f_i) =
		\partial(f^{-1}_i(c^i_1))\cup\cdots\cup\partial(f^{-1}_i(c^i_s)) \cup\partial U_i,
	\end{equation*}
	where each $\partial(f^{-1}_i(c^i_j))$ is the usual topological boundary of the subset
	$f^{-1}_i(c^i_j)$ taken over $U_i$ and $\partial U_i$ the boundary of $U_i$ in $\mathcal{X}$.
	Following the established conventions, we will refer to $\partial(f_i)$ as the \emph{decision boundary} of $f_i$. 
\end{definition}

Intuitively, $\partial(f_i)$ consists of the points of discontinuity of $f_i$, that is, the points lying in the boundary of some cluster, and an illustration of $\partial(f_i)$ is provided in Figure \ref{fig:D_boundary_a}. As $U_i$ is a metric space, 
$\partial(f_i)$ can be described using an equivalent metric condition, which defines the boundary $\partial A$
of any subset $A\subseteq U_i$ by
\begin{equation}\label{eq:MetricBoundaryCondition}
    \partial A = \{ x \in \bar{U}_i \mid  D(x, A) = D(x,A^c) =0\}, 
\end{equation}
where $\bar{U}_i$ is the closure of $U_i$ in $\mathcal{X}$ and $A^c = 
\mathcal{X}\setminus A$ is the complement of $A$ in the 
metric space $\mathcal{X}$ and
the distance of a point $x \in \bar{U}_i$ from a set $A\subseteq U_i$ is defined 
as usual by 
\begin{equation*}\label{eq:DistenceToBoundary}
    D(x, A) = \inf \left\lbrace  D(x, y) \;\middle|\;y \in A \right\rbrace.
\end{equation*}

\begin{remark}\label{rmk:ProperConnectedCover}
    If $t=1$ and $U_1=\mathcal{X}$, then $U_1=\bar{U}_1$ and we recover 
	the notion of boundary for clustering seen in \cite{BenDavid_vonLuxburg08}.
	In the case when any $U_i=\mathcal{X}$ and $f_i$ is the constant function,
	that is there is a single cluster in $U_i$,
	then 
	\begin{equation*}
	    \partial f_i = \emptyset.
	\end{equation*}
	If $U_i$ is connected this is the only way to obtain $\partial f_i = \emptyset$.
	For clustering it is not of particular interest to study data with a single cluster and so this does not cause many problems.
	Since no known Mapper constructions have disconnected $U_i$ or some $U_i=\mathcal{X}$ and neither exception seems practically reasonable,
	from now on unless stated otherwise, we assume that each $U_i$ is connected with no $U_i = \mathcal{X}$.
\end{remark}

To avoid unnecessary technicalities, two clusterings will be considered different if and only if their values differ outside the intersection of their boundaries.
Hence, we work on the set of all clusterings $f_i \in \mathcal F^i$ that represent 
elements in the space of equivalence classes 
\begin{equation*}\label{eq:BoundarylesClustering}
	\mathcal{F}_\partial^i
	= \mathcal F^i / \sim,
\end{equation*}
where $f_i \sim g_i$ if and only if
\begin{equation}\label{eq:different_on_the_boundary}
    \exists \pi: f_i(x) = \pi g_i(x), \forall x\in U_i - \partial(g_i), \hspace{25pt}\text{\rm{and}}
\end{equation}
\begin{equation*}
    \exists \pi': g(x) = \pi' f_i(x), \forall x\in U_i - \partial(f_i),
\end{equation*}
where $\pi, \pi'\in \Sigma_s$ are permutations of the set of labels.

\begin{definition}\label{def:MapperBoundary}
Let $f$ be a Mapper function constructed using clustering functions $f_i$ of the sets  $U_i$.
Then the decision boundary $\partial(f)$ of the Mapper function $f$ can 
is defined using  the decision boundaries of the functions $f_i$ by 
\begin{equation*}\label{eq:BoundaryMapper}
    \partial(f) = \bigcup_{i=1}^t \partial(f_i).
\end{equation*}
We denote by $\mathcal{N_\partial}$ the set of Mapper functions $f\in \mathcal{N}$ such that each $f_i$ is an element of
$\mathcal{F}_\partial^i$.
\end{definition}

For any $\gamma > 0$, we define the $\gamma$-tube of $f_i$ to be
\begin{equation*}\label{eq:TunbularNhood}
    T_\gamma(f_i) = \left\lbrace  x \in U_i \;\middle|\; D \left(x, \partial(f_i) \right) \leq \gamma \right\rbrace.
\end{equation*}
For $\gamma = 0$, we set $T_0(f_i) = \partial(f_i).$
Figure \ref{fig:D_boundary} illustrates the construction $T_\gamma(f_i)$.
If two clusterings $f_i, g_i \in \mathcal F_\partial^i$ agree outside the $\gamma$-tube of $f_i$, we will write $g_i \triangleleft T_\gamma(f_i)$. Thus the condition $g_i \triangleleft T_\gamma(f_i)$ holds if and only if
for all $x,y$ in the complement $U_i - T_\gamma(f_i)$ of the $\gamma$-tube of $f_i$ we have that 
\begin{equation*}
    f_i(x) = f_i(y) \Leftrightarrow g_i(x) = g_i(y).
\end{equation*}

\begin{remark}\label{rmk:connectedness}
The assumption of $U_i$ being connected is not a serious restriction, as the support of $P$ need not be connected.
One can relax this condition, and define $T_\gamma(f_i)$ directly as 
\[
 T_\gamma(f_i) = \left\lbrace  x\in U_i \,|\, \exists y\in {U_i} \colon d(x, y) \leq \gamma \text{ and } 
f_i(x) \neq f_i(y) \right\rbrace. 
\]
This, however, raises other technical issues that need to be treated with care, as hinted at in \cite{BenDavid_vonLuxburg08}.
\end{remark}

\begin{definition}\label{def:MapperGammaTube}
We define the $\gamma$-tube around a Mapper function $f\in \mathcal{N}$ to be
\begin{equation*}
    T_\gamma(f) = \left\lbrace  x \in \mathcal{X} \,|\, d \left(x, \partial(f) \right) \leq \gamma \right\rbrace, 
\end{equation*}
where 
\begin{equation*}
    d(x, \partial(f)) = \min_{i=1,\dots,t} d(x, \partial(f_i)),
\end{equation*}
and where the minimum in the last formula is taken over the indices $i$ such that $x\in U_i$. 
\end{definition}

Note that $T_\gamma(f)$ can be seen as the union of the individual tubes $T_\gamma(f_i)$:
\begin{equation}\label{eq:tube_as_union}
    T_\gamma(f) =  \bigcup_{i=1}^t T_\gamma(f_i).
\end{equation}
Consequently, the mass $P \left( T_\gamma(f) \right)$ of the 
$\gamma$-tube $T_\gamma(f)$ with respect to the probability measure $P$ depends on the overlap between the bins $U_i$.
We have the following natural estimate. 

\begin{proposition}\label{prop:boundsForMassfFTubes} 
   The mass $P \left( T_\gamma(f) \right)$ of the tube $T_\gamma(f)$ is bounded by the mass of the $\gamma$ tubes $T_\gamma(f_i)$ as follows: 
    \begin{equation*}
        \max_{i=1,\dots,t} P \left( T_\gamma(f_i) \right) \leq P \left( T_\gamma(f) \right) \leq \sum_{i=1}^t P \left( T_\gamma(f_i) \right).
    \end{equation*}
    
    The inequality on the left becomes an equality when all the elements $U_i$ are contained in 
    one of these sets, provided on that $U_j$ the boundary $\partial(f_j)$ is nonempty.
    The inequality on the right becomes an equality when the bins $U_i$ are all 
    disjoint. 
\end{proposition}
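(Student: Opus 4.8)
The plan is to prove the two inequalities using the set-theoretic identity from equation (\ref{eq:tube_as_union}), namely $T_\gamma(f) = \bigcup_{i=1}^t T_\gamma(f_i)$, together with monotonicity and subadditivity of the probability measure $P$. Both inequalities are direct consequences of this union decomposition, so the bulk of the work is really in characterising the equality cases.

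For the upper bound, I would simply apply subadditivity (countable, hence finite, union bound) of $P$ to the decomposition (\ref{eq:tube_as_union}): since $P\left(\bigcup_{i=1}^t T_\gamma(f_i)\right) \leq \sum_{i=1}^t P\left(T_\gamma(f_i)\right)$, the right-hand inequality follows immediately. For the lower bound, I would observe that for each fixed $j$ we have $T_\gamma(f_j) \subseteq \bigcup_{i=1}^t T_\gamma(f_i) = T_\gamma(f)$, so monotonicity of $P$ gives $P\left(T_\gamma(f_j)\right) \leq P\left(T_\gamma(f)\right)$; taking the maximum over $j$ yields the left-hand inequality. These two steps are routine and should be dispatched quickly.

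The more delicate part, which I expect to be the main obstacle, is verifying the stated equality conditions. For the right-hand equality under the disjointness hypothesis, I would note that if the bins $U_i$ are disjoint then the tubes $T_\gamma(f_i) \subseteq U_i$ are also pairwise disjoint, so finite additivity of $P$ upgrades subadditivity to equality. For the left-hand equality, the hypothesis as phrased (``all the elements $U_i$ are contained in one of these sets'') must be read as: there is a single index $j$ such that $T_\gamma(f_i) \subseteq T_\gamma(f_j)$ for every $i$, so that the union collapses to $T_\gamma(f_j)$ and hence $P\left(T_\gamma(f)\right) = P\left(T_\gamma(f_j)\right) = \max_i P\left(T_\gamma(f_i)\right)$. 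The nonemptiness caveat on $\partial(f_j)$ is there to ensure $T_\gamma(f_j)$ is genuinely the dominating tube and that the maximum is actually attained at $j$; I would make this precise by checking that a nonempty decision boundary guarantees $T_\gamma(f_j)$ has at least the mass of every other tube it contains. The subtlety here is purely in stating the hypotheses cleanly and matching the somewhat informal wording of the proposition to a rigorous containment condition, rather than in any substantive measure-theoretic argument.
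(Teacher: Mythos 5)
Your proposal is correct and follows essentially the same route as the paper's own proof: both use the decomposition $T_\gamma(f) = \bigcup_{i=1}^t T_\gamma(f_i)$ together with monotonicity and subadditivity of $P$ for the two inequalities, and both realise the equality cases via tube containment (for the lower bound) and pairwise disjointness of the tubes (for the upper bound). Your added remarks on reading the containment hypothesis as $T_\gamma(f_i) \subseteq T_\gamma(f_j)$ for a single $j$ and on disjoint bins forcing disjoint tubes match what the paper does implicitly.
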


\begin{proof}
    Since $T_\gamma(f) = \bigcup_{i=1}^t T_\gamma(f_i),$ we have 
    $P \left( T_\gamma(f_i) \right) \leq P \left( T_\gamma(f) \right)$ for all $i=1,\dots,t$ and hence,
    $\max_{1\leq i \leq m} P \left( T_\gamma(f_i) \right) \leq P \left( T_\gamma(f) \right)$, proving the inequality on the left. The other inequality follows 
    in a similar way. 
   
    Turning to the second part of the Proposition, if 
    there is some $1\leq i_0 \leq t$ such that $T_\gamma(f_j) \subseteq T_\gamma(f_{i_0})$ for all $1\leq j \leq t$, then $T_\gamma(f) = T_\gamma(f_{i_0})$. Hence
    $P \left( T_\gamma(f) \right) = P \left( T_\gamma(f^{i_0}) \right) = \max_{1 \leq i \leq t} P \left( T_\gamma(f_i) \right)$, realizing the lower bound.
    
    Similarly, if $T_\gamma(f_i) \cap T_\gamma(f_j) = \emptyset$ for all $i\neq j$, then $P \left( T_\gamma(f) \right) = \sum_{i=1}^t P \left( T_\gamma(f_i) \right)$, realizing the upper bound.
\end{proof}
\begin{definition}\label{def:Triangle}
Given Mapper functions $f, g \in \mathcal{N}_\partial$, we say that $g$ is contained in the $\gamma$-tube of $f$, written $g \triangleleft T_\gamma(f)$, if for 
all $x,y$ in the complement of $T_\gamma(f)$
\begin{equation*}
    f(x) = f(y) \Leftrightarrow g(x) = g(y).
 \end{equation*}
 \end{definition}
 
It is clear that this statement is equivalent to saying that for 
all $i=1,\dots,t$,
\begin{equation*}\label{eq:g_triangle_tube_forall_i}
    g \triangleleft T_\gamma(f) \Longleftrightarrow g_i \triangleleft T_\gamma(f_i).
\end{equation*}

\begin{figure}[ht!]
    \centering
    \begin{subfigure}[b]{0.3\textwidth}
    \centering
\def\svgwidth{1.\textwidth}
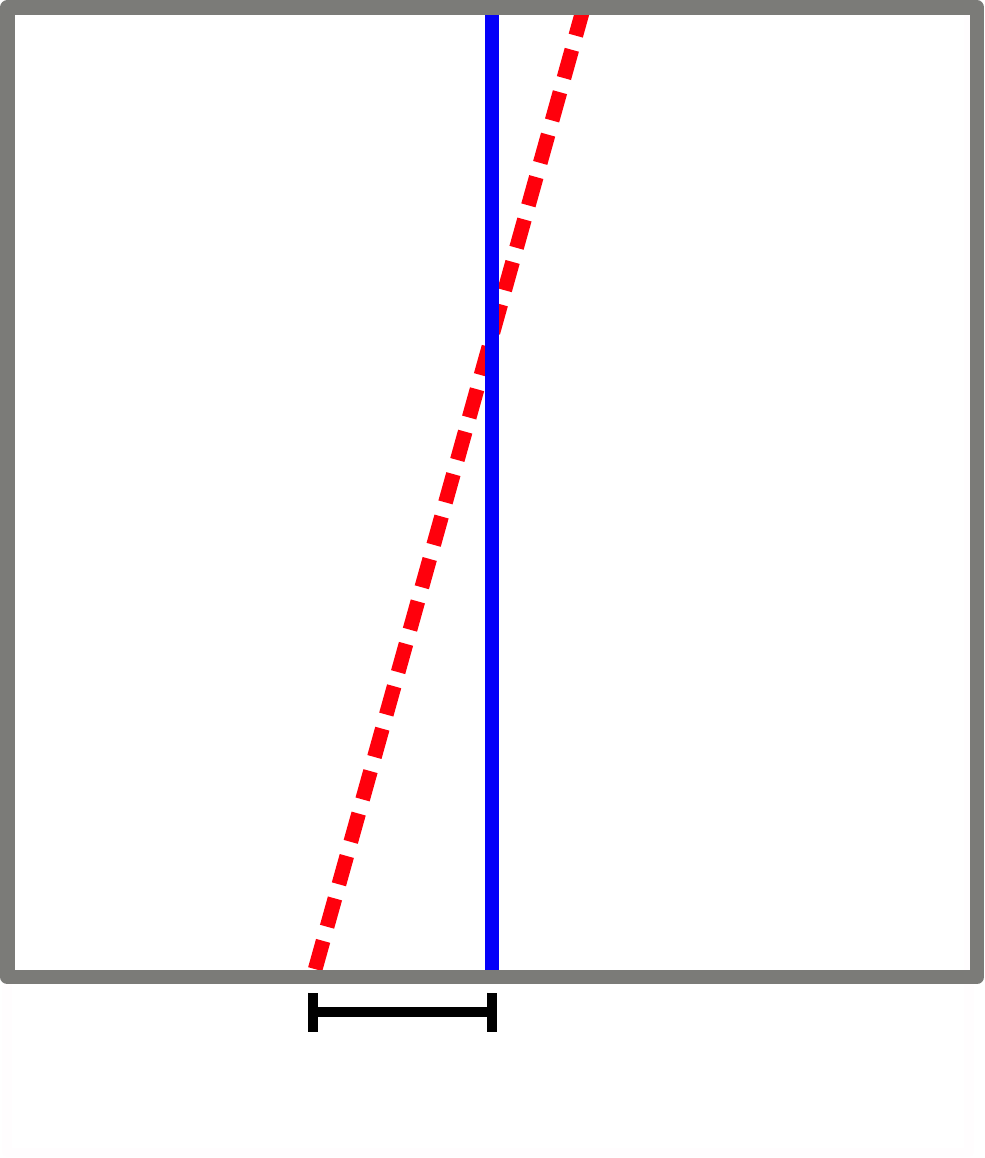
\caption{}
\label{fig:D_boundary_a}
    \end{subfigure}
    \begin{subfigure}[b]{0.3\textwidth}
    \centering
\def\svgwidth{1.\textwidth}
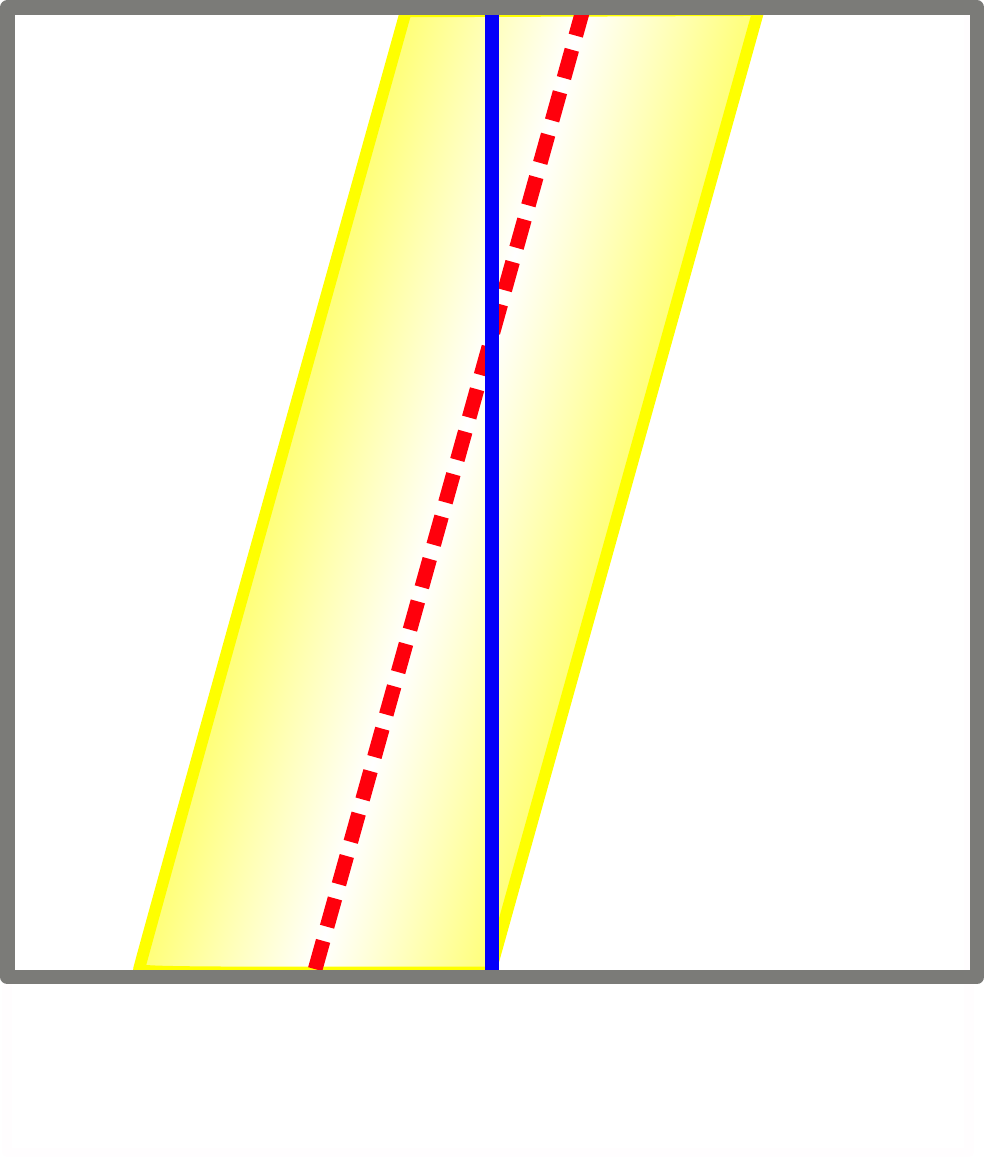
        \caption{}
        \label{fig:D_boundary_b}
    \end{subfigure}
    \begin{subfigure}[b]{0.3\textwidth}
    \centering
\def\svgwidth{1.\textwidth}
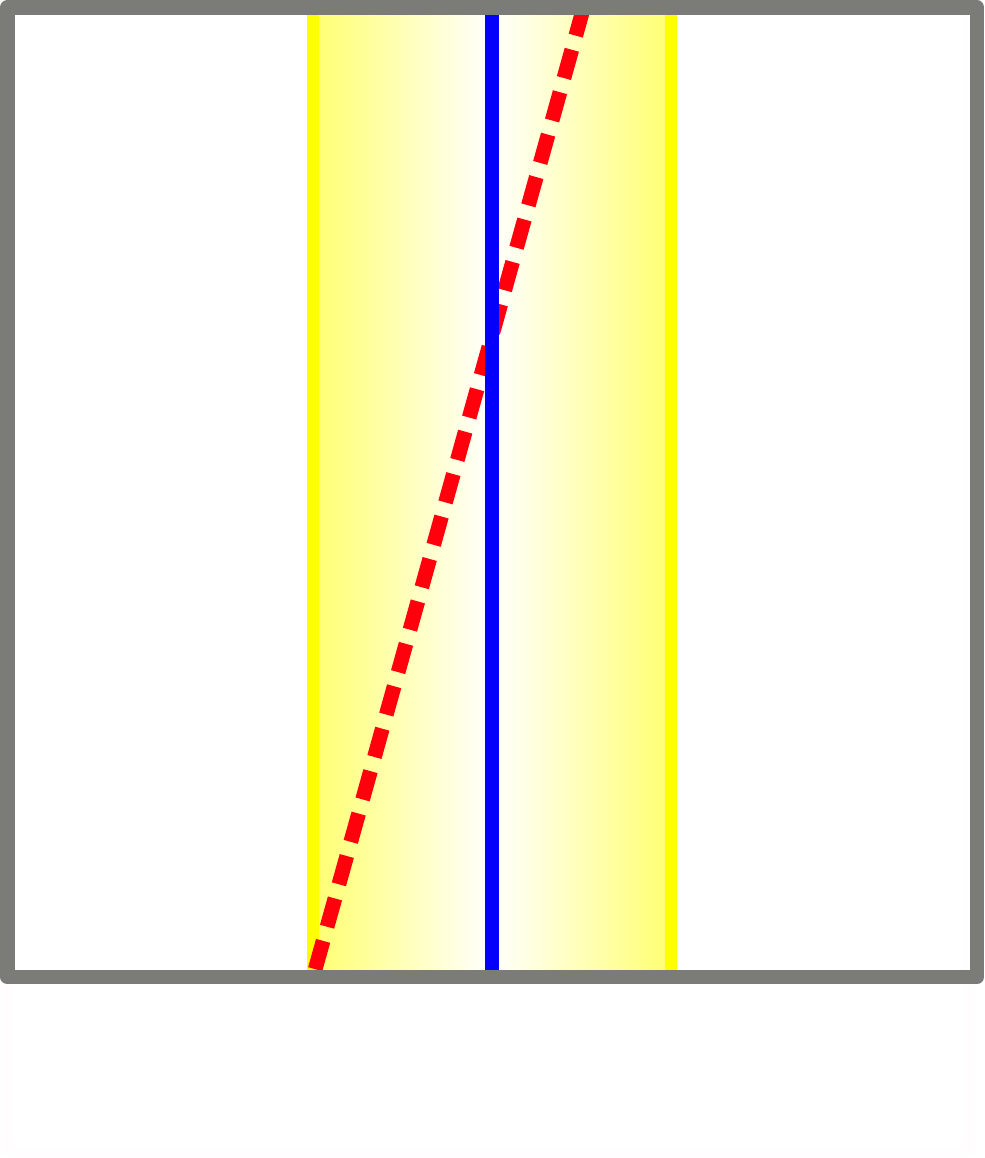
        \caption{}
        \label{fig:D_boundary_c}
    \end{subfigure}
    \caption{Assume that $t=1$, so that a Mapper function $f$ coincides with the clustering function $f_1$. In (a), everything left from the solid vertical blue line is labelled by $f$ as cluster 1 and everything right from that line is labelled by $f$ as cluster 2. Hence, $\partial(f)$ coincides with the solid line. Analogously, the left and right side of the dashed tilted line correspond to clusters 1 and 2 of $g$, respectively. Hence, $\partial(g)$ is precisely the dashed line. $D_\partial (f, g)$ is also illustrated in (a).
    In particular, $\forall \gamma > D_\partial (f, g)$, $f$ and $g$ agree both outside the $\gamma$-tube of $g$ (\emph{i.e.,} $f \triangleleft T_\gamma(g)$; see (b)) and outside the $\gamma$-tube of $f$ (\emph{i.e.,} $g \triangleleft T_\gamma(f)$; see (c)).}
    \label{fig:D_boundary}
\end{figure}

\begin{definition}\label{def:D_boundary}
Let $f$ and $g$ be Mapper functions in $\mathcal{N}_\partial$. The boundary 
distance $D_\partial$ is defined by 
    \begin{equation*}\label{eq:D_boundary}
        D_\partial (f, g) = \inf \left\lbrace  \gamma>0 \,|\, f \triangleleft T_\gamma(g) \text{ and } g \triangleleft T_\gamma(f) \right\rbrace .
    \end{equation*}
\end{definition}

The metric $D_\partial$ is therefore an interleaving distance between the 
$\gamma$-tubes of the functions $f$ and $g$. 

\begin{remark}\label{rmk:pseudo-distance}
    If some $U_i$ is unbounded then $D_\partial$ may be infinite. 
    In practice the support of $P$ will always be bounded, hence if $U_i$ is unbounded, then we may restrict $\mathcal{X}$ and $U_i$ to some bounded subset containing the support of $P$.
    Therefore unless stated otherwise, we assume from now on that each $U_i$ is bounded. 

    In this case we note (and leave it to the reader to check) that condition 
    (\ref{eq:different_on_the_boundary}) makes $D_\partial$ a metric.
    Without this restriction, $D_\partial$ is only a pseudo-metric, as is also the case for clusterings. 
\end{remark}

To get more information regarding the boundary metric,  we need to examine 
in a bit more detail the relationship between the space $\mathcal{N}$ of all 
Mapper functions and the spaces $\mathcal{F}_i$ of clusterings of the 
individual sets $U_i$ in the cover of $\mathcal{X}$. We have the following. 

\begin{lemma}\label{lemma:Bijection}
There exists a bijection 
\[
\phi: \mathcal{N} \longrightarrow  \prod_{i=1}^t \mathcal{F}^i. 
\]
\end{lemma}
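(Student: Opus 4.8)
The plan is to exhibit $\phi$ together with an explicit inverse, the only real content being that packaging the individual cluster labels into a single set-valued Mapper function loses no information. First I would define $\phi$ to send a Mapper function $f$ to the tuple $(f_1,\dots,f_t)$ of clusterings from which it is built, and define a candidate inverse $\psi\colon \prod_{i=1}^t \mathcal{F}^i \to \mathcal{N}$ sending a tuple $(g_1,\dots,g_t)$ to the Mapper function $x \mapsto \{\, g_i(x) \mid i=1,\dots,t,\ x\in U_i \,\}$ of Definition \ref{def:Mapper_functions}. Surjectivity of $\psi$ (equivalently, that $\phi$ is onto) is immediate, since by Definition \ref{def:Mapper_functions} every Mapper function arises in exactly this way from some tuple of clusterings.

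The key step is to recover each $f_i$ from $f$, and this is where the deliberate disjointness of the label sets $\{c^i_1,\dots,c^i_s\}$ across distinct bins $i$ is used; indeed this disjointness is precisely why each $U_i$ was clustered separately. For a point $x\in U_i$, the set $f(x)$ contains exactly one label from $\{c^i_1,\dots,c^i_s\}$, namely $f_i(x)$, because $f_i$ is an ordinary single-valued function on $U_i$. Hence $f_i(x)$ is the unique element of $f(x)\cap\{c^i_1,\dots,c^i_s\}$, and this formula reconstructs $f_i$ on all of $U_i$. Applied to chosen representatives, this shows $\phi\circ\psi = \id$ and $\psi\circ\phi=\id$.

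Finally I would check compatibility with the equivalence relations, the only point requiring care. Two tuples represent the same element of $\prod_{i=1}^t \mathcal{F}^i$ if and only if $f_i\sim g_i$ for every $i$, i.e. there are permutations $\pi^i$ of $\{c^i_1,\dots,c^i_s\}$ with $f_i=\pi^i g_i$; the combined permutation $\pi=\bigoplus_{i=1}^t\pi^i$ then carries the set-valued function built from $(g_i)$ onto the one built from $(f_i)$, so the two name the same Mapper function. Conversely, if two tuples yield the same Mapper function, applying the disentangling of the previous paragraph bin by bin forces $f_i\sim g_i$ for each $i$; disjointness of the label sets is again essential here, since it guarantees that any relabeling of the whole Mapper output restricts to an independent relabeling inside each bin. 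Thus $\psi$ descends to a well-defined bijection on quotients, and the main (mild) obstacle is exactly this verification that the bin-wise relabelings defining $\prod_{i=1}^t \mathcal{F}^i$ and the global relabelings $\bigoplus_{i=1}^t \pi^i$ of a Mapper function coincide.
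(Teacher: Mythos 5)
Your proposal is correct and follows essentially the same route as the paper: the forward map sends $f$ to $(f_1,\dots,f_t)$ with $f_i(x)$ recovered as the unique element of $f(x)\cap\{c^i_1,\dots,c^i_s\}$, and the inverse is the construction of Definition \ref{def:Mapper_functions}. The only difference is that you spell out the compatibility with the label-permutation equivalence relations, a verification the paper's two-sentence proof leaves implicit.
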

\begin{proof}
    Let $\phi$ be a map 
\begin{equation}\label{eq:varphi}
    \begin{array}{rccc}
        \varphi: & \mathcal{N} & \xrightarrow{\hspace*{1cm}} &
        \prod_{i=1}^t \mathcal{F}^i\\
        & f & \xmapsto{\hspace*{0.6cm}} & \left(f_1, \ldots, f_t \right),
    \end{array}
\end{equation}
where each $f_i \colon U_i \longrightarrow \left\lbrace c^i_1 ,\dots ,c^i_s \right\rbrace$ is a function defined as follows: for every $x \in U_i$,
$f_i(x)$ is the only value in the singleton set $f(x) \cap \{ c^i_1 ,\dots ,c^i_s \} $.

The inverse map to $\varphi$ is given by the construction of a 
Mapper function $f$ from clustering functions $f_1, \ldots, f_t$ as 
described in 
Definition \ref{def:Mapper_functions}. 
\end{proof}

It follows that  we can view $\mathcal{N}_\partial$ as the product $\prod_{i=1}^t \mathcal{F}_\partial^i$ and so the space
 $\mathcal{N}_\partial$ is naturally a  product metric space in the following way. If $f$ and $g$ are represented
as $f=(f_1, \dots, f_t)$ and $g = (g_1, \dots, g_t)$, then by the 
bijection of Lemma \ref{lemma:Bijection}, 
it is straightforward to check that  
\begin{equation}\label{eq:D_boundary_as_max}
    D_\partial (f, g) = \max_{i=1,\dots,t} D_\partial (f_i, g_i),
\end{equation}
where $D_\partial (f_i, g_i)$ is the Mapper distance $D_\partial$  restricted to clustering functions on a single $U_i$.
From now on, we will think of $\mathcal{N}_\partial$ as the product metric spaces $\left( \mathcal F_\partial^i, D_\partial \right)$.

The metric $D_\partial$ has several nice properties exhibited in the next Proposition, which 
provides  a crucial step in the proof of Theorem \ref{thm:Bound_depending_mostly_on_gamma} that provides an upper bound for the instability of Mapper.

\begin{proposition}\label{prop:Properties_of_D_boundary} 
    Denote by $\left\lbrace U_i\right\rbrace _{i=1}^t$ a connected, bounded cover of the metric space $\mathcal{X}$ such that no $U_i=\mathcal{X}$.
    Then, with the notation above, the following properties hold:
    \begin{enumerate}
        \item
        Let $f, g \in \mathcal{N}_\partial$ and $\gamma > 0$. Then, $g \triangleleft T_\gamma(f)$ implies that $\partial(g) \subseteq T_\gamma(f)$.
        \item
        Let $f, g \in \mathcal{N}_\partial$ and $\gamma > 0$ be such that $D_\partial(f, g) \leq \gamma$ and that the clusters determined by $f$ and $g$ are connected in each $U_i$. Then, for any choice of representatives $f_i,g_i\in F_i$, there exists $\pi$ such that for all $x \in \mathcal{X}$,
        \begin{equation*}
            f(x) \neq \pi (g(x)) \Longrightarrow x \in T_\gamma(g),
        \end{equation*}
        where $\pi = \bigoplus_{i=1}^t \pi^i$, and $\pi^i$ denotes a permutation of the set $\left\lbrace c^i_1 ,\dots ,c^i_s \right\rbrace $.
        \item \label{propItem:Properties_of_D_boundary_relativeCompactness}
        If $\mathcal{X}$ is a subset of $\mathbb{R}^a$, the metric on $\mathcal X$ is induced by a norm on $\mathbb R^a$ and each $U_i \subseteq \mathcal X$ is compact, then
        $\left( \mathcal{N}_\partial, D_\partial \right)$ is relatively compact.
    \end{enumerate}
\end{proposition}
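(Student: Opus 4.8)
The plan is to reduce all three statements to the case of a single bin and then reassemble via the product structure. By Lemma \ref{lemma:Bijection} and formula (\ref{eq:D_boundary_as_max}), $\mathcal{N}_\partial$ is the max-metric product $\prod_{i=1}^t(\mathcal{F}_\partial^i, D_\partial)$, while $\partial(f)=\bigcup_i\partial(f_i)$ and $T_\gamma(f)=\bigcup_i T_\gamma(f_i)$ by (\ref{eq:tube_as_union}), and $g\triangleleft T_\gamma(f)$ holds exactly when $g_i\triangleleft T_\gamma(f_i)$ for every $i$. Hence it suffices to prove each property for the clustering of a single $U_i$ and then take unions (resp.\ maxima, resp.\ products) over $i$.

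For (1) I would argue by local constancy. Since $\partial U_i\subseteq\partial(f_i)\subseteq T_\gamma(f_i)$, the part $\partial U_i$ of $\partial(g_i)$ already lies in the tube, so only cluster boundaries remain. Suppose $x\in\partial(g_i)$ but $x\notin T_\gamma(f_i)$, so $D(x,\partial(f_i))>\gamma$. Then the ball $B\bigl(x,\,D(x,\partial(f_i))-\gamma\bigr)$ is disjoint from $T_\gamma(f_i)$, and $x$ is at positive distance from $\partial(f_i)$, hence an interior point of its own $f_i$-cluster; thus $f_i$ is constant on a ball around $x$ lying in $U_i-T_\gamma(f_i)$. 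Because $g_i\triangleleft T_\gamma(f_i)$ makes the two induced partitions agree on $U_i-T_\gamma(f_i)$, every point of that ball shares the same $g_i$-label, so $g_i$ is constant near $x$ and $x\notin\partial(g_i)$ (it is not in $\partial U_i$ either, as $x\notin T_\gamma(f_i)$), a contradiction. Therefore $\partial(g_i)\subseteq T_\gamma(f_i)$, and uniting over $i$ gives $\partial(g)\subseteq T_\gamma(f)$.

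For (2), the first step is to upgrade $D_\partial(f,g)\le\gamma$ to the statement $f\triangleleft T_\gamma(g)$ at the value $\gamma$ itself: the tube $T_\gamma(g)$ is closed, and since $U_i-T_\gamma(g_i)=\bigcup_{\gamma'>\gamma}\bigl(U_i-T_{\gamma'}(g_i)\bigr)$, the agreement condition holding for all $\gamma'>\gamma$ passes to $\gamma$. So on each $U_i-T_\gamma(g_i)$ we have $g_i(x)=g_i(y)\Leftrightarrow f_i(x)=f_i(y)$. I would then define $\pi^i$ by $\pi^i(g_i(x))=f_i(x)$ for $x\notin T_\gamma(g_i)$; this equivalence makes $\pi^i$ a well-defined injective partial map between the labels occurring outside the tube, which extends to a permutation of $\{c^i_1,\dots,c^i_s\}$. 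By construction $f_i(x)=\pi^i(g_i(x))$ off the tube, so $f_i(x)\neq\pi^i(g_i(x))$ forces $x\in T_\gamma(g_i)$; reading $\pi=\bigoplus_i\pi^i$ at the level of the label-sets $f(x)$ and $\pi(g(x))$ then yields $f(x)\neq\pi(g(x))\Rightarrow x\in T_\gamma(g)$. The connectedness of the clusters in each $U_i$ is what I would use to guarantee that this bin-wise matching is globally consistent—preventing a cluster from reappearing in disconnected pieces outside the tube that demand incompatible values—and this is the delicate technical point, exactly of the kind flagged in Remark \ref{rmk:connectedness}.

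For (3) I would again work one bin at a time, since a finite max-metric product of relatively compact spaces is relatively compact. The idea is to parametrise clusterings by their decision boundaries through the map $\Phi\colon(\mathcal{F}_\partial^i,D_\partial)\to(\mathcal{K}(\bar U_i),d_H)$, $f_i\mapsto\partial(f_i)$, whose image lies in the space of nonempty compact subsets of the compact set $\bar U_i\subseteq\mathbb{R}^a$ with the Hausdorff metric $d_H$; this hyperspace is itself compact by the Blaschke selection theorem. The comparison I must establish is $D_\partial(f_i,g_i)\le d_H(\partial(f_i),\partial(g_i))$, which makes $\Phi$ a map whose inverse is $1$-Lipschitz, so that total boundedness transfers from the (compact, hence totally bounded) image back to $\mathcal{F}_\partial^i$, giving relative compactness of the product. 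I expect this comparison to be the main obstacle: showing that small Hausdorff distance between boundaries forces the partitions to agree outside the corresponding tubes again requires a connectedness-of-complement argument of the type hinted at in Remark \ref{rmk:connectedness} and in \cite{BenDavid_vonLuxburg08}, and it is there, rather than in the abstract compactness, that the real work lies.
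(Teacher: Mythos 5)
Your treatment of parts (1) and (2) is correct and follows essentially the paper's route: reduce to a single bin via Lemma \ref{lemma:Bijection}, (\ref{eq:tube_as_union}) and (\ref{eq:D_boundary_as_max}), then argue locally. In (1) your contrapositive local-constancy argument is, if anything, tidier than the paper's (which shows every ball around a point of $\partial(g_i)$ must meet the closed set $T_\gamma(f_i)$). In (2) you correctly fill a step the paper glosses over, namely upgrading the infimum condition $D_\partial(f,g)\leq\gamma$ to agreement outside $T_\gamma(g)$ itself, and your direct construction of $\pi^i$ from the well-defined, injective label correspondence shows that the connectedness hypothesis is not actually needed for the conclusion, whereas the paper routes this step through connectedness of the sets $f_i^{-1}(c^i_j)-T_\gamma(g_i)$.

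The genuine gap is in part (3). The paper simply invokes \cite[Proposition 1]{BenDavid_vonLuxburg08} for the relative compactness of each $(\mathcal{F}_\partial^i, D_\partial)$ and then uses the finite max-metric product; you instead attempt a proof via the Blaschke selection theorem, resting on the inequality $D_\partial(f_i,g_i)\leq d_H(\partial(f_i),\partial(g_i))$. That inequality is false, because the decision boundary does not determine the clustering. Take $U_i=[0,1]$ with $s=3$, let $f_i$ assign labels $c^i_1,c^i_2,c^i_3$ to $[0,\tfrac13)$, $(\tfrac13,\tfrac23)$, $(\tfrac23,1]$ and let $g_i$ assign labels $c^i_1,c^i_2,c^i_1$ to the same three intervals. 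Then $\partial(f_i)=\partial(g_i)$, so $d_H(\partial(f_i),\partial(g_i))=0$, yet no permutation identifies $f_i$ with $g_i$ outside $T_\gamma(g_i)$ for small $\gamma$ and the two are not equivalent under (\ref{eq:different_on_the_boundary}); hence $D_\partial(f_i,g_i)>0$. (Note that $\mathcal{N}_\partial$ as defined does not require connected clusters, so this example is admissible in part (3).) Consequently $\Phi$ is not injective on classes, its ``inverse'' is not single-valued, and total boundedness cannot be pulled back from the hyperspace as you propose. You flagged this comparison as the main obstacle, but it is not merely delicate --- it cannot hold as stated. A repair would have to record, in addition to the boundary, the assignment of labels to the complementary regions (for instance by sending $f_i$ to the $s$-tuple of closures of its clusters and using a label-matched Hausdorff distance), or one can simply quote the Ben-David--von Luxburg result as the paper does.
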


\begin{proof}
    Let $f, g \in \mathcal{N}_\partial$ and $\gamma > 0$ be such that $g \triangleleft T_\gamma(f)$. 
    By definition, this means that 
    for all $i$, $g_i \triangleleft T_\gamma(f_i).$
    The points in $\partial U_i$ contained in $\partial(g)$ are also contained in $\partial f$ and since $\partial f \subseteq T_\gamma(f)$ are contained in $T_\gamma(f)$ too.
    By definition of the clustering boundary using the metric condition (\ref{eq:MetricBoundaryCondition}), for every $x\in \partial(g_i)-\partial U_i$ and every $\epsilon >0$, the open ball $B(x,\epsilon)$ in $U_i$ contains two points $y$ and $z$ such that $f_i(y)\neq f_i(z)$. 
    Hence by definition of $g \triangleleft T_\gamma(f_i)$, for every $\epsilon >0$, we have that $B(x,\epsilon) \cap T_{\gamma}(f)$ is nonempty.
    Then since $T_{\gamma}(f_i)$ is a closed set, we have that $x\in T_{\gamma}(f_i)$, which implies 
     that $\partial(g_i) \subseteq T_\gamma(f_i)$ for all $i$.
    Therefore, using Definition \ref{eq:BoundaryMapper} and equality (\ref{eq:tube_as_union}), we have that
    \begin{equation*}
        \partial(g) = \bigcup_{i=1}^t \partial(g_i) \subseteq \bigcup_{i=1}^t T_\gamma(f_i) = T_\gamma(f),
    \end{equation*}
    which proves (1).
    
    Let $f, g \in \mathcal{N}_\partial$ and $\gamma > 0$ be such that $D_\partial(f, g) \leq \gamma$, with clusters connected in each $U_i$.
    This means that $D_\partial(f_i, g_i) \leq \gamma$ for all $i$.
    By part (1) of the Proposition, for each $i$, the $\gamma$ tube
    $T_{\gamma}(g_i)$ contains $\partial(f_i)$ and contains $\partial(g_i)$ by construction.   
    Since we assume the clusters are connected, the intersections
    \begin{equation*}
        \left( f^{-1}_i(c^i_j) - T_{\gamma}(g_i) \right)
        \cap \left( g^{-1}_i(c^i_k) - T_{\gamma}(g_i) \right)
    \end{equation*}
    are either empty or connected and the functions $f_i$, $g_i$ are constant on these sets for any cluster labels
    $c^i_j$, $c^i_k$ respectively.
    Therefore for every $i$ and any choice of representatives $f_i,g_i\in F_i$, there exists $\pi^i$ such that for all $x \in U_i$,
    \begin{equation*}
        f_i(x) \neq \pi^i (g_i(x)) \Longrightarrow x \in T_\gamma(g_i),
    \end{equation*}
    where $\pi^i$ denotes a permutation of the set $\left\lbrace c_1^i, c_2^i \ldots, c_s^i \right\rbrace $.
    Setting $\pi = \bigoplus_{i=1}^t \pi^i$, the following holds for all $x \in \mathcal{X}$,
    \begin{equation*}
        f(x) \neq \pi (g(x)) \Longrightarrow \exists i: f_i(x) \neq \pi^i (g_i(x)) \Longrightarrow \exists i: x \in T_\gamma(g_i) \Longrightarrow x \in T_\gamma(g),
    \end{equation*}
    where the last implication follows from (\ref{eq:tube_as_union}), proving (2).
    
    Under the additional assumption of each
    $U_i \subseteq \mathbb{R}^s$ being compact, \cite[Proposition 1]{BenDavid_vonLuxburg08} (whose proof is that same in our setting) shows that each $\mathcal{F}_\partial^i$ is relatively compact. Since $\mathcal{N}_\partial$ is endowed with a product metric $D_\partial$, it follows that is $\mathcal{N}_\partial$ is relatively compact too, which proves (3). 
\end{proof}

\begin{remark}\label{rmk:compactness}
    In Proposition \ref{prop:Properties_of_D_boundary}, point  (\ref{propItem:Properties_of_D_boundary_relativeCompactness}), we assumed each bin $U_i$ to be compact. Consider the classical Mapper algorithm, 
    where a real-valued function $h \colon \mathcal{X} \longrightarrow \mathbb{R}$ and a collection of intervals $\left\lbrace I_i\right\rbrace _{i=1}^t$ covering $h(\mathcal X)$ are used to define each bin $U_i$ as $h^{-1}(I_i)$.
    Basic topology shows that a sufficient condition for each $U_i$ to be compact consists of each interval $I_i$ being of the form $[a_i, b_i]$ for some $a_i, b_i \in \mathbb{R}$, and the function $h$ being a proper map, that is
    a function such that inverse images of compact subsets are compact.
    Furthermore, it is enough to assume $\mathcal{X}$ to be compact and $h$ to be continuous to guarantee $h$ to be a proper map.
    Notice also that if all bins are compact, so is $\mathcal{X}$, as a finite union of compact sets.
\end{remark}




\section{Mapper stability as a function of Mapper parameters}
\label{sec:Upper_bounding_instability_of_Mapper}

In this section, in Theorems \ref{thm:Bound_depending_mostly_on_gamma} and \ref{thm:Bound_depending_on_cover_sampleSize_etc} we prove two results that provide estimates  of the instability of Mapper.
Moreover, as we shall see, these results provide 
practical insights into how the stability of the Mapper algorithm can be affected by the specific choice of the Mapper parameters, including the filter function, the cover, the clustering algorithm, the metric and the sample size.

Throughout this section and the remainder of the paper, we assume 
that $\mathcal{X}$ is a metric space equipped with a probability measure
$P\in M_1(\mathcal X)$. We further assume that
$\mathcal{X}$ is given a cover
 $\mathcal{X} = \bigcup_{i=1}^t U_i$ such that each $U_i$ is bounded, connected, has nonzero mass and  $U_i \neq \mathcal{X}$ for all $i$. As before, we assume given quality 
 functions
$Q^i$  on each $U_i$, together with 
the empirical quality functions
$Q_{n}^i$. Furthermore, we will 
 use the following additional notation.
\begin{itemize}
    \item
    Denote by $f$ the unique optimal Mapper function of $\mathcal X$, given by
    \begin{equation*}
        f =  \prod_{i=1}^t{C^i(P_i)}.
    \end{equation*}
    where $C^i$ is an optimal clustering function defined in 
    (\ref{eq:OptimalClusteringFunction1})
    \item
    Denote by $f^n$ the unique optimal empirical Mapper function, that is the function
    \begin{equation*}
        f^n = \prod_{i=1}^t{C_{n_i}^i}(X)
    \end{equation*}
    obtained from size-$n$ samples $X\in \mathcal{X}^n$ using 
    the empirical clustering functions $C_{n_i}^i$.
\end{itemize}
Following Proposition \ref{prop:Properties_of_D_boundary} (2), we will also assume that all clusterings on $U_i$ have connected clusters, however this automatically is the case for all common clustering procedures.
We begin by generalizing the estimates obtained in \cite[Proposition 2]{BenDavid_vonLuxburg08}. 

\begin{theorem}\label{thm:Bound_depending_mostly_on_gamma}
    Using the above assumption and notation, 
    the instability of the Mapper algorithm satisfies
    \begin{equation*}
        \text{\rm{InStab}}_\text{\rm{Mapper}}(\left\lbrace Q^i_{n_i} \right\rbrace _{i=1}^t,n, P) \leq 
        2 \bigg( P(T_\gamma(f)) + P \left(  D_\partial(f^n, f) > \gamma \right) + P \left(  n_i = 0 \right) \bigg),
    \end{equation*}
    where $\gamma \geq 0$ and
    \begin{itemize}
        \item
        $P(T_\gamma(f))$ denotes the mass of the $\gamma$-tube of $f$, 
        \item
        $P \left(  D_\partial(f^n, f) > \gamma \right)$ denotes the probability that the optimal empirical Mapper function $f^n$ satisfies $D_\partial(f^n, f) > \gamma$,
        \item
        and $P \left(n_i = 0 \right)$ is the probability that some $n_i$ is $0$, for $i=1,\dots,t$.
    \end{itemize}
\end{theorem}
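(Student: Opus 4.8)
The plan is to reduce the Mapper instability to a one-sample estimate and then localize the disagreement between the empirical and optimal Mapper functions inside the tube $T_\gamma(f)$. By Definition \ref{def:Mapper_instability} the instability equals $\mathbb{E}[D_M(f^n_X, f^n_Y)]$, where $X, Y \in \mathcal{X}^n$ are independent samples, $f^n_X$ and $f^n_Y$ are the corresponding optimal empirical Mapper functions extended to all of $\mathcal{X}$ by Voronoi cells, and $D_M$ is evaluated on the combined $2n$ points. Since $D_M$ is a metric (the Mapper analogue of the minimal matching distance), the triangle inequality gives $D_M(f^n_X, f^n_Y) \le D_M(f^n_X, f) + D_M(f, f^n_Y)$; taking expectations and using that $X$ and $Y$ are identically distributed yields $\text{InStab}_\text{Mapper} \le 2\,\mathbb{E}[D_M(f^n, f)]$. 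This produces the factor of $2$, and it remains to bound the one-sample quantity $\mathbb{E}[D_M(f^n, f)]$.

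First I would split on the event $G = \{D_\partial(f^n, f) \le \gamma\} \cap \{n_i \ge 1 \text{ for all } i\}$. Off $G$ I bound $D_M(f^n, f) \le 1$ trivially, which after a union bound contributes at most $P(D_\partial(f^n, f) > \gamma) + P(n_i = 0)$. On $G$ the clusterings have connected clusters by hypothesis, so Proposition \ref{prop:Properties_of_D_boundary}(2) applies with $f^n$ as its first argument and the optimal $f$ as its second: there is a block permutation $\pi = \bigoplus_{i=1}^t \pi^i$ with $f^n(x) \neq \pi f(x) \Rightarrow x \in T_\gamma(f)$. Because $D_M$ is defined as a minimum over exactly these block permutations, evaluating at this particular $\pi$ gives $D_M(f^n, f) \le \frac{1}{2n}\bigl|\{\, j : Z_j \in T_\gamma(f)\,\}\bigr|$, where $Z_1, \dots, Z_{2n}$ denote the combined sample points.

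The final step is to take expectations, and the key point is that $T_\gamma(f)$ is a \emph{deterministic} set: the optimal Mapper function $f = \prod_{i=1}^t C^i(P_i)$ depends only on $P$ and the cover, not on the sample. Hence each $Z_j \sim P$ is independent of $T_\gamma(f)$, and dropping the (nonnegative) restriction to $G$ gives $\mathbb{E}\bigl[\tfrac{1}{2n}|\{\, j : Z_j \in T_\gamma(f)\,\}|\bigr] = P(T_\gamma(f))$. Collecting the three contributions yields $\mathbb{E}[D_M(f^n, f)] \le P(T_\gamma(f)) + P(D_\partial(f^n, f) > \gamma) + P(n_i = 0)$, and multiplication by $2$ gives the stated bound.

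I expect the main obstacle to be the bookkeeping at the interface between the measure-theoretic distance $D_\partial$ and the combinatorial distance $D_M$. Proposition \ref{prop:Properties_of_D_boundary}(2) confines disagreements to the tube around its \emph{second} argument, so it is essential to place the optimal $f$, rather than the random $f^n$, in that role; this is precisely what makes the relevant tube $T_\gamma(f)$ deterministic and lets the expectation collapse to $P(T_\gamma(f))$. One must also verify that the single block permutation produced by the proposition is admissible in the minimization defining $D_M$, which holds by construction. Finally, the degenerate case where some bin receives no sample point, so that the empirical clustering degenerates to a constant and the hypotheses of the localization step can fail, is exactly what the error term $P(n_i = 0)$ absorbs through the trivial estimate $D_M \le 1$.
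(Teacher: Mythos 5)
Your proposal is correct and follows essentially the same route as the paper: symmetrize via the triangle inequality for $D_M$ to reduce to $2\,\mathbb{E}[D_M(f^n,f)]$, partition the sample space into the events $\{D_\partial(f^n,f)\le\gamma\}$, $\{D_\partial(f^n,f)>\gamma\}$ and $\{\text{some } n_i=0\}$, bound $D_M\le 1$ on the latter two, and on the first apply Proposition \ref{prop:Properties_of_D_boundary}(2) with $f$ in the second slot so that mismatches are confined to the deterministic tube $T_\gamma(f)$, whose indicator integrates to $P(T_\gamma(f))$. The only (cosmetic) difference is that you evaluate $D_M$ at the specific block permutation supplied by the proposition rather than assuming WLOG that the minimizing permutation is the identity, which is if anything slightly cleaner than the paper's phrasing.
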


\begin{proof}
    
    Define the following three collections of size-$n$ samples $X'=( X_1, \ldots, X_n ) \in \mathcal{X}^n$:
    \begin{itemize}
    \item Let $M_{\leq \gamma}$ be the set of $X'\in\mathcal{X}^n$ for which $D_\partial(f^n, f) \leq \gamma$.
    \item Let $M_{> \gamma}$ be the set of $X'\in\mathcal{X}^n$ for which $D_\partial(f^n, f) > \gamma$.
    \item Let $M_{\emptyset}$ be the set of $X'\in\mathcal{X}^n$ for which $D_\partial(f^n, f)$ is not defined, which is the set of those $X'$  for which $X'\cap U_i$ no elements, for some $i=1,\dots,t$.
    \end{itemize}
    In particular, we have that $\mathcal{X}^n=M_{\leq \gamma} \cup M_{> \gamma} \cup M_{\emptyset}$.
    Without loss of generality, let us assume that the permutation $\pi$ for which the minimum value of $D_M$ is attained (see Definition \ref{def:D_Mapper}) is the identity.
    By Definition \ref{def:Mapper_instability},
    \begin{equation*}
        \text{\rm{InStab}}_\text{\rm{Mapper}}(\left\lbrace Q^i_{n_i} \right\rbrace _{i=1}^t,n, P) =
    \mathbb{E} \left( \mathcal{I}(\{Q^i_{n_i}\}^t_{i=1}) \right).
    \end{equation*}
   To simplify notation, we will write $\textrm{InStab}$ for 
   the left hand side of the above equation. Let $f^n=\prod_{i=1}^t C^i_{n_i(X')}(X')$ and $g^n=\prod_{i=1}^t C^i_{n_i(X'')}(X'')$ denote the optimal empirical Mapper functions for samples $X',X'' \in \mathcal{X}^n$, respectively.
   Recall that, using the Voronoi cell construction (\ref{eq:Voronoi}) Mapper functions $f^n$ and $g^n$ can be extended to the $2n$-point sample $X=(X',X'')\in \mathcal{X}^{2n}$.
   Then by (\ref{eq:InstabilityVariableMapper}), taking $D_M$ over all point in $X=(X_1,\dots,X_{2n})$ and using the triangle inequality,
    \begin{align*}
        \textrm{InStab}
        = & \int_{X\in \mathcal{X}^{2n}} D_M(f^n(X),g^n(X)) dP^{2n}(X)
        \\
        \leq & \int_{X\in \mathcal{X}^{2n}} \left(D_M(f^n(X),f(X)) + D_M(f(X),g^n(X))\right) dP^{2n}(X)
    \end{align*}
    where we note that each of the two terms now depends only 
    on the variables either $f^n$ or $g^n$, respectively. Therefore,  we can now write
    \begin{align*}
        \textrm{InStab}
        = & \; 2 \int_{X\in \mathcal{X}^{2n}} D_M(f^n(X),f(X)) dP^{2n}(X)
        \\
        = & \; 2 \Bigg{(} \int_{\substack{X'\in M_{\leq \gamma}, \\ X''\in\mathcal{X}^n}} D_M(f^n(X),f(X)) dP^{2n}(X) +
        \int_{\substack{X'\in M_{> \gamma}, \\ X''\in\mathcal{X}^n}} D_M(f^n(X),f(X)) dP^{2n}(X)
        \\ & +
        \int_{\substack{X'\in M_{\emptyset}, \\ X''\in\mathcal{X}^n}} D_M(f^n(X),f(X)) dP^{2n}(X) \Bigg{)}.
    \end{align*}
    Since $D_M(f^n(X),f(X))\in [0,1]$ and using Defintion \ref{def:D_Mapper} for $D_M$, we obtain
    \begin{align*}
        \textrm{InStab}
        \leq & \; 2 \left( \frac{1}{2n}\int_{\substack{X'\in M_{\leq \gamma}, \\ X''\in\mathcal{X}^n}} \sum_{i=1}^{2n}{\mathds{1}_{f^n(X_i)\neq f(X_i)}} dP^{2n}(X) +
        P \left(X'\in M_{> \gamma} \right) +
        P \left(X'\in M_{\emptyset} \right) \right).
    \end{align*}
    If $f^n$ is obtained from a sample in $M_{\leq \gamma}$, then
    Proposition \ref{prop:Properties_of_D_boundary} (3) gives that for all $x \in \mathcal{X}$,
    $$f^n(x) \neq f(x) \Longrightarrow x \in T_\gamma(f).$$
    On the other hand, by definition, we have
    $P \left(M_{> \gamma} \right) = P \left( D_\partial(f^n, f) > \gamma  \right)$ and
    $P \left( M_{\emptyset} \right) =$  \\$P \left(  n_i = 0 \right)$.
    Therefore, we conclude
    \begin{align*}
        \textrm{InStab}
        \leq & \; 2 \left( \frac{1}{2n}\int_{\substack{X'\in M_{\leq \gamma}, \\ X''\in\mathcal{X}^n}} \sum_{i=1}^{2n}{\mathds{1}_{X_i \in T_{\gamma}(f)}} dP^{2n}(X) +
        P \left(  D_\partial(f^n, f) > \gamma \right) + P \left(  n_i =0  \right) \right)
        \\
        = & \; 2 \left( \frac{1}{2n}\cdot 2n\int_{\substack{x\in M_{\leq \gamma}}} {\mathds{1}_{x \in T_{\gamma}(f)}} dP + P \left(  D_\partial(f^n, f) > \gamma \right) + P \left(  n_i =0  \right) \right)
        \\
        = & 2 \bigg( P(T_\gamma(f)) + P \left(  D_\partial(f^n, f) > \gamma \right) + P \left(  n_i =0  \right) \bigg).
    \end{align*}
\end{proof}

If condition of Theorem \ref{thm:Bound_depending_on_cover_sampleSize_etc} that no $U_i$ is all of $\mathcal{X}$ is relaxed, then the $2P(n_i = 0)$ term of the bound is replaced by term
\begin{equation}\label{eq:ExtraSummand}
    2P(f_i^n=c^i_1),
\end{equation}
where $P(f_i^n=c^i_1)$ is the probability that the optimal empirical clustering function has a single cluster.
For a clustering procedure that returns at least two clusters,  this term becomes 
\begin{equation}\label{eq:ExtraSummandBetter}
    2P(n_i \leq 1),
\end{equation}
where $P(n_i \leq 1)$ is the probability that less then $2$ points of the $n$-point sample are contained in $U_i$.
In the case when $t=1$ and the cover of $\mathcal{X}$ consists of a single member, then the Mapper algorithm reduces to a clustering algorithm and  Theorem  \ref{thm:Bound_depending_mostly_on_gamma} recovers \cite[Proposition 2]{BenDavid_vonLuxburg08}.

\begin{remark}
    \label{rmk:Reasons_which_produce_instability_1}
    \emph{\textbf{(Reasons for instability - Part I)}}
    Theorem \ref{thm:Bound_depending_mostly_on_gamma} can be used to identify the effect of particular parameter choices on the instability of the Mapper output as follows.
    Since each $U_i$ is assumed to have nonzero mass,
    the term $P(n_i = 0)$ will be insignificant provided no $U_i$ has extremely low mass or the sample size $n$ is very small. 
    We therefore omit this term form the remainder of the paper.
    The bound becomes large if $P(T_\gamma(f))$ is large, when the mass is concentrated around the decision boundary $\partial(f)$ of the optimal clustering $f$.
    This may happen if any of the following hold:
    \begin{itemize}
        \item [(a)]
        The decision boundaries $\partial(f_i)$ lie in a highly dense area.
        \item [(b)]
        The decision boundaries $\partial(f_i)$ are \lq long' in the 
        sense of a suitably defined path distance along $\partial(f_i)$.
        \item [(c)]
        There is low overlap between bins.
    \end{itemize}
   Moreover, Proposition \ref{prop:boundsForMassfFTubes} suggests $P(T_\gamma(f))$ can also be large if this holds:
    \begin{itemize}
        \item [(d)]
        The decision boundaries of different members of the cover are relatively far apart.
    \end{itemize}
    Indeed, 
    small changes to decision boundaries that are far apart necessarily increase the distance between the Mapper functions.  This is not  always true for decision boundaries that are close since they are more likely to mismatch on the same points, see Figure \ref{fig:FarBoundary} for an illustration. 
    
    Even if $P(T_\gamma(f))$ is small, $P \left( D_\partial(f_n, f) > \gamma \right)$ can still increase the bound, which happens if:
        \begin{itemize}
        \item [(e)]
        The decision boundaries $\partial (f_i^n)$ vary a lot with the choice of the sample.
        \end{itemize}
    While points 
    (a), (b) and (e) above are generalizations of those stated in \cite[\S 3]{BenDavid_vonLuxburg08}, the phenomena described 
   in  (c) and (d) are unique to Mapper, since they involve interactions within the cover.
\end{remark}

\begin{figure}[ht!]
    \centering
    \def\svgwidth{1.\textwidth}
    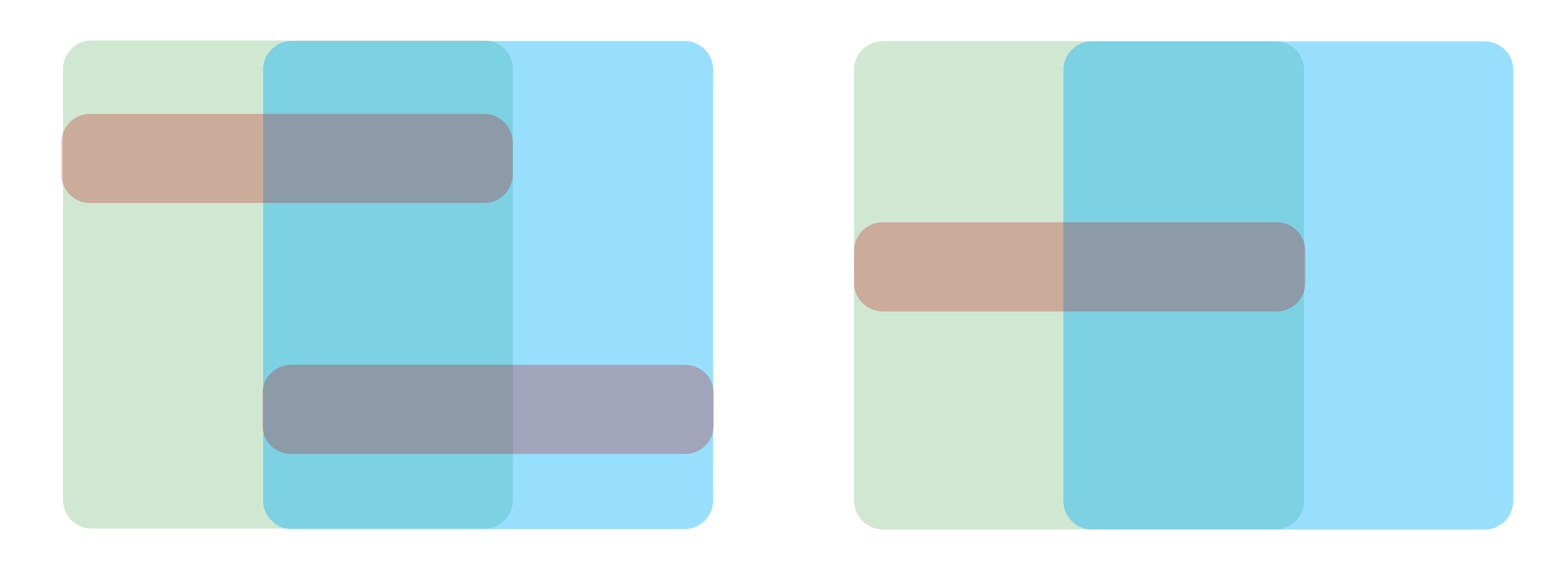
    \caption{The images show two overlapping bins $U_1$ and $U_2$ (green and blue respectively) as well as 
    the regions (red) where clustering functions assigned to each bin do not agree. On the left, the region in $U_1$ where $f_1$, $g_1$ do not agree, does not
    intersect the region where $f_2$, $g_2$ disagree in $U_2$.
    On the right, the mismatch regions between $f'_1$, $g'_1$ in $U_1$ and $f'_2$, $g'_2$ in $U_2$ have the same size as their counterparts on the left diagram. However, the ones on the right have a large intersection. Therefore assuming that point samples in the regions are similar, $D_M(f,g)>D_M(f',g')$ while $D_m(f_1,g_1)$ and $D_m(f_2,g_2)$ have similar values to $D_m(f'_1,g'_1)$ and $D_m(f'_2,g'_2)$, respectively.}
    \label{fig:FarBoundary}
\end{figure}

We now explore in more detail the instability of the 
Mapper output that result from parameter choices. 
High instability suggests  that the Mapper output varies significantly with small variations of the input data.
In particular, it is not surprising that Mapper instability increases  if the decision boundaries $\partial (f_i)$ vary a lot
with slight changes in the input sample. However, it is hard to identify explicitly the situations that make the term $P \left( D_\partial(f^n, f) > \gamma \right)$ large.
To deal with this,  in Theorem \ref{thm:Bound_depending_on_cover_sampleSize_etc} we provide 
an upper bound for $P \left( D_\partial(f^n, f) > \gamma \right)$ in terms that more clearly depend on the choice of Mapper parameters. 
While in general this leads to a less sharp bound, we gain a greater insight into how these variables affect the instability of Mapper.

\begin{remark}\label{rmk:Conditions}
    To state Theorem \ref{thm:Bound_depending_on_cover_sampleSize_etc}, we make the following assumptions on the quality functions $Q^i_{n} \colon \mathcal{F}_n \times  U_i^n \longrightarrow \mathbb{R}$
    and $Q^i \colon \mathcal{F}^i \times  M_1(U_i) \longrightarrow \mathbb{R}$.
    \begin{enumerate}
        \item \label{item:Q_Assum:Unique_global_min}
        The  functions $Q^i_{n}$ and $Q^i$ have a unique global minimizer $f_i \in\mathcal{F}_n$, as we are assuming throughout the paper.
        \item \label{item:Q_Assum:Continuous}
        The  functions $Q^i_{n}$ are continuous, with respect to the topology on $\mathcal{F}_n\times U^n_i$ given by the metric $D_\partial$.
        \item \label{item:Q_Assum:Uniformly_consistent} 
        The functions $Q^i_n$ are uniformly consistent with the functions $Q^i$ in the sense that for every $i$ and every $\gamma>0$, $ Q^i_n(f_i^n,X) \xrightarrow[n \rightarrow \infty ]{} Q^i(f_i, P_i)$ 
        in probability, uniformly over all probability distributions $P_i \in M_1(U_i)$.
        That is
        $\forall \epsilon >0, \forall \delta>0, \exists N \in \mathbb{N}$ such that $\forall n \geq N, \forall P_i \in M_1(U_i)$, 
        \begin{equation}\label{eq:Uniform_consistency_Q^i}
            P_i \left( \lvert Q^i_{n_i}(f_i^n, X) - Q^i(f_i, P_i) \rvert > \epsilon \right) \leq \delta.
        \end{equation}
        Note that $N$ does not depend on $P_i$. For future
        reference, we denote by $N^i(\epsilon,\delta)$ the minimum of the set of the numbers $N$ for which condition 
        (\ref{eq:Uniform_consistency_Q^i}) is satisfied. 
        
        Ben-David showed that uniform consistency holds for the algorithm constructing the global minimum of the $K$-means objective function \cite{BenDavid07}.
        Similar results occur with the normalized cut used in spectral clustering \cite{vonLuxburgEtAl08}.
        For more on consistency of clustering algorithms, see \cite{vonLuxburgEtAl08}.
    \end{enumerate}
\end{remark}

The next proposition shows that for a large enough sample size, formula (\ref{eq:Uniform_consistency_Q^i})
guarantees that the minimal quality function and empirical minimal quality functions will be  close in the boundary metric.

\begin{proposition}\label{prop:MinFunctionLargeN}
    In addition to the assumptions above, let us also assume that
    \begin{itemize}
        \item
        each $U_i$ is compact and of nonzero mass; and
        \item
        for every $0< \eta <1$ and $\zeta > 0$ there is $N \in \mathbb{N}$ such that for all $n\geq N$,
        \begin{equation*}
            P_i(|Q^i(f^n_i,P_i)-Q^i_{n_i}(f^n_i, X)|\leq\zeta) \geq \eta,
        \end{equation*}
        for each $i=1,\dots,t$.
    \end{itemize}
    Then for each $\epsilon >0$,
    \begin{equation*}
        P(D_{\partial}(f_i^n,f_i) \leq \epsilon) \xrightarrow[n \rightarrow \infty ]{} 1.
    \end{equation*}

\end{proposition}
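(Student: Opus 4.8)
The plan is to reduce the statement to a standard ``well-separated minimiser'' argument: once we know that the true cost $Q^i(f_i^n,P_i)$ of the empirical minimiser converges in probability to the optimal cost $Q^i(f_i,P_i)$, the uniqueness of the global minimiser together with compactness will force $f_i^n$ into any prescribed $D_\partial$-neighbourhood of $f_i$ with probability tending to $1$. I would fix $i$ and work one bin at a time; since $U_i$ has nonzero mass, the law of large numbers gives $n_i/n \to P(U_i)>0$, so $n_i \to \infty$ almost surely and the points falling in $U_i$ are i.i.d.\ samples from $P_i$. This lets me apply the per-bin hypotheses and pass between statements phrased in terms of $P$ and $P_i$.

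First I would establish the value convergence $Q^i(f_i^n,P_i)\to Q^i(f_i,P_i)$. By the triangle inequality,
\begin{equation*}
    0 \leq Q^i(f_i^n,P_i) - Q^i(f_i,P_i) \leq \bigl| Q^i(f_i^n,P_i) - Q^i_{n_i}(f_i^n,X) \bigr| + \bigl| Q^i_{n_i}(f_i^n,X) - Q^i(f_i,P_i) \bigr|,
\end{equation*}
where the leftmost inequality holds because $f_i$ globally minimises $Q^i(-,P_i)$. The first summand on the right is $\leq \zeta$ with probability at least $\eta$ by the additional hypothesis of the proposition, and the second is controlled by the uniform consistency assumption (\ref{eq:Uniform_consistency_Q^i}) of Remark \ref{rmk:Conditions}. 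A union bound makes both small simultaneously with probability arbitrarily close to $1$, so that for any prescribed threshold $\alpha>0$ we obtain $P_i\bigl(Q^i(f_i^n,P_i)-Q^i(f_i,P_i) < \alpha\bigr) \to 1$.

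Next I would produce the separation constant. Fix $\epsilon>0$ and set
\begin{equation*}
    \alpha(\epsilon) = \inf\bigl\{\, Q^i(g,P_i) - Q^i(f_i,P_i) \;\big|\; g\in \mathcal{F}_\partial^i,\ D_\partial(g,f_i) \geq \epsilon \,\bigr\}.
\end{equation*}
By Proposition \ref{prop:Properties_of_D_boundary} part (\ref{propItem:Properties_of_D_boundary_relativeCompactness}) the space $\mathcal{F}_\partial^i$ is relatively compact under $D_\partial$, so $\{g : D_\partial(g,f_i)\geq \epsilon\}$ has compact closure; using lower semicontinuity of $Q^i(-,P_i)$ the infimum is attained at some $g^\ast$ with $D_\partial(g^\ast,f_i)\geq\epsilon$, hence $g^\ast\neq f_i$. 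Since $f_i$ is the \emph{unique} global minimiser, $Q^i(g^\ast,P_i) > Q^i(f_i,P_i)$ and therefore $\alpha(\epsilon)>0$. Combining with the previous paragraph, the event $\{Q^i(f_i^n,P_i)-Q^i(f_i,P_i) < \alpha(\epsilon)\}$ forces $D_\partial(f_i^n,f_i) < \epsilon$, and since that event has probability tending to $1$ we conclude $P(D_\partial(f_i^n,f_i)\leq \epsilon)\to 1$.

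The main obstacle is this separation step, and specifically the lower semicontinuity of the limiting cost $Q^i(-,P_i)$: the assumptions of Remark \ref{rmk:Conditions} only assert continuity of the empirical functionals $Q^i_{n_i}$, not of $Q^i$ itself. I would close this gap either by upgrading the consistency hypothesis to a genuine uniform convergence $Q^i_{n_i}\to Q^i$ over $\mathcal{F}_\partial^i$ (so that $Q^i$ inherits continuity as a uniform limit of continuous functions), or by arguing by contradiction: if the conclusion failed, a subsequence of minimisers would remain $\epsilon$-far from $f_i$, relative compactness would extract a cluster point $g^\ast\neq f_i$, and the value convergence would give $Q^i(g^\ast,P_i)=Q^i(f_i,P_i)$, contradicting uniqueness. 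Care is also needed to ensure $g^\ast$ lands in $\mathcal{F}_\partial^i$ rather than merely its closure, and to match the per-bin probabilities under $P_i$ with the sampling probability under $P$ appearing in the statement.
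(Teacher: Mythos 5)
Your proof follows essentially the same route as the paper's: value convergence of $Q^i(f_i^n,P_i)$ to $Q^i(f_i,P_i)$ via the triangle inequality, controlled by the uniform consistency assumption (\ref{eq:Uniform_consistency_Q^i}) together with the proposition's extra hypothesis, and then a separation argument converting value closeness into $D_\partial$-closeness of $f_i^n$ to the unique minimiser $f_i$. The only difference is that the paper obtains your separation constant $\alpha(\epsilon)$ by directly citing \cite[Proposition 3]{BenDavid_vonLuxburg08} rather than proving it, so the technical points you rightly flag (lower semicontinuity of $Q^i(-,P_i)$ and ensuring the cluster point lands in $\mathcal{F}_\partial^i$ rather than merely its closure) are precisely what that citation absorbs.
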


The second assumption in the proposition is similar to the uniform consistency assumption (\ref{eq:Uniform_consistency_Q^i}), in that it assumes for a large enough $n$ the functions $Q^i$ and $Q^i_{n_i}$ are similar, in the case of the proposition that the functions take similar values at $f_i^n$.

\begin{proof}
    By \cite[Proposition 3]{BenDavid_vonLuxburg08} (whose proof is the same under our conditions)
    for all $\epsilon \geq 0$ there is an $\xi \geq 0$, such that for each $g \in \mathcal{F}_{\partial}^i$,
    \begin{equation*}\label{eq:QiToPartial}
        |Q^i(g,P_i)-Q^i(f_i,P_i)|\leq \xi \Longrightarrow D_{\partial}(f_i^n,f_i)\leq \epsilon.
    \end{equation*}
    Hence by the triangle inequality
    \begin{align}\label{eq:TriangleProb}
        P_i(D_{\partial}(f^n_i,f_i)\leq \epsilon)
        & \geq P_i \left( \lvert Q^i(f^n_i,P_i)-Q^i(f_i,P_i) \rvert \leq \xi \right) \nonumber \\
        & \geq P_i \left( \lvert Q^i_{n_i}(f_i^n, X) - Q^i(f_i, P_i) \rvert + \lvert Q^i(f_i^n,P_i)-Q^i_{n_i}(f_i^n,X)\rvert \leq \xi \right) \nonumber \\
        & \geq P_i \left( \lvert Q^i_{n_i}(f_i^n, X) - Q^i(f_i, P_i) \rvert \leq \frac{\xi}{2} \text{ and } |Q^i(f_i^n,P_i)-Q^i_{n_i}(f_i^n,X)| \leq \frac{\xi}{2} \right)
        .
    \end{align}
    On the other hand from (\ref{eq:Uniform_consistency_Q^i}), since for any $0 \leq \delta < 1$ there is an $N\in \mathbb{N}$ such that for $n\geq N$,
    \[
         P_i \left( \lvert Q^i_{n_i}(f_i^n, X) - Q^i(f_i, P_i) \rvert > \zeta \right) \leq 1-\delta 
        \]
     which implies that 
        \[
         P_i \left( \lvert Q^i_{n_i}(f_i^n, X) - Q^i(f_i, P_i) \rvert \leq \zeta \right) \geq \delta.
    \]
    Therefore picking $\zeta=\frac{\xi}{2}$, combining with the second assumption in the proposition and (\ref{eq:TriangleProb}),
    since $U_i$ have nonzero mass we obtain that
    \begin{equation*}
        P_i(D_{\partial}(f_i^n,f_i) \leq \epsilon) \xrightarrow[n \rightarrow \infty ]{} 1.
    \end{equation*}
    The statement of the proposition now follows.
\end{proof}

To state Theorem \ref{thm:Bound_depending_on_cover_sampleSize_etc}, we now introduce the term $\iota(n)$,
which describes in probabilistic terms the dependence of the behaviour
of the Mapper function on the properties of the clusterings for each $U_i$.

\begin{definition}\label{def:iota}
    If $P \left( D_\partial(f_i^{n}, f_i) \leq \gamma \right) \neq 0$ for all $i=1,\dots,t$,
    denote by $\iota(n)$ the real number $\iota(n) \geq 0$ such that 
    \begin{equation}\label{eq:Prod_of_probabilitites}
        P \left( D_\partial(f^n, f) \leq \gamma \right) = 
        \iota(n) \prod_{i=1}^t P \left( D_\partial(f_i^{n}, f_i) \leq \gamma \right).
    \end{equation}
    If $P \left( D_\partial(f_i^{n}, f_i) \leq \gamma \right) = 0$ for some $i=1,\dots,t$, then define $\iota(n) = 1$.
\end{definition}

The relationship between $\iota(n)$ and $n$ is not necessarily monotone. To see this, recall that, as stated in (\ref{eq:D_boundary_as_max}), for any $g, h \in \mathcal N_{\partial}$,
we have that 
\[
D_\partial (g, h) = \max_{i=1,\dots,t} D_\partial (g_i, h_i).
\]
For example, if for some $i=1,\dots ,t$, $P \left( D_\partial(f_i^{n}, f_i) \leq \gamma \right)$
decreases at a slower rate than the others with respect to $n$,
then the value of $\iota(n)$ will rise.
Under the assumptions of Proposition \ref{prop:MinFunctionLargeN}, we have that
$P(D_{\partial}(f_i^n,f_i) \leq \epsilon) \xrightarrow[n \rightarrow \infty ]{} 1$, implying that
\begin{equation}\label{eq:i(n)Limit}
    \iota(n) \xrightarrow[n \rightarrow \infty ]{} 1,
\end{equation}
so the behaviour of $\iota(n)$ for a large enough $n$ is determined.

\begin{remark}\label{rmk:DependceOfBoundarys}
    It is however not clear what range of values $\iota(n)$ may take.
    Intuitively given a large enough point sample $X\in \mathcal{X}^n$, if $D_\partial(f_i^{n}, f_i) \leq \gamma$ for some $i=1,\dots,t$, this would indicate that the sample well represented the underlying probability distribution $P_i$ on $U_i$. 
    So the subset of the point sample contained in another bin $U_j$ intersecting $U_i$ would be more likely to well represent $P_j$.
    This in turn should result in a lower value of $D_\partial(f_j^{n}, f_j) \leq \gamma$.
    More precisely for each $i=1,\dots,t$, we would expect that
    \begin{equation*}
        P(D_\partial(f_j^{n}, f_j) \leq \gamma\;|\;D_\partial(f_i^{n}, f_i) \leq \gamma)
        \geq P(D_\partial(f_j^{n}, f_j) \leq \gamma).
    \end{equation*}
    In this case, since the event $D_\partial(f^n, f)\leq \gamma$ is the intersection of events  $D_\partial(f^n_i, f_i)\leq \gamma$, using conditional probability we obtain that
    \begin{equation*}
        P \left( D_\partial(f^n, f) \leq \gamma \right) \geq
        \prod_{i=1}^t P \left( D_\partial(f_i^{n}, f_i) \leq \gamma \right).
    \end{equation*}
    In particular by Defintion \ref{def:iota}, this implies that
    \begin{equation*}
        \iota(n)\geq 1.
    \end{equation*}
    It then follows from (\ref{eq:i(n)Limit}), that $\iota(n)$ is minimised as $n$ grows.
    To make these points more precise we would require more information, especially regarding the properties of the clustering functions. 
\end{remark}

To find an upper bound on the term 
$P(D_\partial(f^n, f) > \gamma)$ of Theorem \ref{thm:Bound_depending_mostly_on_gamma},
we use properties of the cluster quality function $Q^i(-,P_i)$ in a neighbourhood of the global minimum $f_i$.
Assuming that each $U_i$ is compact, by \cite[Proposition 3]{BenDavid_vonLuxburg08} (whose proof is the same under our conditions) for every $\gamma>0$ and every $i=1,\dots,t$, there exists $\epsilon > 0$ such that for all 
$g\in \mathcal{N}$, written as  $g = \prod_{i=1}^t g_i$, 
the condition that
\[
|Q^i(g_i, P_i) - Q^i(f_i, P_i)| \leq \epsilon
\]
for all $i=1, \dots, t$ implies that
\[
 D_\partial(g, f) \leq \gamma.
\]

Let us denote by $S_{P_i}^{Q^i}(\gamma)$ the supremum of the 
set of all such $\epsilon$. 
See Figure \ref{fig_uniquenessOfMinimizers} for an illustration of what $S_{P_i}^{Q^i}(\gamma)$ represents.

\begin{figure}[ht!]
    \centering
    \def\svgwidth{1.\textwidth}
    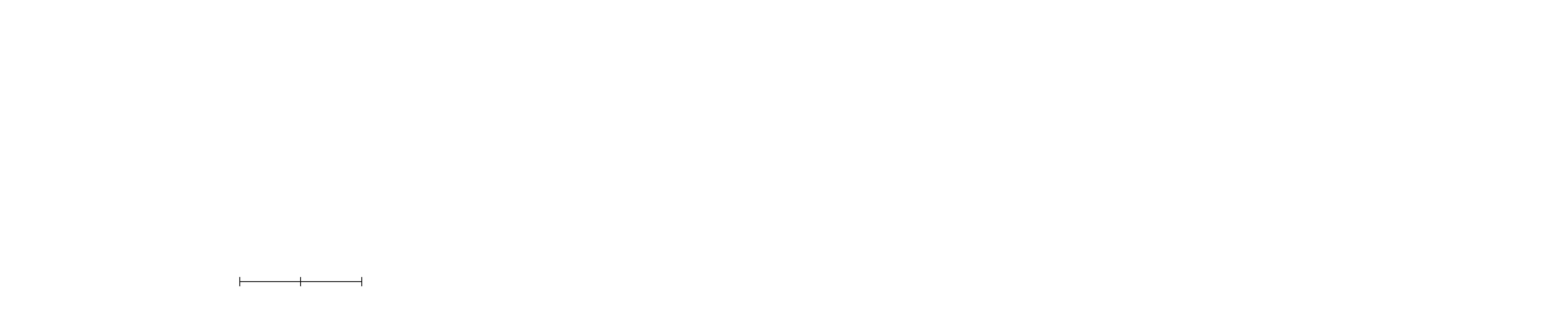
    \caption{$S_{P_i}^{Q^i}(\gamma)$ measures how distinctly unique the global minimum of $Q^i(-, P_i)$ is, by looking at a ball of radius $\gamma$ around the minimum of $Q^i(-,P_i)$. In the illustration, we identify $(\mathcal{F}^i, D_\partial)$ with a subset of the reals $(\mathbb{R}, d_\text{Euclidean})$. The quality function on the left has a very distinct global minimum, and hence a large $S_{P_i}^{Q^i}(\gamma)$. The other two functions exhibit different ways in which points can have values close to the global minimum and hence have a small $S_{P_i}^{Q^i}(\gamma)$.}
    \label{fig_uniquenessOfMinimizers}
\end{figure}

We can now express an upper bound on instability which involves, among others, the mass of the bins that form the cover of $\mathcal{X}$.

\begin{theorem}\label{thm:Bound_depending_on_cover_sampleSize_etc}
    Fix a sample size $n$, given the assumptions and notations presented at the beginning of the section, Remark  \ref{rmk:Conditions}
    and that each $U_i$ is a compact subset of $\mathbb{R}^a$.
    Then, for all $\gamma>0, \delta>0$, the instability of the Mapper algorithm satisfies
    \begin{equation}\label{eq:Upper_bound_phi}
        \text{\rm{InStab}}_\text{\rm{Mapper}}(\left\lbrace Q^i_{n_i} \right\rbrace _{i=1}^t,n,P) \leq 2P(T_\gamma(f)) + 2 \phi,
    \end{equation}
    where $\phi \in [0, 1]$ has the form
    
    \begin{equation*}\label{eq:phi}
        \phi = 1 - \iota(n) \left(1 - \delta \right)^t \prod_{i=1}^t P(U_i) \cdot P \left( n_i \geq N^i(S_{P_i}^{Q^i}(\gamma), \delta) \right)
    \end{equation*}
    and the function $N^i$ is defined in part (3) of Remark \ref{rmk:Conditions}.
\end{theorem}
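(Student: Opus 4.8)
The plan is to start from the bound of Theorem \ref{thm:Bound_depending_mostly_on_gamma} and to replace the term $P(D_\partial(f^n,f)>\gamma)$ by the more explicit quantity $\phi$, discarding the term $P(n_i=0)$ as justified in Remark \ref{rmk:Reasons_which_produce_instability_1}. Thus it suffices to establish the lower bound
\[
P\left(D_\partial(f^n,f)\le \gamma\right)\ \ge\ \iota(n)(1-\delta)^t\prod_{i=1}^t P(U_i)\cdot P\left(n_i\ge N^i(S_{P_i}^{Q^i}(\gamma),\delta)\right),
\]
since $P(D_\partial(f^n,f)>\gamma)=1-P(D_\partial(f^n,f)\le\gamma)$, after which (\ref{eq:Upper_bound_phi}) follows by feeding this into Theorem \ref{thm:Bound_depending_mostly_on_gamma}. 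First I would use the product formula: by (\ref{eq:D_boundary_as_max}) the event $\{D_\partial(f^n,f)\le\gamma\}$ is the intersection of the events $\{D_\partial(f_i^n,f_i)\le\gamma\}$, so Definition \ref{def:iota} gives $P(D_\partial(f^n,f)\le\gamma)=\iota(n)\prod_{i=1}^t P(D_\partial(f_i^n,f_i)\le\gamma)$. This reduces the problem to a per-bin lower bound on each factor $P(D_\partial(f_i^n,f_i)\le\gamma)$.

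Fixing an index $i$, I would bound $P(D_\partial(f_i^n,f_i)\le\gamma)$ from below by conditioning on the number $n_i$ of sample points falling in $U_i$. Conditioned on $n_i=m$, these $m$ points are i.i.d.\ with law $P_i$, so the uniform consistency assumption (\ref{eq:Uniform_consistency_Q^i}) applies: whenever $m\ge N^i(S_{P_i}^{Q^i}(\gamma),\delta)$, with $P_i$-probability at least $1-\delta$ one has $|Q_{n_i}^i(f_i^n,X)-Q^i(f_i,P_i)|\le S_{P_i}^{Q^i}(\gamma)$. By the defining property of $S_{P_i}^{Q^i}(\gamma)$ (the instance of \cite[Proposition 3]{BenDavid_vonLuxburg08} recalled just before the theorem), this closeness of quality values forces $D_\partial(f_i^n,f_i)\le\gamma$. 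Summing the conditional estimate over $m\ge N^i(S_{P_i}^{Q^i}(\gamma),\delta)$ yields
\[
P\left(D_\partial(f_i^n,f_i)\le\gamma\right)\ \ge\ (1-\delta)\,P\left(n_i\ge N^i(S_{P_i}^{Q^i}(\gamma),\delta)\right),
\]
and since $P(U_i)\le 1$ this in particular dominates $(1-\delta)\,P(U_i)\,P(n_i\ge N^i(S_{P_i}^{Q^i}(\gamma),\delta))$, the per-bin factor we need. Multiplying these per-bin bounds over $i=1,\dots,t$ and reinserting the factor $\iota(n)$ from Definition \ref{def:iota} reproduces exactly the claimed lower bound on $P(D_\partial(f^n,f)\le\gamma)$, hence $P(D_\partial(f^n,f)>\gamma)\le\phi$ with $\phi\in[0,1]$; substituting into Theorem \ref{thm:Bound_depending_mostly_on_gamma} gives (\ref{eq:Upper_bound_phi}). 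Compactness of each $U_i$ enters only to guarantee, via the cited Ben-David--von Luxburg result together with the relative compactness from Proposition \ref{prop:Properties_of_D_boundary}(\ref{propItem:Properties_of_D_boundary_relativeCompactness}), that $S_{P_i}^{Q^i}(\gamma)$ is a genuine positive number for every $\gamma>0$.

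The main obstacle I anticipate is the bridge in the per-bin step between the \emph{empirical} quality $Q_{n_i}^i(f_i^n,X)$, which is what uniform consistency (\ref{eq:Uniform_consistency_Q^i}) controls, and the \emph{population} quality $Q^i(f_i^n,P_i)$, which is what the definition of $S_{P_i}^{Q^i}(\gamma)$ actually requires in order to conclude $D_\partial(f_i^n,f_i)\le\gamma$. Closing this gap is precisely the content of the extra hypothesis used in Proposition \ref{prop:MinFunctionLargeN}, namely that $|Q^i(f_i^n,P_i)-Q_{n_i}^i(f_i^n,X)|$ is small with high probability for large $n$; I would either invoke that hypothesis, splitting by the triangle inequality exactly as in the proof of Proposition \ref{prop:MinFunctionLargeN} and absorbing the extra loss into $\delta$, or argue that the two quality values are interchangeable for $n_i\ge N^i(S_{P_i}^{Q^i}(\gamma),\delta)$ so that the conditional estimate $\ge 1-\delta$ is preserved. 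A secondary point requiring care is that the events $\{n_i\ge N^i(S_{P_i}^{Q^i}(\gamma),\delta)\}$ for different bins are dependent because the bins overlap; this dependence is exactly what is packaged into $\iota(n)$, so no independence across bins is actually needed once the factorisation of Definition \ref{def:iota} is used.
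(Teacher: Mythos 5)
Your proposal is correct and follows essentially the same route as the paper: starting from Theorem \ref{thm:Bound_depending_mostly_on_gamma}, lower-bounding $P(D_\partial(f^n,f)\le\gamma)$ via the $\iota(n)$ factorisation of Definition \ref{def:iota}, and then bounding each factor by conditioning on $n_i\ge N^i(S_{P_i}^{Q^i}(\gamma),\delta)$ and invoking uniform consistency together with the defining property of $S_{P_i}^{Q^i}(\gamma)$. The two small divergences are both to your credit: your per-bin bound $(1-\delta)\,P(n_i\ge N^i(S_{P_i}^{Q^i}(\gamma),\delta))$ is cleaner than the paper's insertion of the factor $P(U_i)$ (extracted from the relation between $P$ and $P_i$ in (\ref{eq:Restricted_probability}), and serving only to weaken the bound, as you observe), and the empirical-versus-population quality gap you flag as the main obstacle is real --- the paper's own proof silently substitutes $Q^i_{n_i}(f_i^n,X)$ for $Q^i(f_i^n,P_i)$ when applying the definition of $S_{P_i}^{Q^i}(\gamma)$, so your plan to close it with the extra hypothesis of Proposition \ref{prop:MinFunctionLargeN} and a triangle-inequality split is an honest improvement on the argument as written.
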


\begin{proof}
    Fix $\gamma>0$ and some $\delta>0$. 
    We first find a lower bound for $P(D_{\partial}(f^n,f)\leq \gamma)$.
    This yields the upper bound $\phi$ for the term $P \left( D_\partial(f_n, f) > \gamma \right)$ of 
    Theorem \ref{thm:Bound_depending_mostly_on_gamma}, from which we will conclude that
    \begin{equation*}
        \text{\rm{InStab}}_\text{\rm{Mapper}}(\left\lbrace Q^i_{n_i}\right\rbrace _{i=1}^t,n,P) \leq 2P(T_\gamma(f)) + 2 \phi.
    \end{equation*}
    We denote by $n_i \geq N^i(S_{P_i}^{Q^i}(\gamma ), \delta )$ the event consisting of picking $X\in \mathcal{X}^n$ according to $P^n$ such that $N^i(S_{P_i}^{Q^i}(\gamma ), \delta )$ is greater than $n_i$.
    Since for any events $A$ and $B$, $P(A) \geq P (A \cap B)$, 
    for each $i=1,\dots,t$:
    \begin{equation}\label{eq:Adding_ni_Ci}
        P \left(  D_\partial(f_i^{n}, f_i) \leq \gamma \right) \geq
        P \left( \left\lbrace D_\partial(f_i^{n}, f_i) \leq \gamma\right\rbrace  \cap \left\lbrace n_i \geq N^i(S_{P_i}^{Q^i}(\gamma), \delta)\right\rbrace  \right).
    \end{equation}
    By conditional probability, $P \left( A \cap B \right) = P \left( A \;\middle|\; B \right) P(B)$ for any events $A$ and $B$.
    In particular, the expression on the 
    right hand side of (\ref{eq:Adding_ni_Ci})
    is equal to
    \begin{equation}\label{eq:2multiplicands}
        P \left(  D_\partial(f_i^{n}, f_i) \leq \gamma  \;\middle|\;  n_i \geq N^i(S_{P_i}^{Q^i}(\gamma), \delta)   \right) 
        \cdot
        P \left( n_i \geq N^i(S_{P_i}^{Q^i}(\gamma), \delta) \right).
    \end{equation}
    We now find a lower bound for the left multiplicand in (\ref{eq:2multiplicands}).
    By definition of $S_{P_i}^{Q^i}(\gamma)$,
    \begin{equation*}
        \lvert Q^i(f_i^{n}, P_i) - Q^i(f_i, P_i) \rvert \leq S_{P_i}^{Q^i} (\gamma)
        \Longrightarrow D_\partial(f_i^{n}, f_i) \leq \gamma.
    \end{equation*}
    Hence,
    $P_i \left(  D_\partial(f_i^{n}, f_i) \leq \gamma  \;\middle|\; n_i \geq N^i(S_{P_i}^{Q^i}(\gamma), \delta) \right) $ is bounded from below by
    \begin{equation}\label{eq:Q-Qconditional}
        P_i \left( \lvert Q^i_{n_i}(f_i^{n}, X) - Q^i(f_i, P_i) \rvert \leq S_{P_i}^{Q^i} (\gamma)  \;\middle|\; n_i \geq N^i(S_{P_i}^{Q^i}(\gamma), \delta) \right),
    \end{equation}
    Additionally, by definition of $N^i(S_{P_i}^{Q^i}(\gamma), \delta) $, if 
    $n_i \geq N^i(S_{P_i}^{Q^i}(\gamma), \delta)$
    then 
    \begin{equation*}
         P_i \left( \lvert Q^i(f_i^{n}, P_i) - Q^i(f_i, P_i) \rvert \leq S_{P_i}^{Q^i} (\gamma) \right) \geq 1 - \delta,
    \end{equation*}
    and therefore, the expression in (\ref{eq:Q-Qconditional}) is bounded below by $1 - \delta$.
    Hence by (\ref{eq:Restricted_probability}) the left factor in (\ref{eq:2multiplicands}) is bounded below by
    \begin{equation*}
        \left(1 - \delta \right) \cdot P(U_i).
    \end{equation*}
    This provides the following lower bound for the full expression in (\ref{eq:2multiplicands}):

    \begin{equation*}
        \left(1 - \delta \right) \cdot P(U_i )  
        \cdot
        P \left( n_i \geq N^i(S_{P_i}^{Q^i}(\gamma), \delta) \right),
    \end{equation*}
    and hence, using (\ref{eq:Prod_of_probabilitites}), the following lower bound for $P \left(  D_\partial(f_i^{n}, f_i) \leq \gamma \right)$:
    \begin{equation}\label{eq:1_minus_phi}
        \iota(n) \left(1 - \delta \right)^t \prod_{i=1}^t P(U_i) \cdot P \left( n_i \geq N^i(S_{P_i}^{Q^i}(\gamma), \delta) \right).
    \end{equation}
    If we define $\phi$ so that $1-\phi$ is the expression in (\ref{eq:1_minus_phi}), then,
    \begin{equation*}
    P \left( D_\partial(f^n, f) > \gamma \right) \leq \phi,
    \end{equation*}
    which combined with Theorem \ref{thm:Bound_depending_mostly_on_gamma}, provides us with (\ref{eq:Upper_bound_phi}).
    
    Finally we show that $\phi \in [0, 1]$.
    First note that
    $\phi \geq P \left( D_\partial(f_n, f) > \gamma \right)\geq 0$.
    On the other hand, the terms 
    $\iota(n), \left(1 - \delta \right), P(U_i)$ and $P \left( n_i \geq N^i(S_{P_i}^{Q^i}(\gamma), \delta) \right)$ are non-negative.
    Therefore (\ref{eq:1_minus_phi}) is non-negative.
    Subtracting this expression from 1 yields a result no larger than 1 and this is $\phi$ by definition.
\end{proof}

We now discuss the consequences of Theorem \ref{thm:Bound_depending_on_cover_sampleSize_etc} on the instability of Mapper.

\begin{remark}\label{rmk:Reasons_which_produce_instability_2}
    \emph{\textbf{(Reasons for instability - Part II)}}
    Theorem \ref{thm:Bound_depending_mostly_on_gamma} revealed that a large mass $P \left(T_\gamma (f) \right)$ around the minimizer $f$ of a Mapper quality function corresponded to an unstable Mapper output.
    Assuming the mass $P \left(T_\gamma (f) \right)$ to be small, a closer look at the term  $\phi$ introduced in 
    Theorem \ref{thm:Bound_depending_on_cover_sampleSize_etc} reveals the following additional reasons for the  instability of a Mapper algorithm. 
    \begin{enumerate}
        \item[(A)] \label{item:SmallSampleSize}
        A small sample size $n$ makes $\phi$ large.
        The term $P \left( n_i \geq N^i(S_{P_i}^{Q^i}(\gamma), \delta) \right)$
        decreases as $n$ increases.
        While $\iota(n)$ need not decrease monotonically with $n$, we know
        by (\ref{eq:i(n)Limit}) it does 
        tend to $1$ when $n$ tends to infinity,
        under the assumptions of Proposition \ref{prop:MinFunctionLargeN}.
        So the term $\iota(n)$ can be ignored for a large sample, possibly even minimal using the reasoning of Remark \ref{rmk:DependceOfBoundarys}. 
        Therefore for a large enough sample size, $\phi$ becomes small.
        \item[(B)] \label{item:Small_PhUi}
        A small value of $P(U_i)$ for some $i=1,\dots ,t$, makes $\phi$ large, \emph{i.e.,} close to $1$.
        \item[(C)] \label{item:SmallDistinctiveUniqueness}
     If a clustering quality function $Q^i$ has many points with values very near the global minimum then  $\phi$ is close to $1$. Indeed, 
        if there are many local minima of $Q^i(-,P_i)$, $g_i \in \mathcal{F}^i$ such that $|Q^i(g_i, P_i) - Q^i(f_i, P_i)|$ is small, for the given  global minimizer $f_i$ of $Q^i(-, P_i)$, then $S_{P_i}^{Q^i}(\gamma)$ is small (see Figure \ref{fig_uniquenessOfMinimizers} for an illustration), making $N^i(S_{P_i}^{Q^i}(\gamma), \delta)$ large, and in consequence, making $\phi$ large too.
    \end{enumerate}
\end{remark}

The above points add to the reasons for instability presented in Remark \ref{rmk:Reasons_which_produce_instability_1}.
The conditions given  in (A) and (B) can be seen as the global and local versions, respectively, of a similar phenomenon, since a small $P(U_i)$ means that the proportion of sampled points from $\mathcal{X}$ that fall into a $U_i$ is likely to be small.

The chosen clustering method and metric play a crucial role in the causes for instability of Remark \ref{rmk:Reasons_which_produce_instability_1} and in cause (C) in Remark \ref{rmk:Reasons_which_produce_instability_2}.
Among all the parameters selected for the classical Mapper algorithm given by a filter function and interval cover of $\mathbb{R}$,
the weight $P(U_i)$ in (B) depends only on the cover $\left\lbrace I_i\right\rbrace_{i=1}^t$ of $\mathbb{R}$ and the filter function $h \colon \mathcal{X} \longrightarrow \mathbb{R}$.
Hence, by choosing suitable $\left\lbrace I_i\right\rbrace_{i=1}^t$ and $h$, we would be able to control the value of $P(U_i)$, providing we have sufficient information of the 
distribution $P$.

Finally, notice that if (C)
applies, this may produce not only instability but also inaccuracy. This would arise in situations when the global minimum is not distinct enough, which leads to a possible error in finding the minimizer. This  is often a sign of a mismatch between the model and the data \cite{Shamir-Tishby10}.


\section{On the sharpness of bounds on instability}\label{sec:Sharpness_of_upper_bound}

In the previous section, we proved two theorems describing 
upper bounds on the instability of Mapper in terms of the behaviour of the Mapper parameters necessary to produce an output from some given data. In this Section, we discuss the 
efficiency of these estimates. 
To get a feel for the problem, let us first 
address the obvious question of the possible range 
of values for instability and its upper bounds. 
Let 
\begin{equation*}
    Bound_{D_\partial} = 2P(T_\gamma(f)) + 2P \left( D_\partial(f^n, f) > \gamma \right) + 2P(n_i = 0),
\end{equation*}
denote the bound from  Theorem \ref{thm:Bound_depending_mostly_on_gamma}.
As previously stated in Remark \ref{rmk:Reasons_which_produce_instability_1} we omit the summand $2P(n_i = 0)$ form the remainder of the discussion for reasonable Mapper parameters and sample size it will not substantially effect the bound.  
Let us also denote by
\begin{equation*}
    Bound_\phi = 2P(T_\gamma(f)) + 2 \phi,
\end{equation*}
 the bound from Theorem \ref{thm:Bound_depending_on_cover_sampleSize_etc}.
Fix all parameters except $\gamma>0$ and $\delta \in (0,1)$.
Since 
$P(T_\gamma(f)),$ $P \left( D_\partial(f_n, f) > \gamma \right), \phi \in [0,1]$, we have that
\begin{equation*}
    Bound_{D_\partial}, Bound_\phi \in [0, 4].
\end{equation*}
In contrast, the instability is by definition an expectation over the image of $D_{M}$ and
$D_{M}(f, g) \in [0,1]$ for any $f, g \in \mathcal{N}_n$ (see Definition \ref{def:D_Mapper}), so 
\begin{equation*}
    \text{\rm{InStab}}_\text{\rm{Mapper}}(\left\lbrace Q^i_{n_i}\right\rbrace _{i=1}^t,n, P) \in [0, 1].
\end{equation*}
This shows that the choice of specific values of the 
parameters $\gamma$ and $\delta$ is crucial if we want 
to be able to control the value of instability,
it particularly important to be able to obtain
\begin{equation*}
    \inf_{\gamma>0} Bound_{D_\partial}, 
    \hspace{8mm}
    \text{and}
    \hspace{8mm}
    \inf_{\gamma>0, \delta\in(0,1)} Bound_{\phi}.
\end{equation*}
In the remainder of the section,
we discuss choices of  parameters $\gamma$ and $\delta$
for which tight bounds are attained.

\begin{remark}\label{rmk:Effect_of_varying_gamma_delta} 
    We can make the following simple observation about varying $\gamma$ and $\delta$.
    \begin{enumerate}
        \item 
        As $\gamma$ increases, $P\left(T_\gamma(f)\right)$ increases and $P \left( D_\partial(f^n, f) > \gamma \right)$ decreases. 
        \item
        Analogously, as $\gamma$ increases,
        each $S_{P_i}^{Q^i}(\gamma)$ increases, forcing  
        $N^i(S_{P_i}^{Q^i}(\gamma), \delta)$ to increase, with the overall effect of making $\phi$ smaller.
        However, increasing $\gamma$ also increases $P\left(T_\gamma(f) \right)$.
        \item
        Similarly, as $\delta$ grows, each $N^i(S_{P_i}^{Q^i}(\gamma), \delta)$ decreases, which diminishes the value of $\phi$. 
        However, when  $\delta$ grows, the 
        value of $\left( 1 - \delta \right)$ gets smaller, which increases the value of $\phi$.
        \end{enumerate}
\end{remark}

From Remark \ref{rmk:Effect_of_varying_gamma_delta}, we see that in general there is no straightforward way to identify optimal values of $\gamma$ and $\delta$.
However, the following Corollary of Theorems \ref{thm:Bound_depending_mostly_on_gamma} and \ref{thm:Bound_depending_on_cover_sampleSize_etc} shows 
that to obtain useful boundaries we need to consider small 
values of $\gamma$. 

\begin{corollary}\label{cor:BoundedX_implies_bound_over_1}
    If $\mathcal{X}$ is bounded then there exists some $\Gamma>0$ such that for $\gamma \geq  \Gamma$, we have
    \begin{equation*}
        1 \leq Bound_{D_\partial}(\gamma) \leq Bound_\phi (\gamma, \delta),
    \end{equation*}
    for all $\delta\in(0,1).$
\end{corollary}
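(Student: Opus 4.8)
The plan is to prove the two inequalities separately. The right-hand inequality $Bound_{D_\partial}(\gamma) \leq Bound_\phi(\gamma,\delta)$ is essentially already contained in the proof of Theorem \ref{thm:Bound_depending_on_cover_sampleSize_etc}, whereas the left-hand inequality $1 \leq Bound_{D_\partial}(\gamma)$ will be extracted from the boundedness of $\mathcal{X}$, which forces the tube mass $P(T_\gamma(f))$ to equal $1$ once $\gamma$ is large.

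First I would dispose of the second inequality. Both $Bound_{D_\partial}(\gamma) = 2P(T_\gamma(f)) + 2P\!\left(D_\partial(f^n,f) > \gamma\right)$ and $Bound_\phi(\gamma,\delta) = 2P(T_\gamma(f)) + 2\phi$ contain the common summand $2P(T_\gamma(f))$, so after cancellation the claim reduces to $P\!\left(D_\partial(f^n,f) > \gamma\right) \leq \phi$. But this is precisely the inequality established in the course of proving Theorem \ref{thm:Bound_depending_on_cover_sampleSize_etc}, where $\phi$ was defined so that $1-\phi$ is a lower bound for $P\!\left(D_\partial(f^n,f) \leq \gamma\right)$; moreover it holds for every $\gamma>0$ and every $\delta\in(0,1)$. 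Hence the right-hand inequality needs no new work and in particular holds for all $\gamma \geq \Gamma$ and all admissible $\delta$.

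Next I would prove $1 \leq Bound_{D_\partial}(\gamma)$ for large $\gamma$. The crucial observation is that under the standing assumptions each $U_i$ is connected, bounded and satisfies $U_i \neq \mathcal{X}$, so by Remark \ref{rmk:ProperConnectedCover} we have $\partial(f_i) \neq \emptyset$ for every $i$. Fixing any $y \in \partial(f_i) \subseteq \overline{U_i}$, the metric boundary condition gives, for every $x \in U_i$, that $d(x,\partial(f_i)) \leq d(x,y) \leq \operatorname{diam}(\overline{U_i}) = \operatorname{diam}(U_i)$. Setting $\Gamma = \max_{i=1,\dots,t} \operatorname{diam}(U_i)$, which is finite since the cover is finite and each bin is bounded, we obtain $T_\gamma(f_i) = U_i$ whenever $\gamma \geq \Gamma$. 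Invoking the identity $T_\gamma(f) = \bigcup_{i=1}^t T_\gamma(f_i)$ from (\ref{eq:tube_as_union}), this yields $T_\gamma(f) = \bigcup_{i=1}^t U_i = \mathcal{X}$, and therefore $P(T_\gamma(f)) = 1$ for all $\gamma \geq \Gamma$. Consequently $Bound_{D_\partial}(\gamma) = 2 + 2P\!\left(D_\partial(f^n,f) > \gamma\right) \geq 2 \geq 1$, which is in fact stronger than required, and combining with the first paragraph completes the chain $1 \leq Bound_{D_\partial}(\gamma) \leq Bound_\phi(\gamma,\delta)$.

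The one delicate point, and the main obstacle albeit a mild one, is the justification that $\partial(f_i) \neq \emptyset$, which is exactly what lets the $\gamma$-tube swallow all of each bin. This is where the geometric hypotheses are indispensable: a connected bin cannot have an empty decision boundary unless it equals $\mathcal{X}$ and carries a single cluster, a configuration excluded by the assumption $U_i \neq \mathcal{X}$. Were $\partial(f_i)$ permitted to be empty, $T_\gamma(f_i)$ would be empty for every finite $\gamma$ and the whole argument would collapse, so I would flag the appeal to Remark \ref{rmk:ProperConnectedCover} as the essential step rather than the routine diameter estimate.
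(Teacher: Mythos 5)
Your proof is correct and follows the same route as the paper: the right-hand inequality is exactly the bound $P\left(D_\partial(f^n,f)>\gamma\right)\leq\phi$ extracted from the proof of Theorem \ref{thm:Bound_depending_on_cover_sampleSize_etc}, and the left-hand inequality comes from boundedness forcing the tube mass to be large for large $\gamma$. In fact you supply the detail the paper leaves implicit --- the paper only asserts that $P(T_\gamma(f))\geq\tfrac12$ for some $\gamma$, whereas you justify this via the nonemptiness of each $\partial(f_i)$ (Remark \ref{rmk:ProperConnectedCover}) and the explicit choice $\Gamma=\max_i\operatorname{diam}(U_i)$, obtaining the stronger conclusion $P(T_\gamma(f))=1$.
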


\begin{proof}
    If $\mathcal{X}$ is bounded, then there is some $\gamma>0$ such that
    $P \left(T_\gamma(f) \right) \geq \frac{1}{2}$, hence the corollary follows from Theorem \ref{thm:Bound_depending_mostly_on_gamma} and \ref{thm:Bound_depending_on_cover_sampleSize_etc}.
\end{proof}

Since $\text{\rm{InStab}}_\text{\rm{Mapper}}(\left\lbrace Q^i_{n_i}\right\rbrace _{i=1}^t, n, P) \leq 1$, an upper bound above $1$ gives no information.
A consequence of Corollary \ref{cor:BoundedX_implies_bound_over_1} is that large values of $\gamma$ produce such large bounds. 
In the next theorem we show that under reasonable conditions selecting suitable $\gamma>0$, $\phi >0$ with a large enough $n$, make $Bound_{D_\partial}$ arbitrarily close to 0 and therefore to the instability.
In particular theorem includes clustering instability.

\begin{definition}
    We call the pair $(P,Q)$ consisting of a probability measure on metric space $(\mathcal{X},D)$ and a clustering quality function $Q$ on point samples $X\in \mathcal{X}^n$, a \emph{proper pair}
    if all decision boundaries of the clustering function in the image of $C$ (the associated clustering function of $Q$, see (\ref{def:OptimalCluster})) are of zero mass with respect to $P$. 
\end{definition}

In most applications a proper pair would be expected.
For example this is the case on subsets of $\mathbb{R}^a$, if the probability measure is obtained form a continuous probability distribution and the boundaries of $f_i$ are possibly empty finite unions of Jordan arcs. 

\begin{remark}\label{rmk:ProperPair}
    In particular a proper pair implies that for $\gamma>0$ and optimal clustering function $f$, tube $T_{\gamma}(f)$ may be of arbitrarily small mass.
    The clustering boundary $\partial f$ is by definition a finite union of boundaries, hence nowhere dense.
    Therefore if a nonzero lower bound existed on $P(T_{\gamma}(f))$,
    then for any sequence $\gamma_j$ such that $\gamma_j\to 0$ as $j \to \infty$,
    we would have that $\partial f =\bigcap_{j\in \mathbb{N}}T_{\gamma_j}(f)$ is of nonzero mass.
\end{remark}

\begin{theorem}\label{thm:MapperStability}
    Given the assumptions of Remark \ref{rmk:Conditions}, Proposition \ref{prop:MinFunctionLargeN}
    and that each $U_i$ is a bounded, connected, not all of $\mathcal{X}$
    and that each $(P_i,Q^i)$ is a proper pair on $(\mathcal{X},D)$.
    Then for each $1 > \epsilon >0$, there is a $\gamma>0$ and $N\in\mathbb{N}$, such that for $n\geq N$ the instability of the Mapper algorithm satisfies
    \begin{equation}\label{eq:FullInequlity}
        0 \leq \text{\rm{InStab}}_\text{\rm{Mapper}}(\left\lbrace Q^i_{n_i}\right\rbrace _{i=1}^t,n, P) \leq Bound_{D_\partial}(\gamma) \leq \epsilon.
    \end{equation}
    If we remove the condition that each $U_i$ has nonzero mass and weaken  boundedness of $U_i$ to the assumption of bounded support of $P$ and on subset of $\mathbb{R}^a$ removing the compactness assumption,
     we obtain that
    \begin{equation*}
        \text{\rm{InStab}}_\text{\rm{Mapper}}(\left\lbrace Q^i_{n_i}\right\rbrace _{i=1}^t,n, P) \xrightarrow[n \rightarrow \infty ]{} 0.
    \end{equation*}
    In particular for clustering instability $\text{\rm{InStab}}_\text{\rm{Clustering}}( Q_{n}, P)$, when $t=1$ and $U_1=\mathcal{X}$, if there are always at least two clusters, we retain the same result.
\end{theorem}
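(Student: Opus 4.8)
The plan is to drive to zero, one at a time, the three summands of the bound $Bound_{D_\partial}(\gamma)=2P(T_\gamma(f))+2P(D_\partial(f^n,f)>\gamma)+2P(n_i=0)$ supplied by Theorem \ref{thm:Bound_depending_mostly_on_gamma}, being careful to fix $\gamma$ \emph{before} letting $n\to\infty$; it then suffices to make each of the three probabilities at most $\epsilon/6$. First I would control the purely geometric term $P(T_\gamma(f))$, which does not depend on $n$. This is exactly where the proper-pair hypothesis enters: by Remark \ref{rmk:ProperPair} the boundary $\partial(f)$ has zero $P$-mass, and since $\partial(f)=\bigcap_{\gamma>0}T_\gamma(f)$ is a nested intersection of closed sets of finite measure, continuity of measure from above gives $P(T_\gamma(f))\to 0$ as $\gamma\to 0$. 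So I fix, once and for all, a $\gamma>0$ with $P(T_\gamma(f))<\epsilon/6$, and freeze it for the rest of the argument.

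With $\gamma$ now fixed, the second term is handled by Proposition \ref{prop:MinFunctionLargeN}, which gives $P(D_\partial(f_i^n,f_i)\le\gamma)\to 1$ for each $i=1,\dots,t$. Since the cover is finite and $D_\partial(f^n,f)=\max_{i} D_\partial(f_i^n,f_i)$ by (\ref{eq:D_boundary_as_max}), the event $\{D_\partial(f^n,f)\le\gamma\}$ is the intersection of the $t$ events $\{D_\partial(f_i^n,f_i)\le\gamma\}$, so the elementary estimate $P(\bigcap_i A_i)\ge 1-\sum_i(1-P(A_i))$ forces $P(D_\partial(f^n,f)>\gamma)\to 0$; I pick $N_1$ making this $<\epsilon/6$. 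The third term is immediate: as each $U_i$ has nonzero mass, $P(n_i=0)\le\sum_{i=1}^t(1-P(U_i))^n\to 0$, and I pick $N_2$ making it $<\epsilon/6$. Then $N=\max(N_1,N_2)$ gives $Bound_{D_\partial}(\gamma)<\epsilon$ for all $n\ge N$, which with Theorem \ref{thm:Bound_depending_mostly_on_gamma} and the nonnegativity of $D_M$ yields (\ref{eq:FullInequlity}).

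For the two weakenings I would reduce to the case just proved. A bin of zero mass almost surely receives no sample point, so with probability one it contributes nothing to any Mapper function evaluated on a sample, and hence nothing to $D_M$; discarding such bins leaves the instability unchanged while restoring the nonzero-mass hypothesis. Likewise, replacing each unbounded $U_i$ by its intersection with a fixed compact set containing the support of $P$ (in the spirit of Remark \ref{rmk:pseudo-distance}) alters no Mapper function on the support of $P$, so it leaves the instability untouched while restoring boundedness and compactness. For the clustering specialisation $t=1$, $U_1=\mathcal{X}$, the hypothesis $U_i\neq\mathcal{X}$ fails, so I instead use the variant bound in which $2P(n_i=0)$ is replaced by $2P(n_1\le1)$; since $n_1=n$ this term vanishes for $n\ge2$, and the two remaining terms are driven to zero exactly as above.

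The main obstacle is bookkeeping rather than a new idea: one must resist letting $\gamma$ depend on $n$. The proper-pair hypothesis is indispensable precisely because it decouples $P(T_\gamma(f))$ from the sample, guaranteeing that this term can be made small by a choice of $\gamma$ alone, after which the remaining two terms are killed by enlarging $n$. The secondary care-point is justifying that zero-mass and unbounded bins genuinely do not affect the instability, which rests on the almost-sure fact that no sample point lands outside the positive-mass, bounded part of the cover.
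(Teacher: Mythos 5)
Your proposal is correct and follows essentially the same route as the paper: control $2P(T_\gamma(f))$ by shrinking $\gamma$ via the proper-pair hypothesis (your continuity-from-above argument is the same fact the paper establishes in Remark \ref{rmk:ProperPair}), then kill $2P(D_\partial(f^n,f)>\gamma)$ for the fixed $\gamma$ using Proposition \ref{prop:MinFunctionLargeN} together with (\ref{eq:D_boundary_as_max}), kill $2P(n_i=0)$ using the nonzero mass of the bins, and handle the weakened hypotheses and the $t=1$ clustering case by discarding null bins, truncating to the support of $P$, and substituting $2P(n_1\le 1)$ for the last summand exactly as in (\ref{eq:ExtraSummandBetter}). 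The only cosmetic difference is the order in which $\gamma$ and the sample-size thresholds are fixed, which does not affect the argument.
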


    \begin{proof}
        Pick $1 > \epsilon >0$ and recall that 
        \begin{equation*}
            Bound_{D_\partial}(\gamma) = 2P(T_\gamma(f)) + 2P \left(  D_\partial(f^n, f) > \gamma \right) + 2P(n_i = 0).
        \end{equation*}
        Since each $U_i$ has nonzero mass it is clear that we choose $N'\in \mathcal{N}$ such that for all $n \geq N'$
        \begin{equation*}
            2P(n_i = 0) \leq \frac{\epsilon}{3}.
        \end{equation*}
        Following Remark \ref{rmk:ProperPair}, since $(P_i,Q^i)$ is a proper probability measure, $P_i(T_\gamma(f_i))$ becomes arbitrarily small as $\gamma$ goes to zero.
        By (\ref{eq:tube_as_union}), we have $T_\gamma(f) =  \bigcup_{i=1}^t T_\gamma(f_i)$,
        so we may choose $\gamma>0$ so that
        \begin{equation*}\label{eq:GammaTubeBound}
            2P(T_\gamma(f)) \leq \frac{\epsilon}{3}.
        \end{equation*}
        By Proposition \ref{prop:MinFunctionLargeN}, we have $P(D_{\partial}(f_i^n,f_i) \leq \gamma) \xrightarrow[n \rightarrow \infty ]{} 1$ and, in addition by (\ref{eq:D_boundary_as_max}), we also have
        $D_\partial (f, g) = \max_{i=1,\dots,t} D_\partial (f_i, g_i)$.
        Therefore, we may choose $N\in \mathbb{N}$, with $N\geq N'$ such that for all $n \geq N$
        \begin{equation*}
            2P \left(  D_\partial(f^n, f) > \gamma \right) \leq \frac{\epsilon}{3},
        \end{equation*}
        which proves (\ref{eq:FullInequlity}).
        By construction, $0 \leq \text{\rm{InStab}}_\text{\rm{Mapper}} \leq Bound_{D_\partial}(\gamma)$, so
        \begin{equation*}
        \text{\rm{InStab}}_\text{\rm{Mapper}}(\left\lbrace Q^i_{n_i}\right\rbrace _{i=1}^t,n, P) \xrightarrow[n \rightarrow \infty ]{} 0.
        \end{equation*}
        We may drop the condition on $U_i$ having nonzero mass, since if $P(U_i)=0$ then the clustering procedure must return the same result on any sample so $U_i$ dose not contribute to the instability.
        As pointed out in Remark \ref{rmk:pseudo-distance}, if the support of $P$ is bounded and $U_i$ is unbounded, then we may restrict $\mathcal{X}$ and $U_i$ to some bounded subset containing the support of $P$.
        If $U_i$ is a subset of $\mathbb{R}^a$ then we may take its closure, so this bounded subset may also be assumed to be closed, hence compact.
        These alterations of the cover do not change the value of the instability while allowing $Bound_{D_\partial}(\gamma)$ to be well defined.
        
        Dropping the restriction that no $U_i$ is all of $\mathcal{X}$ allows us to consider the clustering case where $t=1$ and $U_i=\mathcal{X}$.
        However as noted in (\ref{eq:ExtraSummand}), this means that $Bound_{D_\partial}(\gamma)$ becomes
        \begin{equation*}
            2P(T_\gamma(f)) + 2P \left(  D_\partial(f^n, f) > \gamma \right) + 2P(f_1^n=c^1_1),
        \end{equation*}
        where $P(f_i^n=c^i_1)$ is the probability that the optimal empirical clustering function has a single cluster.
        We have already shown that for sufficiently large $n$ the first two summands may be made arbitrarily small.
        As also noted in (\ref{eq:ExtraSummandBetter}), assuming there are at least two clusters,
        we may replace $2P(f_1^n=c^1_1)$ by $2P(n \leq 1)$, which is $0$ provide $n \geq 2$.
    \end{proof}

    Considering the Mapper output over the space of possible Mapper parameters, we would expect most choices of parameters to satisfy the conditioned of Theorem \ref{thm:MapperStability}.
    Setting aside conditions on the underlying probability distribution,
    most other conditions can be satisfied by choosing a reasonable Mapper setup, such as the classical Mapper algorithm and a sensible clustering procedure.
    The exception to this is the assumption that the quality functions $Q^i$ has a unique global minimizer.
    However as discussed below Defintion \ref{def:OptimalCluster}, this is most likely cause by a symmetry of $P_i$ in $U_i$, which we might interpret as a transition in the structure of the Mapper output at a particular choice of parameters.
    Therefore following Theorem \ref{thm:MapperStability} and as observed experimentally in Table \ref{fig:ChangingEpsilon} and Figure \ref{fig:ResolutionVsGain}, for a large enough sample size, we would expect the values of instability over the parameter space to form regions of low instability separated by ridges of instability.
    In this sense, as Theorem \ref{thm:MapperStability} justifies the existence of regions of stability, it could be considered a stability theorem for Mapper.
    
    \begin{remark}
       Equation (\ref{eq:FullInequlity}) will not hold if $Bound_{D_\partial}(\gamma)$ is replace with $Bound_{\phi}(\gamma,\delta)$.
       Recall that 
       \begin{equation*}
            Bound_{\phi}(\gamma,\delta) = 2 \left( P(T_\gamma(f)) + 1 - \iota(n) \left(1 - \delta \right)^t \prod_{i=1}^t P(U_i) \cdot P \left( n_i \geq N^i(S_{P_i}^{Q^i}(\gamma), \delta) \right) \right).    
       \end{equation*}
       As shown in the proof of Theorem \ref{thm:MapperStability}, the term $P(T_\gamma(f))$ can be made arbitrarily small for large $n$.
       By (\ref{eq:i(n)Limit}), we have $\iota(n) \xrightarrow[n \rightarrow \infty ]{} 1$.
       Also by construction $P \left( n_i \geq N^i(S_{P_i}^{Q^i}(\gamma), \delta) \right)$ may be arbitrarily close to $1$ if $n$ is large enough. 
       Therefore under the conditions of Theorem \ref{thm:MapperStability}, it follows that 
       \begin{equation*}
           \inf_{\gamma>0,\;\delta \in (0,1)}{Bound_{\phi}(\gamma,\delta)} \xrightarrow[n \rightarrow \infty ]{} 2\left( 1-\prod_{i=1}^t P(U_i) \right)
       \end{equation*}
       and $\prod_{i=1}^t P(U_i)$ is fixed by the choice of cover.
    \end{remark}
    

\section{Experimental tests for the instability of Mapper}\label{sec:Verifying}

In this section we present numerical experiments to investigate and demonstrate the causes of instability given in Remarks \ref{rmk:Reasons_which_produce_instability_1} and \ref{rmk:Reasons_which_produce_instability_2}.
In particular we focus on causes of instability unique to the Mapper algorithm.

{\centering
\begin{table}[ht]
	\begin{tabular}{cc}
		\begin{tabular}{ccc}
			\begin{subfigure}{0.15\textwidth}\centering\includegraphics[width=1\columnwidth]{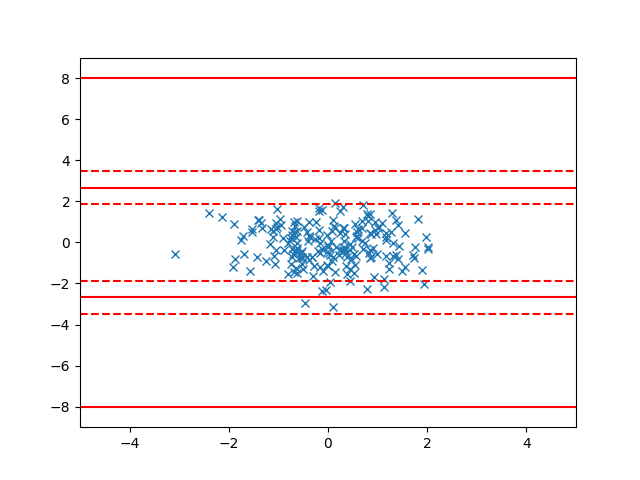}\caption*{200 points}
			\end{subfigure}&
			\begin{subfigure}{0.15\textwidth}\centering\includegraphics[width=1\columnwidth]{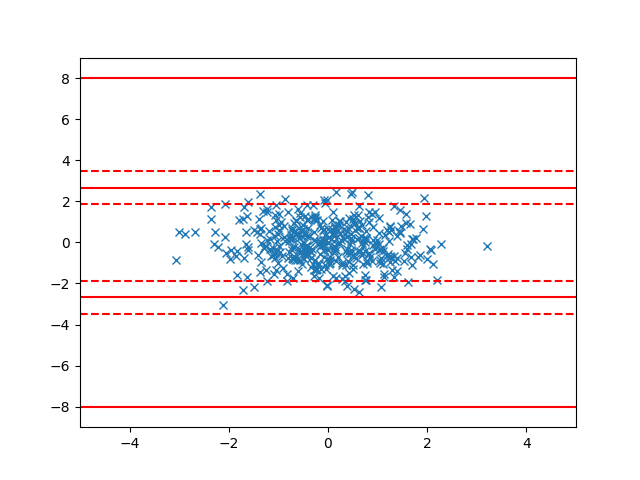}\caption*{400 points}
			\end{subfigure}&
			\begin{subfigure}{0.15\textwidth}\centering\includegraphics[width=1\columnwidth]{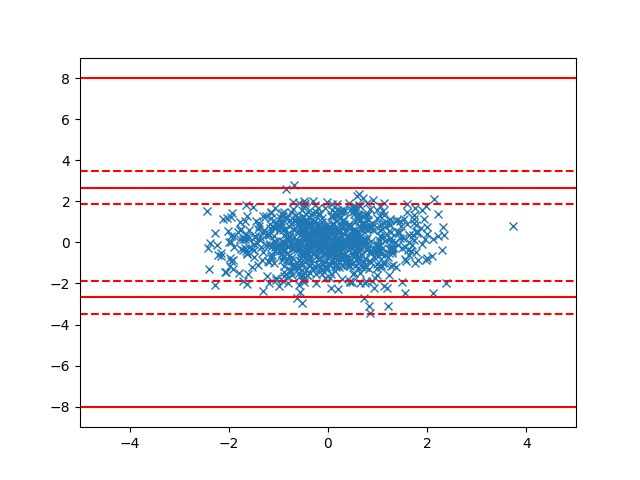}\caption*{800 points}
			\end{subfigure}\\
			\newline
			\begin{subfigure}{0.15\textwidth}\centering\includegraphics[width=1\columnwidth]{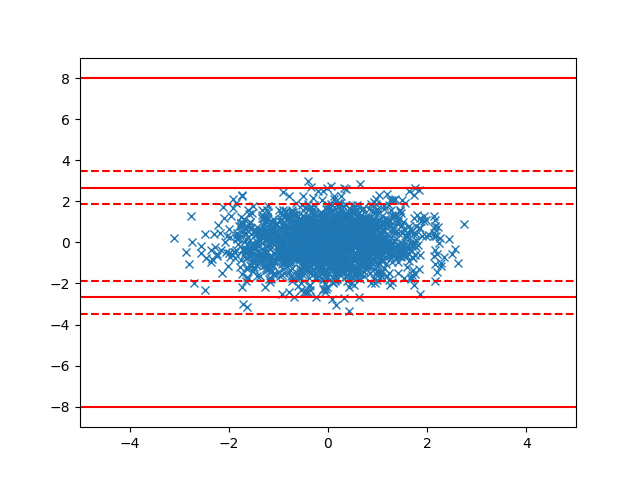}\caption*{1600 points}
			\end{subfigure}&
			\begin{subfigure}{0.15\textwidth}\centering\includegraphics[width=1\columnwidth]{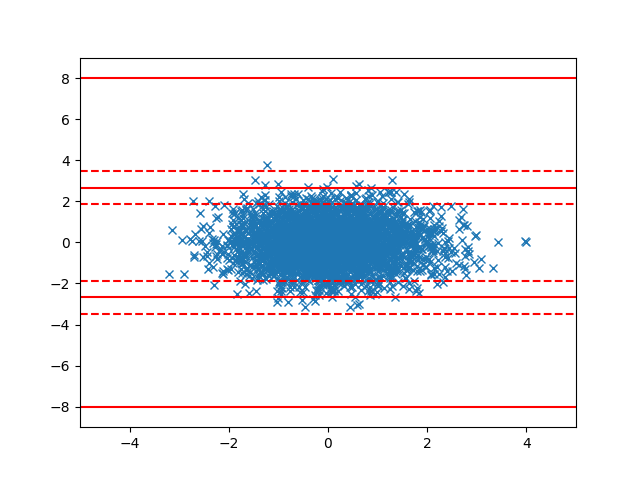}\caption*{3200 points}
			\end{subfigure}&
			\begin{subfigure}{0.15\textwidth}\centering\includegraphics[width=1\columnwidth]{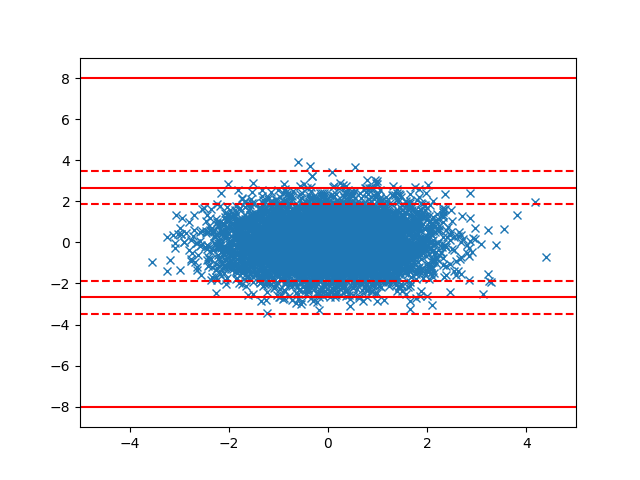}\caption*{6400 points}
			\end{subfigure}\\
			\newline
			\begin{subfigure}{0.15\textwidth}\centering\includegraphics[width=1\columnwidth]{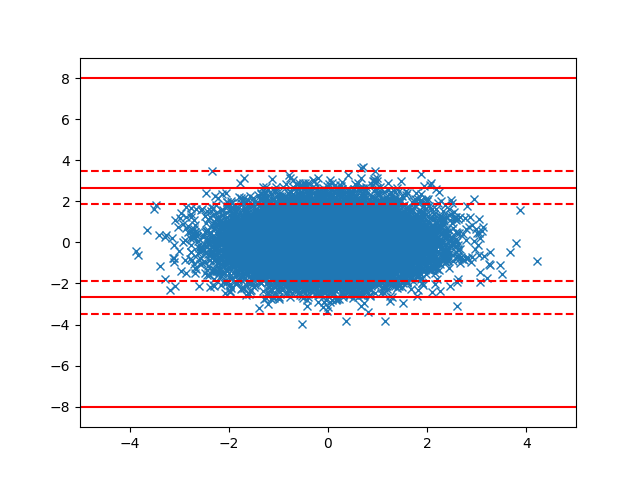}\caption*{12800 points}
			\end{subfigure}&
			\begin{subfigure}{0.15\textwidth}\centering\includegraphics[width=1\columnwidth]{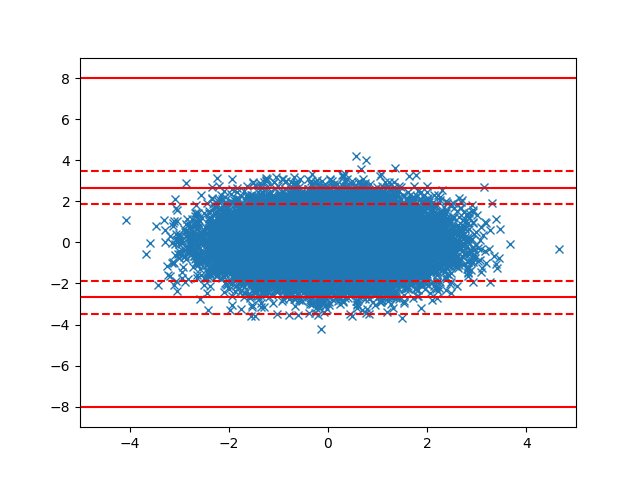}\caption*{25600 points}
			\end{subfigure}&
			\begin{subfigure}{0.15\textwidth}\centering\includegraphics[width=1\columnwidth]{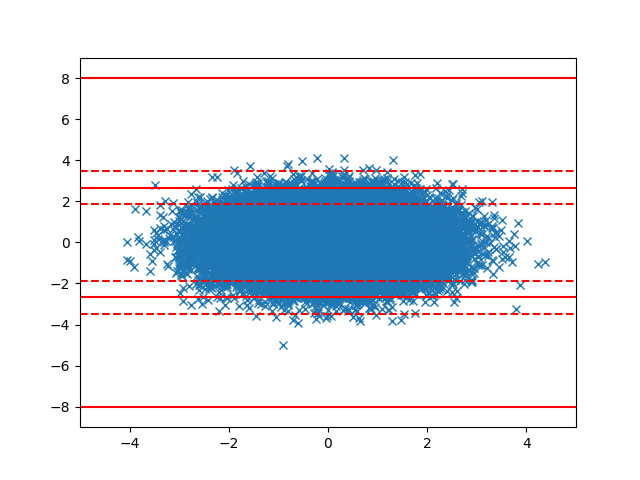}\caption*{51200 points}
			\end{subfigure}\\
		\end{tabular}
		\begin{subfigure}{0.5\textwidth}\centering\includegraphics[width=1\columnwidth]{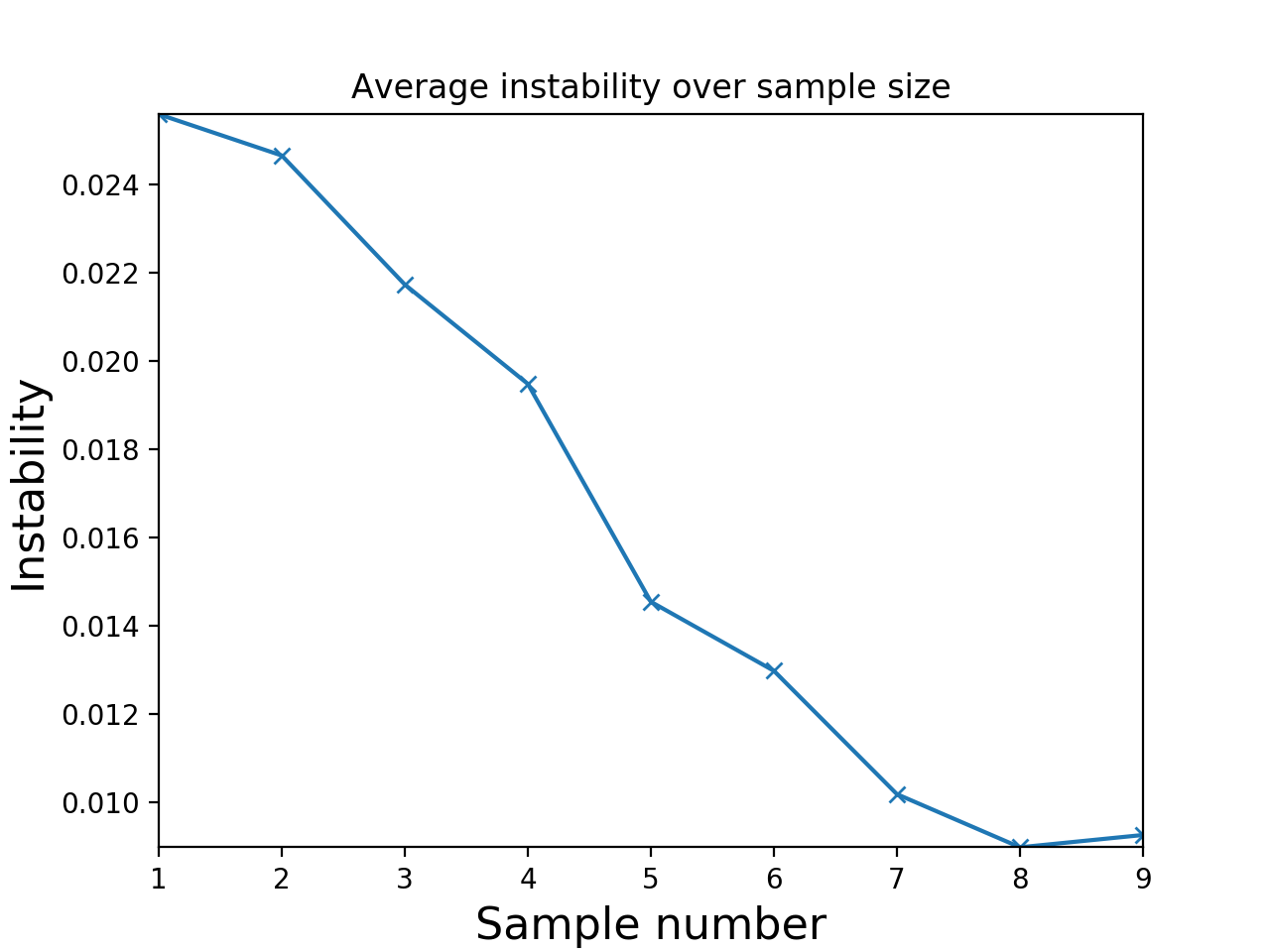}
		\end{subfigure}
	\end{tabular}
	\caption{On the left are nine samples from a bivariant Gaussian distribution centred at the origin. The first sample has $200$ points doubled each time up to $51200$ points. The dashed red lines denote the boundaries of the overlapping bins and the solid lines show the centre of the overlap. On the right we give a plot of the corresponding Mapper instability for each of the datasets on the left. The clustering procedure used was K-means with $K=2$ cluster on $15$ percent overlap between bins, the instabilities were averaged over $30$ different samples and each instability was computed using 40 sub-samples. See \S \ref{sec:ComputingInstability} for details of the procedure.}
	\label{fig:NumberOfPoints}
\end{table}
}

Table \ref{fig:NumberOfPoints} demonstrates a relationship between increasing numbers of points and lower values of instability as discussed in part (A) of Remark \ref{rmk:Reasons_which_produce_instability_2} and Theorem \ref{thm:MapperStability}.
While 
it is  intuitively clear that larger samples should lower 
the instability, experiments of this kind allow one 
to quantify the sample size necessary to ensure that is 
is not, by itself, a source of instability. 

{\centering
\begin{table}[ht]
	\begin{tabular}{cc}
		\begin{tabular}{ccc}
			\begin{subfigure}{0.15\textwidth}\centering\includegraphics[width=0.8\columnwidth]{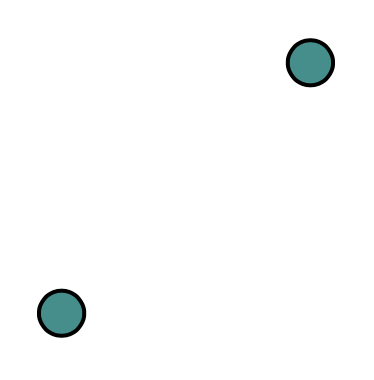}
			\end{subfigure}&
			\begin{subfigure}{0.15\textwidth}\centering\includegraphics[width=0.8\columnwidth]{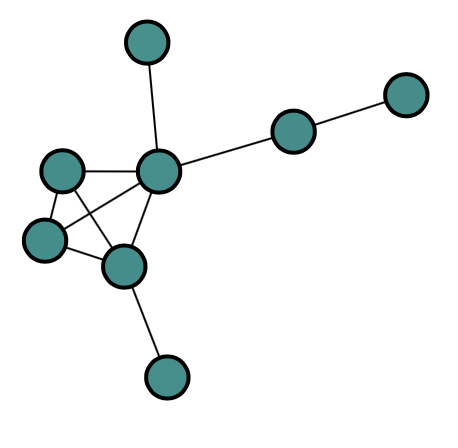}
			\end{subfigure}&
			\begin{subfigure}{0.15\textwidth}\centering\includegraphics[width=0.8\columnwidth]{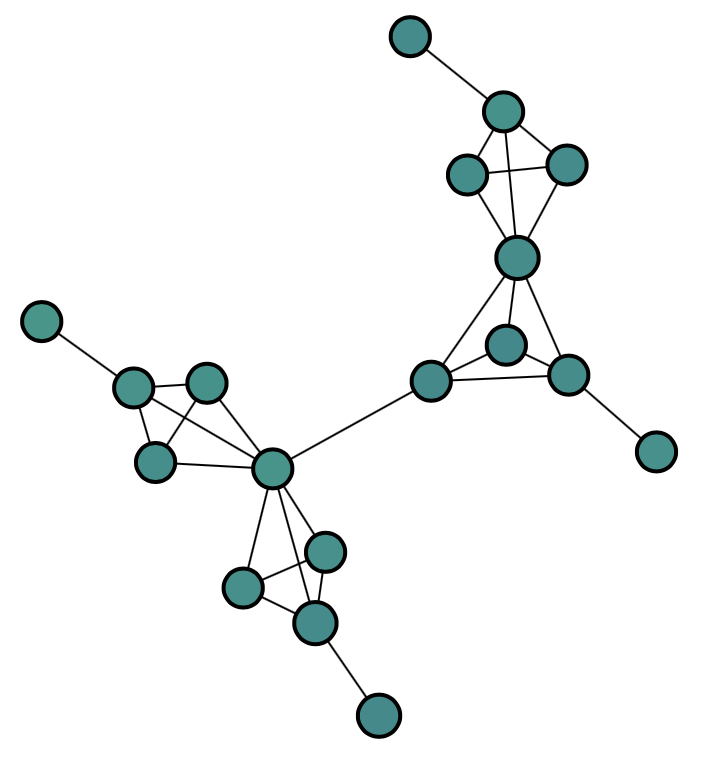}
			\end{subfigure}\\
			\newline
			\begin{subfigure}{0.15\textwidth}\centering\includegraphics[width=1\columnwidth]{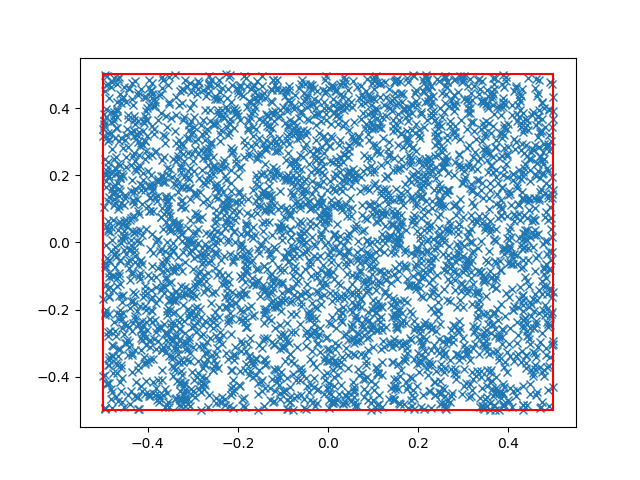}\caption*{1 cube}
			\end{subfigure}&
			\begin{subfigure}{0.15\textwidth}\centering\includegraphics[width=1\columnwidth]{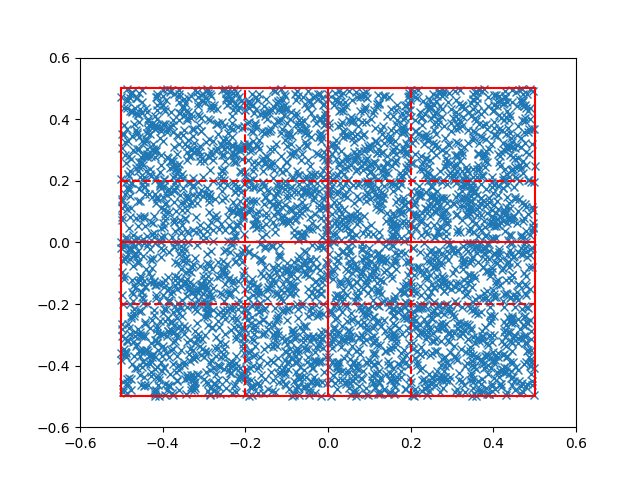}\caption*{4 cubes}
			\end{subfigure}&
			\begin{subfigure}{0.15\textwidth}\centering\includegraphics[width=1\columnwidth]{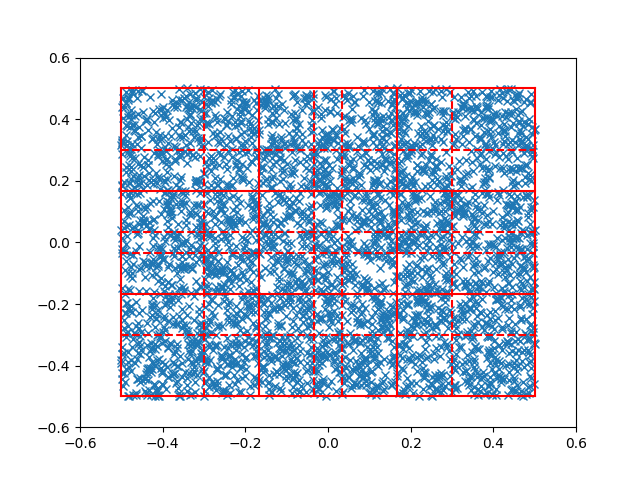}\caption*{9 cubes}
			\end{subfigure}\\
			\newline
			\begin{subfigure}{0.15\textwidth}\centering\includegraphics[width=0.8\columnwidth]{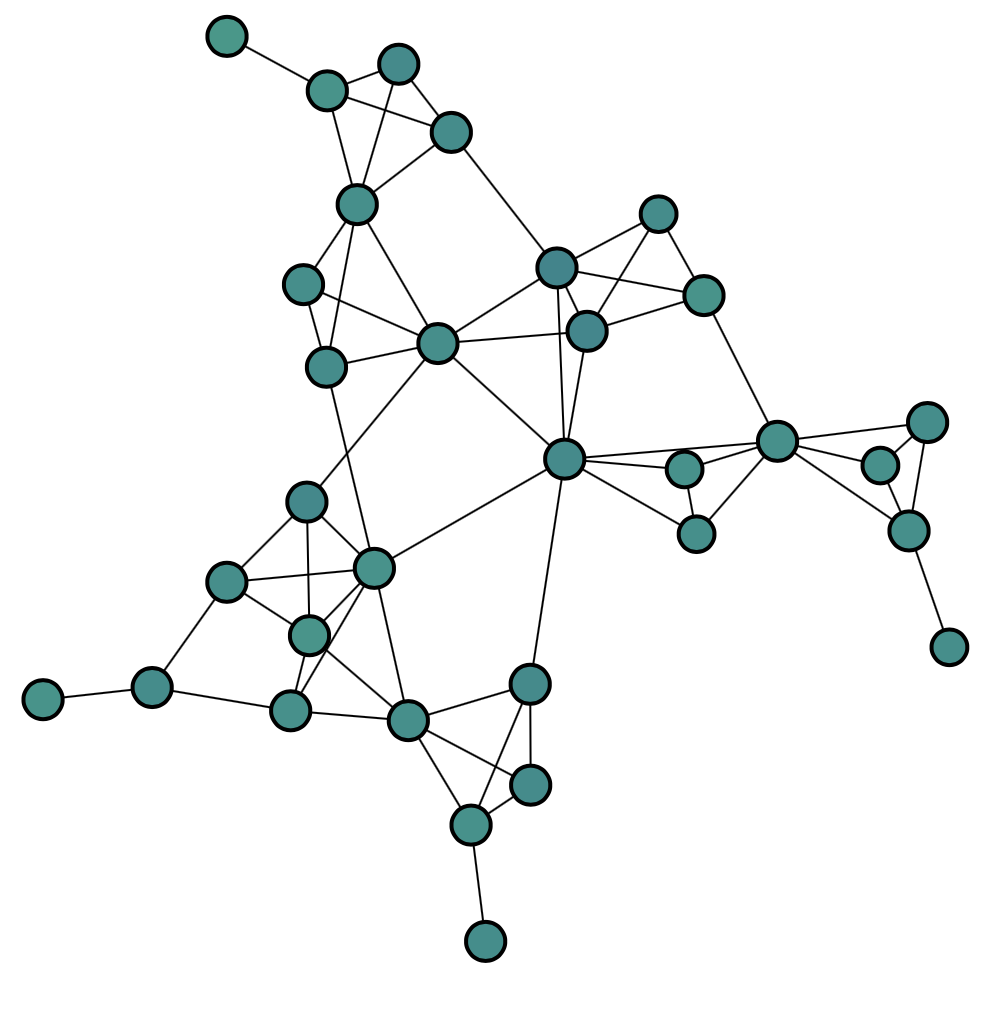}
			\end{subfigure}&
			\begin{subfigure}{0.15\textwidth}\centering\includegraphics[width=0.8\columnwidth]{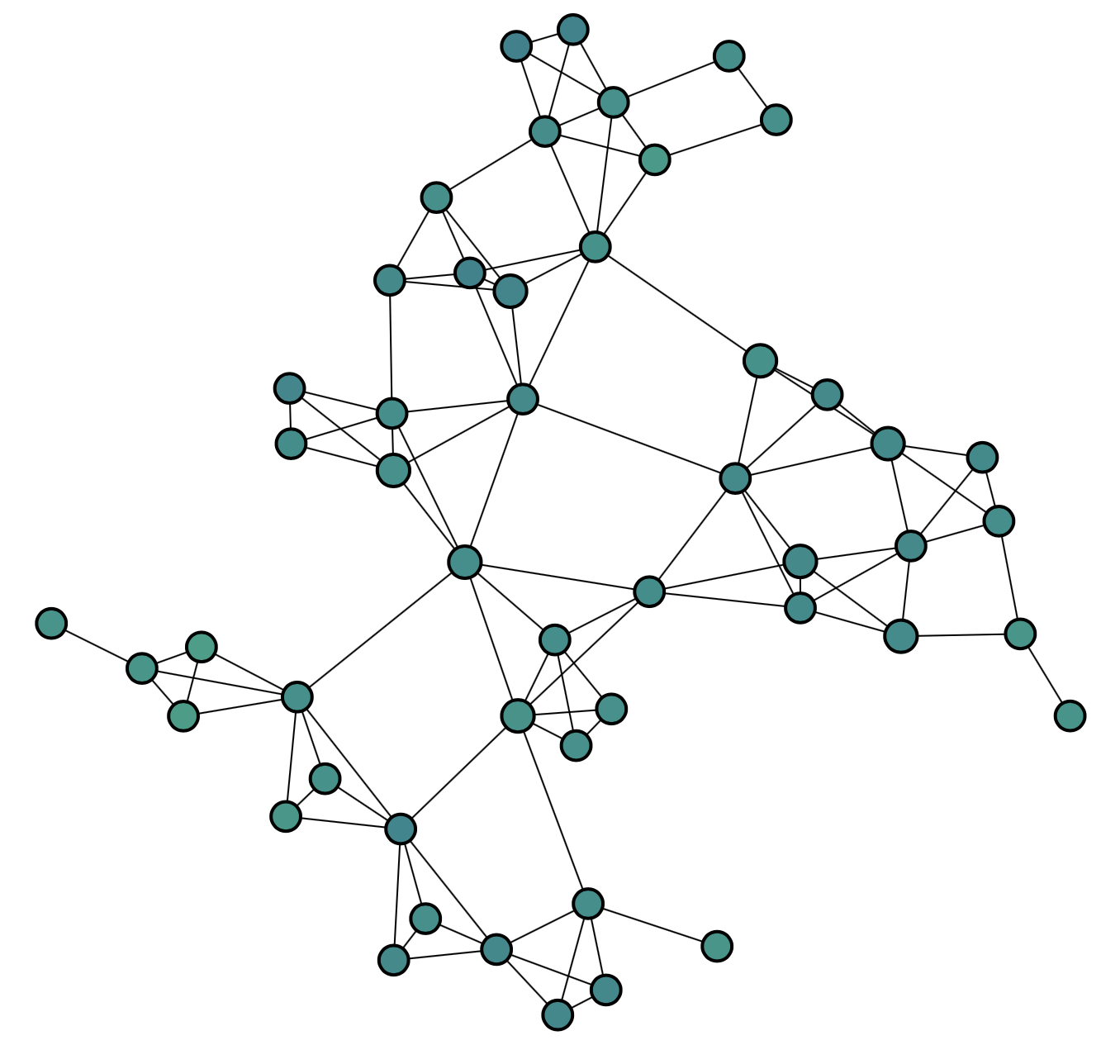}
			\end{subfigure}&
			\begin{subfigure}{0.15\textwidth}\centering\includegraphics[width=0.8\columnwidth]{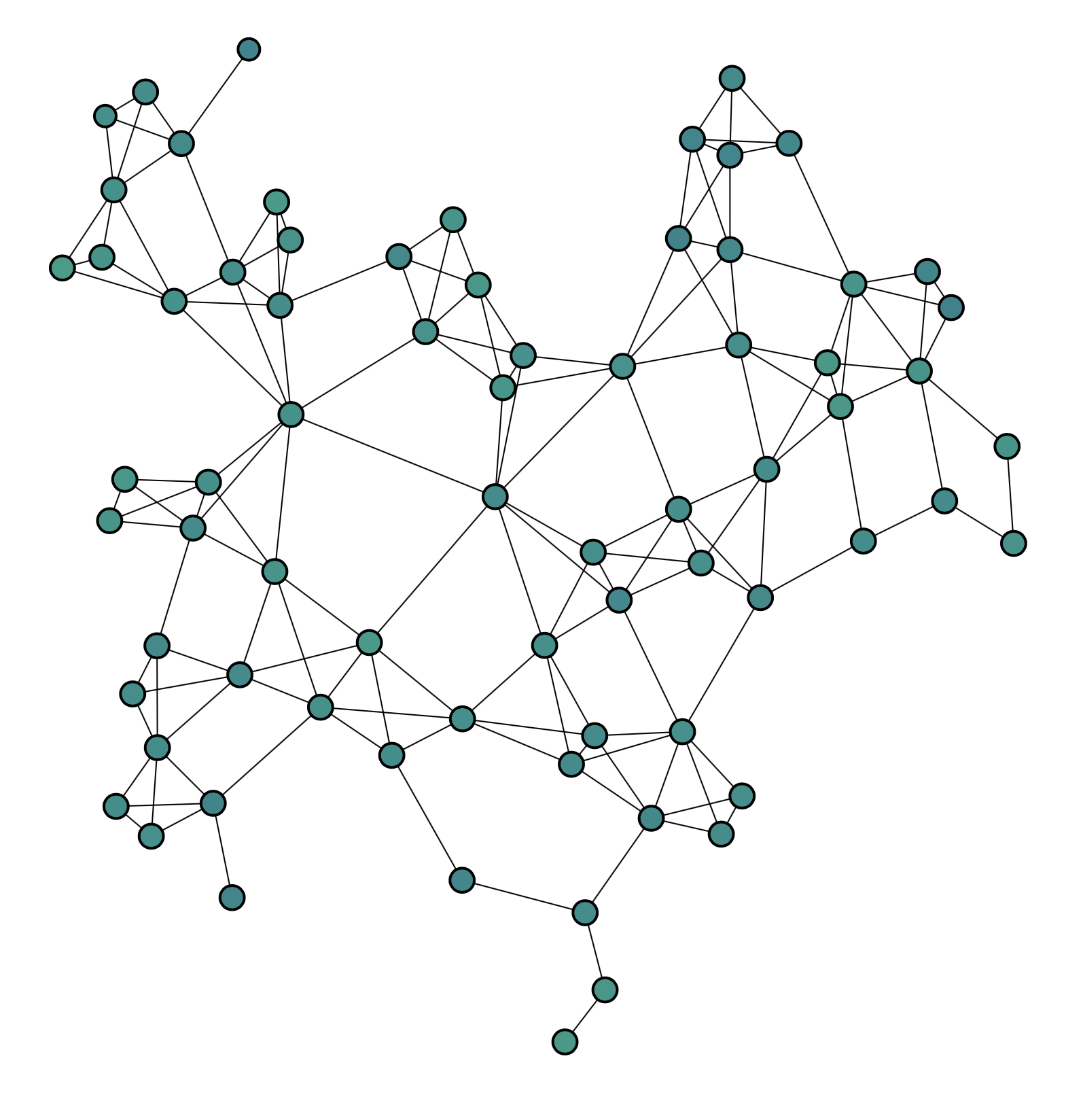}
			\end{subfigure}\\
			\newline
			\begin{subfigure}{0.15\textwidth}\centering\includegraphics[width=1\columnwidth]{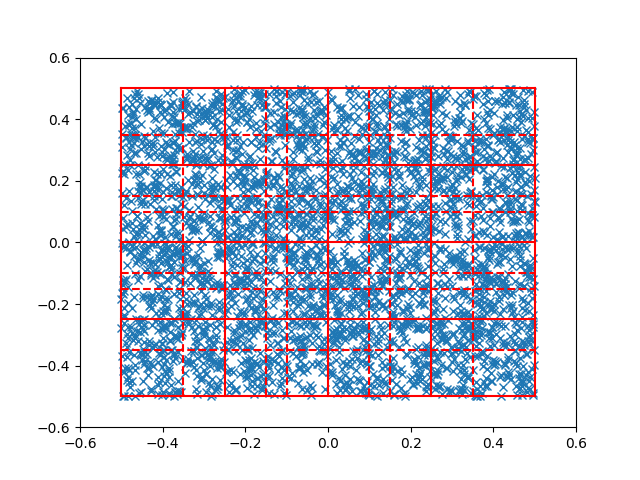}\caption*{16 cubes}
			\end{subfigure}&
			\begin{subfigure}{0.15\textwidth}\centering\includegraphics[width=1\columnwidth]{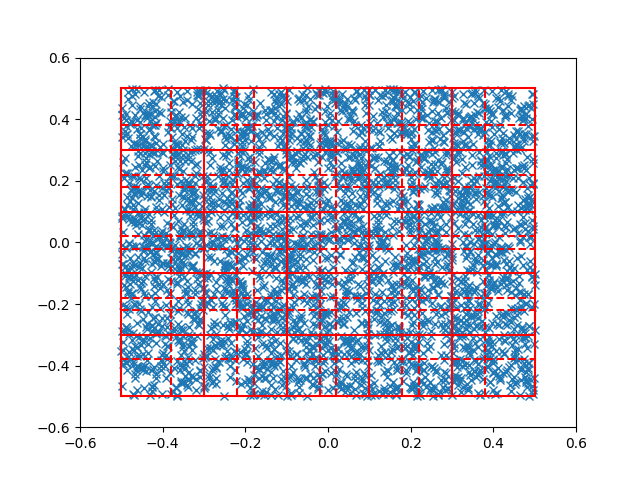}\caption*{25 cubes}
			\end{subfigure}&
			\begin{subfigure}{0.15\textwidth}\centering\includegraphics[width=1\columnwidth]{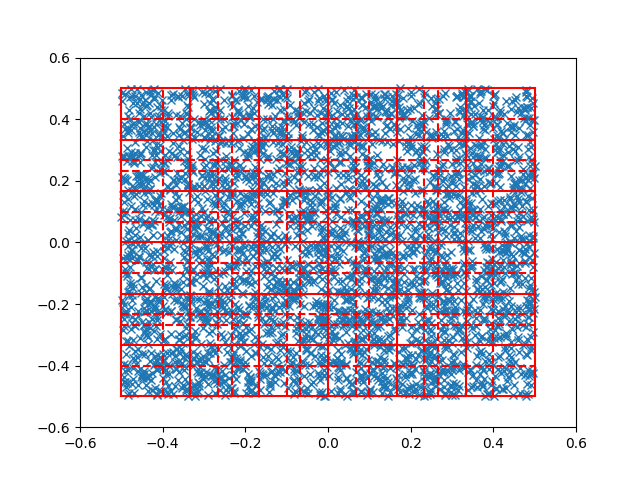}\caption*{36 cubes}
			\end{subfigure}\\
		\end{tabular}
		\begin{subfigure}{0.5\textwidth}\centering\includegraphics[width=1\columnwidth]{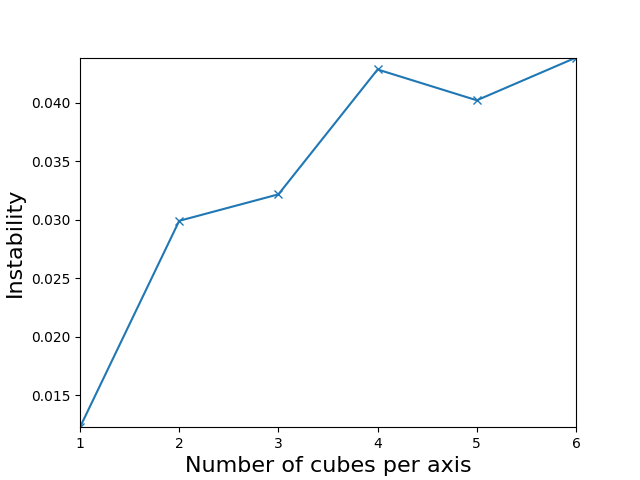}\caption*{Average instability over number of bins}
		\end{subfigure}
	\end{tabular}
	\caption{On the left 6 uniform samples of $3600$ points from a unit square centred at the origin, alongside Mapper graphs obtained by increasing the number of bins per axis from $1$ to $6$. The dashed red lines denote the boundaries of the overlapping bins and the solid lines the centre of the overlap. On the right is a plot of the corresponding Mapper instabilities for each dataset to the left. The clustering procedure used was K-means with $2$ clusters, there was $40$ percent overlap between bins, the instabilities were averaged over $30$ different samples and the instability for each sample was computed using 40 sub-samples. See \S \ref{sec:ComputingInstability} for details of the procedure.}
	\label{fig:NumberOfBins}
\end{table}
}

Table \ref{fig:NumberOfBins}  demonstrates the relationship between increasing numbers of bins and higher values of instability. In each case, we draw the same number of points
from a uniform distribution in a unit square. As the 
sample size is constant, by increasing the 
number of bins, the number of points in each bin decreases,  hence $P(U_i)$ decreases as explained by part (B) of Remark \ref{rmk:Reasons_which_produce_instability_2}. 

{\centering
\begin{table}[ht]
	\begin{tabular}{cc}
		\begin{tabular}{ccc}
			\begin{subfigure}{0.15\textwidth}\centering\includegraphics[width=1\columnwidth]{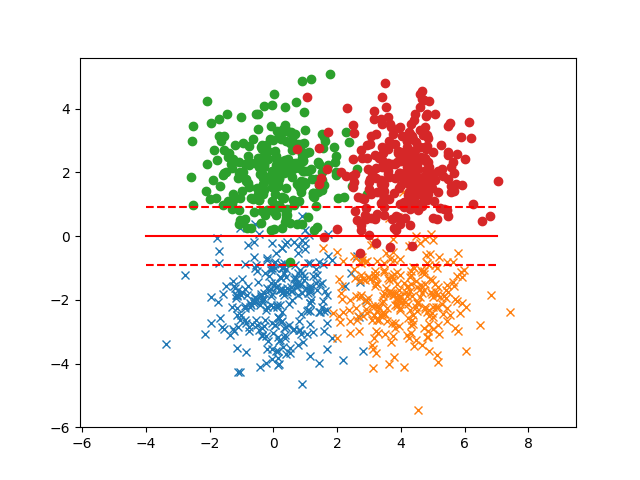}\caption*{Shift 0}
			\end{subfigure}&
			\begin{subfigure}{0.15\textwidth}\centering\includegraphics[width=1\columnwidth]{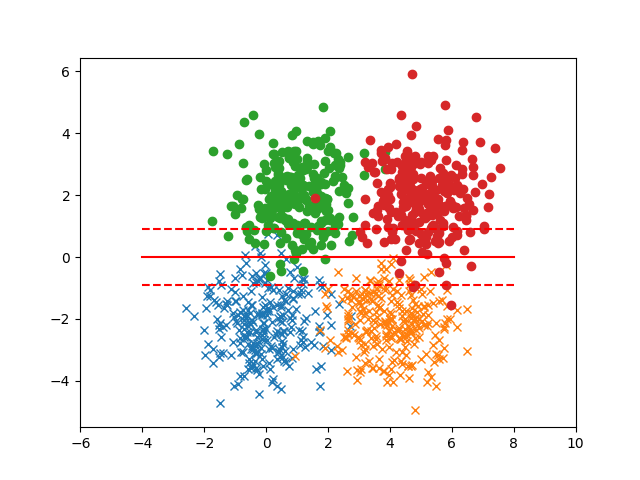}\caption*{Shift 1}
			\end{subfigure}&
			\begin{subfigure}{0.15\textwidth}\centering\includegraphics[width=1\columnwidth]{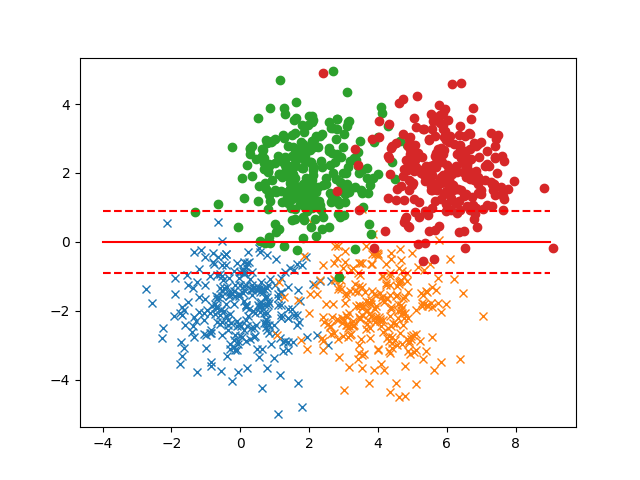}\caption*{Shift 2}
			\end{subfigure}\\
			\newline
			\begin{subfigure}{0.15\textwidth}\centering\includegraphics[width=1\columnwidth]{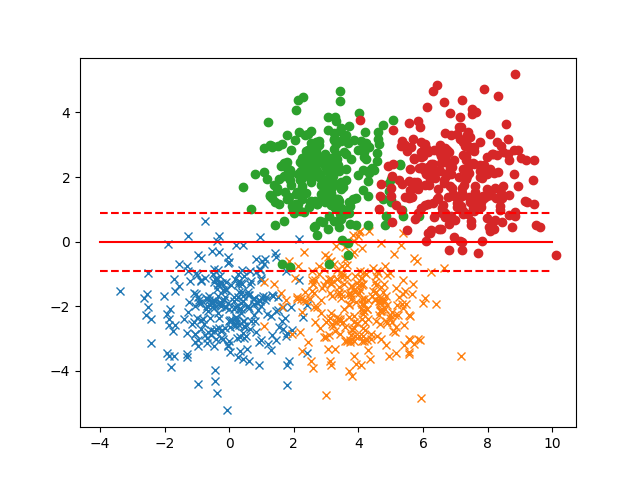}\caption*{Shift 3}
			\end{subfigure}&
			\begin{subfigure}{0.15\textwidth}\centering\includegraphics[width=1\columnwidth]{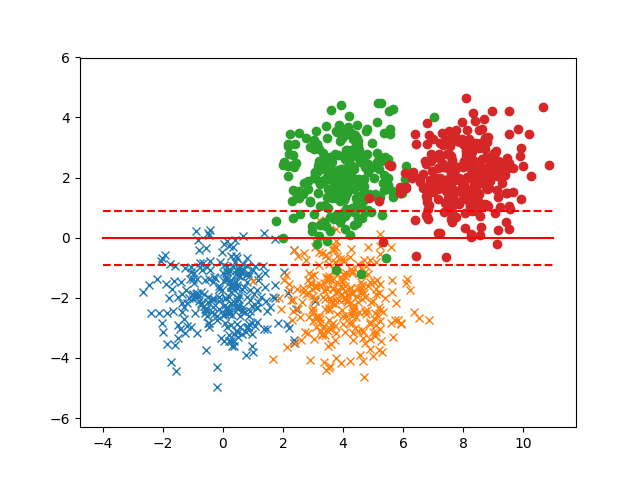}\caption*{Shift 4}
			\end{subfigure}&
			\begin{subfigure}{0.15\textwidth}\centering\includegraphics[width=1\columnwidth]{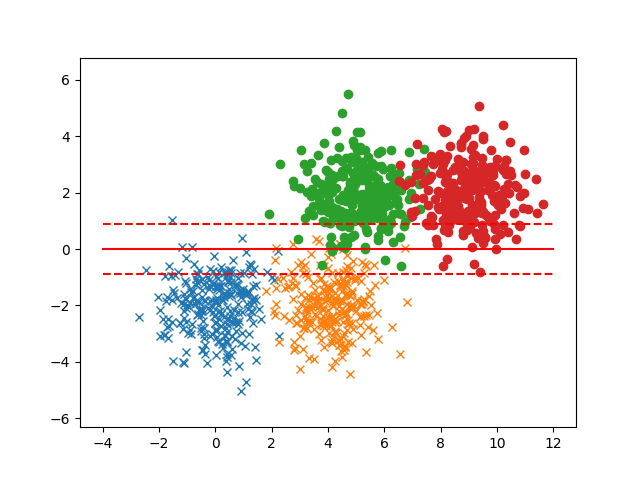}\caption*{Shift 5}
			\end{subfigure}\\
			\newline
			\begin{subfigure}{0.15\textwidth}\centering\includegraphics[width=1\columnwidth]{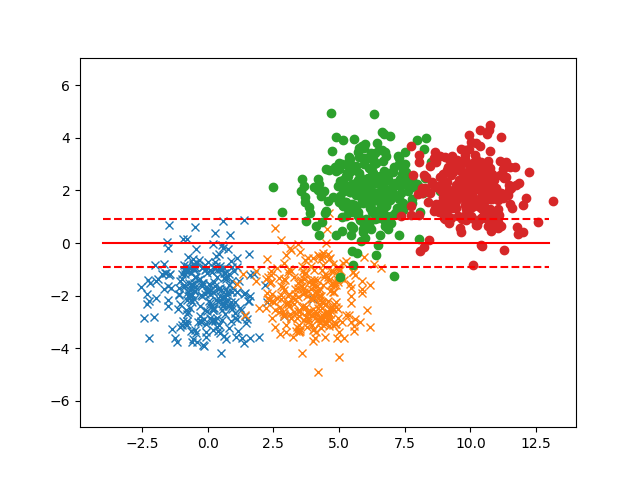}\caption*{Shift 6}
			\end{subfigure}&
			\begin{subfigure}{0.15\textwidth}\centering\includegraphics[width=1\columnwidth]{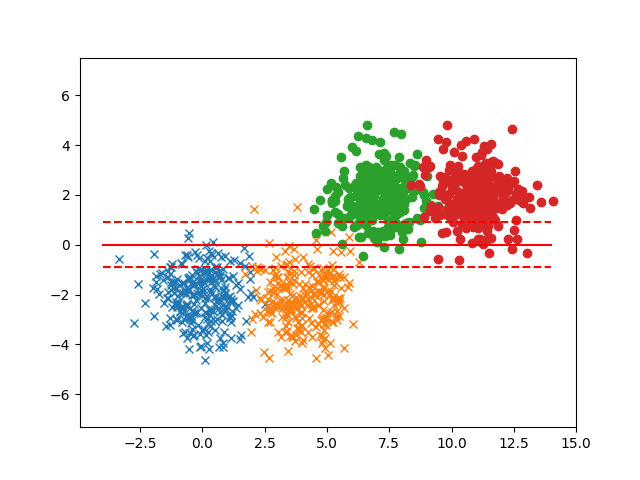}\caption*{Shift 7}
			\end{subfigure}&
			\begin{subfigure}{0.15\textwidth}\centering\includegraphics[width=1\columnwidth]{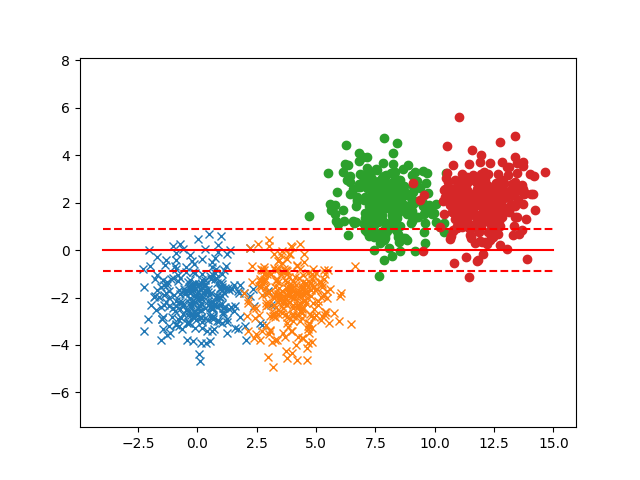}\caption*{Shift 8}
			\end{subfigure}\\
		\end{tabular}
		\begin{subfigure}{0.5\textwidth}\centering\includegraphics[width=1\columnwidth]{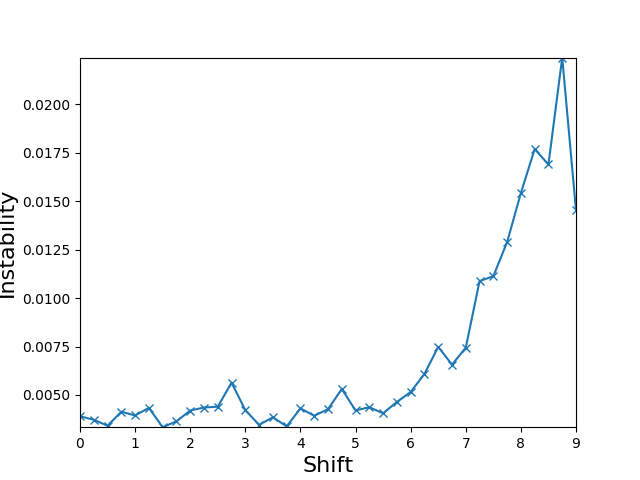}\caption*{Average instability over shift}
		\end{subfigure}
	\end{tabular}
	\caption{On the left, nine examples of samples of 4 bi-variant Gaussian distributions. Each dataset has $256$ points. In each case, the upper pair of Gaussians shifts furtherer to the right. The dashed red lines denote the boundaries of the overlapping bins and the solid lines the centre of the overlap. On the right is a plot of the Mapper instabilities of samples  on the left, with a shift between $0$ and $9$. The clustering procedure used was K-means with $2$ clusters, there was $15$ percent overlap between bins, the instabilities were averaged over $30$ different samples and the instabilities were computed using $32$ different sub-samples. See \S \ref{sec:ComputingInstability} for details of the procedure.}
	\label{fig:ShiftOfBoundaries}
\end{table}
}

Table \ref{fig:ShiftOfBoundaries} demonstrates the relationship between increasing distance between decision boundaries and higher values of instability. Since the overlap between the bins is low, the decision boundary in the upper and lower bins occur roughly between the pair of Gaussians in each bin. Hence, the decision boundaries in different bins move apart as the pairs of Gaussians move away from each over.
Part (d) of Remark \ref{rmk:Reasons_which_produce_instability_1} explains this result.

Observe also that the instability values in Table \ref{fig:ShiftOfBoundaries} are lower than those appearing in Table \ref{fig:NumberOfBins}.
This is a consequence of part (a) of \ref{rmk:Reasons_which_produce_instability_1},
clustering decision boundaries in Table \ref{fig:NumberOfBins} all pass through dense regions of points, while in Table \ref{fig:ShiftOfBoundaries}, the density of points around the decision boundaries is relatively small.

The spikes and ridges in instability that occur around changes in the structure of the Mapper graph in Table \ref{fig:ChangingEpsilon} and Figure \ref{fig:ResolutionVsGain} of section \ref{sec:InitialResults} are inaction to Theorem \ref{thm:MapperStability}, explained by part (e) of Remark \ref{rmk:Reasons_which_produce_instability_1} and part (b) of Remark \ref{rmk:Reasons_which_produce_instability_2}.
This is because at the boundary values of epsilon between structural changes in the graph, the clustering function in some bins changes dramatically with the choices of sample. 

High instability in the top left hand corner of the contour plot in Figure \ref{fig:ResolutionVsGain} and to a lesser extent most of the left hand side of the plot, appears to correspond to Mapper graphs with a fragmented outer circle.
This feature can be explained by (c) of Remark \ref{rmk:Reasons_which_produce_instability_1},
since the low percentage overlap between the bins is causing fragments of the outer circle to partially join together in an inconsistent fashion over varying subsamples.




\section{Conclusions}\label{sec:Conclusion}

In this paper we have demonstrated that changes in  the choice of particular parameters to create Mapper outputs can 
lead to very unstable results. To help alleviate this 
shortcoming, we have created a framework that can be used 
to select regions in the parameter space which are likely 
to create reliable Mapper outputs. 
We have introduced  Mapper instability to provide a numerical
 measure of reliability of a particular Mapper output, especially when considered over a range of parameters.
In particular our construction makes very few assumption on the specifics of the chosen Mapper construction, which makes it applicable
to any Mapper-type algorithm.

We provide theoretical results to describe and explain 
the behaviour of the Mapper instability and in our discussion 
we make very few assumptions about the specifics 
of the structure of the data or the particular cover used 
to create Mapper outputs and show that in most circumstances the instability converges to zero as the sample size is increased. 
We construct explicit bounds  which lead to practical criteria for Mapper instability. We provide a  number of experimental results to further support the practical use our findings. 

An important outcome of our discussion is that we are 
now able to verify when a change in the Mapper output 
is indeed supported by the structure of the data. Specifically, while more complicated Mapper outputs often
suffer from a greater instability, we show that 
when the increase in instability is accompanied by 
low instability, the resulting structure is indeed 
present in the data. 


\section{Appendix}

In this Appendix we justify the assumption that $\mathcal{I}(Q_n)$ of \ref{eq:InstabilityVariable} is a random variable.
In other words,  
$\mathcal{I}(Q_n)$ needs to be a measurable function 
with respect to the Borel $\sigma$-algebra on $\mathbb{R}$ and the product probability measure on $U^n\times U^n$. 
The measurability of $\mathcal{I}(Q_n)$ can be guaranteed provided the empirical quality function satisfies the condition of the following lemma. 

\begin{lemma}\label{lem:MeasureCondition}
	Let $i \colon \mathcal{F}_n^U \times \mathcal{F}_n^U  \to \mathcal{F}_{2n} \times \mathcal{F}_{2n}$ be the inclusion map given by the Voronoi cells (\ref{eq:Voronoi}).
	Then for each pair $(f,g)$ of clustering functions on $2n$ points, if the pre-image of
	\begin{equation}\label{eq:PreImage}
	    U^n\times U^n \to \mathcal{F}_{2n}\times \mathcal{F}_{2n}, \;\;\;
	    (X,X') \mapsto i(C_n(X),C_n(X'))
	\end{equation}
	at $(f,g)$ across $\mathcal{F}_{2n}\times \mathcal{F}_{2n}$ is measurable,
	then $\mathcal{I}(Q_n)$ is a random variable.
\end{lemma}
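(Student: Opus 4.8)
The plan is to exploit the fact that, by (\ref{eq:InstabilityVariable}), the map $\mathcal{I}(Q_n)$ factors as the composite
\[
U^n \times U^n \xrightarrow{C_n \times C_n} \mathcal{F}_n^U \times \mathcal{F}_n^U \xhookrightarrow{\;i\;} \mathcal{F}_{2n} \times \mathcal{F}_{2n} \xrightarrow{\;D_m\;} \mathbb{R},
\]
and to observe that the intermediate target $\mathcal{F}_{2n} \times \mathcal{F}_{2n}$ is a \emph{finite} set. This single structural fact collapses the problem onto the hypothesis of the lemma.

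First I would record the finiteness. By Definition \ref{def:Clustering}, $\mathcal{F}_{2n}$ is the quotient of the set of functions from a $2n$-point index set to $\{1,\dots,s\}$ by relabelling, so $\lvert \mathcal{F}_{2n}\rvert \leq s^{2n} < \infty$; consequently $\mathcal{F}_{2n} \times \mathcal{F}_{2n}$ is finite. Equip it with the discrete $\sigma$-algebra, under which every subset is measurable. In particular the minimal matching distance $D_m$ of Definition \ref{def:MinMatchDistence}, being a map out of a finite set, is then automatically measurable.

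Next I would unwind the pre-image. Fix a Borel set $B \subseteq \mathbb{R}$. Since $D_m^{-1}(B)$ is a subset of the finite set $\mathcal{F}_{2n} \times \mathcal{F}_{2n}$, it is a finite union of singletons $\{(f,g)\}$. Writing $\Psi$ for the map $(X,X') \mapsto i(C_n(X), C_n(X'))$ of (\ref{eq:PreImage}), I then compute
\[
\mathcal{I}(Q_n)^{-1}(B) = \Psi^{-1}\bigl(D_m^{-1}(B)\bigr) = \bigcup_{(f,g)\in D_m^{-1}(B)} \Psi^{-1}\bigl(\{(f,g)\}\bigr).
\]
By hypothesis each $\Psi^{-1}(\{(f,g)\})$ is measurable in $U^n \times U^n$, and a finite union of measurable sets is measurable; hence $\mathcal{I}(Q_n)^{-1}(B)$ is measurable for every Borel $B$, which is exactly the statement that $\mathcal{I}(Q_n)$ is a random variable.

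What makes this a clean bookkeeping argument rather than a hard theorem is that all the genuine analytic content has been pushed into the hypothesis: the only way the argmin clustering map $C_n$ could obstruct measurability is through the pre-images $\Psi^{-1}(\{(f,g)\})$, and because the ambient label space is finite nothing else can interfere, so no topology on $\mathcal{F}_{2n}$ beyond the discrete one is required. The main thing to be careful about is to confirm that the relevant codomain after the Voronoi-cell inclusion $i$ is $\mathcal{F}_{2n}$ (clusterings of a fixed $2n$-point set), rather than $\mathcal{F}_n^U$, which as a union over all samples need not be finite in an obvious way; it is precisely the passage through the finite $\mathcal{F}_{2n}$ that makes the reduction work.
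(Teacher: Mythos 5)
Your proof is correct and follows essentially the same route as the paper's: both arguments rest on the finiteness of $\mathcal{F}_{2n}\times\mathcal{F}_{2n}$, the measurability of $D_m$ out of a finite set, and the factorisation of $\mathcal{I}(Q_n)$ in (\ref{eq:InstabilityVariable}) so that the pre-image of any Borel set is a finite union of the sets assumed measurable in the hypothesis. Your write-up simply makes explicit the decomposition of $D_m^{-1}(B)$ into singletons, which the paper leaves implicit.
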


\begin{proof}
Given that there are only finitely many clustering functions on $2n$ points, the map 
\[
D_m: \mathcal{F}_{2n}\times  \mathcal{F}_{2n} \longrightarrow 
\mathbb{R}
\]
determined by the matching metric is measurable. In consequence, by formula (\ref{eq:InstabilityVariable}), the 
map $\mathcal{I}(Q_n)$ is a random variable when the assumption of the Lemma holds. 
\end{proof}

The condition of the Lemma \ref{lem:MeasureCondition} is easily verified for common quality functions.
For example in the case of nearest neighbour clusterings, given $\epsilon >0$ and clusterings $f,g$ on $2n$ points, we 
can describe the preimage in (\ref{eq:PreImage})
by a set of simple conditions. 
More precisely, the preimage  is  given by the set  of points $((X_1,\dots,X_n),(X'_1,\dots,X'_n)) \in U^n \times U^n$
that satisfy the following. First, we define an $\epsilon$-path in a metric
space to be a sequence of points $(X_1, \dots, X_k)$ such that $D(X_i, X_{i+1}) \leq \epsilon$ for $i=1, \dots, k-1$. 
\begin{enumerate}
    \item For every two points $X_{i_\alpha}$ and $X_{i_\beta}$ chosen 
    from $(X_1, \dots, X_n)$, we have that 
    $f(X_{i_\alpha})=f(X_{i_\beta})$ if and only there is an $\epsilon$-path consisting of points from the list $(X_1, \dots, X_n)$ connecting 
    $X_{i_\alpha}$ and $X_{i_\beta}$.  
    
    \item The function $g$ satisfies an analogous condition on the 
    sequence of points $(X'_1, \dots, X'_n)$. 
    \item For every $i=1, \dots, n$, let $j$ be the smallest index 
    so that the element $X_j$ from the list $(X_1, \dots, X_n)$ minimises 
    the distance $D(X_i', X_k)$, for $k=1, \dots, n$. Then if $f(X_j) = C$ then also $f(X'_i)=C$.
    \item
    An analogous condition holds for the clustering $g$. 
\end{enumerate}

\begin{lemma}
    For each $(f,g)\in \mathcal{F}_{2n}\times \mathcal{F}_{2n}$, 
    the subsets of $(\mathbb{R}^a)^n\times (\mathbb{R}^a)^n$ described above are measurable.
\end{lemma}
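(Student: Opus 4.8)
The plan is to show that each of the four conditions (1)--(4) cuts out a Borel subset of $(\mathbb{R}^a)^n \times (\mathbb{R}^a)^n$, so that the set they jointly describe, being a finite intersection of Borel sets, is measurable. Throughout I would use that the metric $D$ is continuous, so that each of the maps $(X,X')\mapsto D(X_k,X_l)$ and $(X,X')\mapsto D(X_i',X_k)$ is continuous; consequently the sublevel and superlevel sets $\{D(\cdot,\cdot)\leq\epsilon\}$ and $\{D(\cdot,\cdot)<\epsilon\}$ are closed and open respectively, and in particular Borel.

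First I would treat the $\epsilon$-path conditions (1) and (2). Since $f$ is a \emph{fixed} clustering of the $2n$ labelled points, whether $f(X_{i_\alpha})=f(X_{i_\beta})$ is a property of the index pair alone, not of the sample. The variable part is the existence of an $\epsilon$-path through $(X_1,\dots,X_n)$, and this is a measurable event: any such path may be reduced (by excising cycles) to one with distinct vertices, hence to one of the finitely many index sequences $i_\alpha=k_0,k_1,\dots,k_m=i_\beta$ with $m\leq n$. The set on which a given sequence is an $\epsilon$-path is $\bigcap_l\{D(X_{k_l},X_{k_{l+1}})\leq\epsilon\}$, a finite intersection of closed sets; the union over the finitely many admissible sequences is therefore closed, and its complement (non-existence of a path) is open. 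Condition (1) is the intersection over all index pairs of this closed set (when the two indices share a label under $f$) or of its complement (when they do not), hence Borel; condition (2) is identical with $X'$ and $g$.

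Next I would handle the nearest-neighbour conditions (3) and (4), whose only subtlety is that the index $j=j(X_i')$ varies with the sample. To remove this dependence I would partition the space according to the value of $j$: for each fixed $j_0\in\{1,\dots,n\}$ let $A_{i,j_0}$ be the set of $(X,X')$ on which $j_0$ is the smallest index minimising $D(X_i',X_k)$ over $k$, namely
\[
A_{i,j_0}=\bigcap_{k}\{D(X_i',X_{j_0})\leq D(X_i',X_k)\}\cap\bigcap_{k<j_0}\{D(X_i',X_{j_0})<D(X_i',X_k)\}.
\]
Each $A_{i,j_0}$ is Borel, being an intersection of closed and open sets built from continuous functions, and as $j_0$ ranges over $\{1,\dots,n\}$ these sets partition the space. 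On $A_{i,j_0}$ the index $j$ is constant equal to $j_0$, so the implication ``$f(X_j)=C\Rightarrow f(X_i')=C$'' reduces to a fixed statement about the labels $f$ assigns to the fixed positions $j_0$ and $X_i'$; it is therefore satisfied either on all of $A_{i,j_0}$ or on none of it. Condition (3) for a fixed $i$ is thus the union of those $A_{i,j_0}$ on which the implication holds (Borel), and intersecting over $i=1,\dots,n$ keeps it Borel; condition (4) is the same with $X$ and $g$.

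Finally, the subset described before the lemma is the intersection of the four Borel sets arising from (1)--(4), hence Borel and in particular measurable for the product Borel $\sigma$-algebra on $(\mathbb{R}^a)^n\times(\mathbb{R}^a)^n$. The main thing to get right is the argmin bookkeeping in (3)--(4): the ``smallest index'' tie-breaking rule must be encoded by the \emph{strict} inequalities for $k<j_0$ so that the pieces $A_{i,j_0}$ genuinely partition the space, after which measurability is immediate since $f$ and $g$ are fixed and the implications become constant on each piece.
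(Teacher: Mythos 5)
Your proof is correct, and it diverges from the paper's own argument in one place worth noting. For conditions (3) and (4) the two arguments are essentially identical: your partition into the sets $A_{i,j_0}$ on which $j_0$ is the smallest minimiser of $k\mapsto D(X_i',X_k)$ is exactly the statement that $X_i'$ lies in the Voronoi cell of $X_{j_0}$ as defined in (\ref{eq:Voronoi}), which is how the paper phrases it; both encode the tie-breaking by strict inequalities against smaller indices and non-strict ones against larger indices, and both use that the label comparison becomes a constant on each piece. For conditions (1) and (2), however, your route is genuinely different and, in my view, cleaner. The paper builds the admissible set inductively one point at a time, characterising the allowed positions of $X_{k+1}$ by pairwise distance constraints against $X_1,\dots,X_k$; as written that characterisation is both garbled (the two displayed inequalities there are identical) and stronger than the $\epsilon$-path condition it is meant to capture, since connectivity by an $\epsilon$-path does not require $X_{k+1}$ to be within $\epsilon$ of \emph{every} same-labelled predecessor. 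Your argument avoids this entirely: you observe that an $\epsilon$-path may be reduced to one with distinct vertices, so path-existence between two fixed indices is a finite union, over index sequences, of finite intersections of closed sublevel sets, hence closed; condition (1) is then a finite intersection of these closed sets and their complements according to whether the fixed labelling $f$ agrees on the index pair. This gives a direct and complete verification where the paper's induction would need repair, and the only bookkeeping you rely on --- that $f$ and $g$ are fixed labellings of the $2n$ positions, so all label comparisons are independent of the sample --- is exactly right.
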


\begin{proof}
    Given $(f,g)\in \mathcal{F}_{2n} \times \mathcal{F}_{2n}$,
    consider in turn the restrictions imposed by each of the conditions (1), (2), (3) and (4) given above the lemma.
    
    For (1), since all points sharing a label are connected by $\epsilon$-paths and any two such points are connected by a path, we may consider adding these points inductively in the following way.
    When $n=1$ there is a single point which can take any value in $\mathbb{R}^a$.
    In particular, $\mathbb{R}^a$ is a measurable set.
    Now assume inductively that for some $k=1,\dots,n$, the possible values of the points $X_1,\dots,X_k$ under condition (1) form a measurable set $S_k\subseteq (\mathbb{R}^a)^k$.
    The corresponding set $S_{k+1}$ on points $X_1,\dots ,X_k,X_{k+1}$ is a subspace of $S_k \times \mathbb{R}^a$, under the condition that the final point $X_{k+1}$ is at most a distance of $\epsilon$ from any of the points of $X_1,\dots ,X_k$ with the same label and at least a distance greater than $\epsilon$ form any with a different label.
    More precisely $X_{k+1}$ satisfies that, for each $j=1,\dots,k$,
    \begin{equation*}
        D(X_j,X_{k+1}) \leq \epsilon \text{ if } f(X_j)=f(X_{k+1})
        \text{ and }
        D(X_j,X_{k+1}) \leq \epsilon \text{ if } f(X_j) \neq f(X_{k+1}).
    \end{equation*}
    Note that the possible values of $X_{k+1}$ are nonempty.
    If $X_{k+1}$ shares a label with one of $X_1,\dots,X_k$, then it may for example take the same value and if not the union of the epsilon neighbourhoods of points $X_1,\dots,X_k$ cannot cover all of $\mathbb{R}^a$.
    So the possible values of $X_{k+1}$ are the nonempty intersection of a closed set determined by the first set of strict bounds and an open set determined by the second set of non-strict bounds. 
    Since $S_k$ is measurable, the above inequalities on $X_{k+1}$ extend it to a measurable set $S_{k+1}$. 
    Hence the possible values of $X_1,\dots,X_n$ under condition (1) lie in a measurable set $A=S_n$.
    Analogously we see that the set $B$ of the possible values of $X'_1,\dots ,X'_n$ under condition (2) is measurable.
    
    For each $i=1,\dots ,n$, consider the subsets
    \begin{equation*}
        X_\alpha^i = \{ X_{\alpha_1},\dots X_{\alpha_k} \} \subseteq \{ X_1,\dots,X_n\},
    \end{equation*}
    such that $f(X_{\alpha_j})=f(X'_i)$ for each $j= 1,\dots,k$.
    The Voronoi cells of $X_1,\dots,X_n$ are defined in (\ref{eq:Voronoi}).
    For each $X_{\alpha_j}$ its corresponding cell is obtained by a finite set of inequalities.
    Each inequality is strict if it arises from a pair of points $X_{\alpha_j}^i$ and 
    $X_p$ such that $p<\alpha_j$ and non-strict if $p>\alpha_j$.
    Condition (3) is equivalent to requiring $X'_i$ is contained in the Voronoi cell of one of the elements of $X_{\alpha}^i$.
    We may split the conditions on the Voronoi cells of $X_{\alpha}$ onto those with a strict inequality and those with an non-strict inequality.
    Using a similar inductive augment used when considering condition (1) in the previous part of the proof,
    we may now describe the possible values of $X'_1,\dots,X'_n$ under condition (3)
    as the intersection of an open and closed set, built from the strict and non-strict inequalities respectively to obtain a measurable set $C$.
    Similarly (4) gives us a measurable subset $D$ of $(\mathbb{R}^a)^n$. 
    
    Putting this all together, the subset of points in $(\mathbb{R}^a)^n \times (\mathbb{R}^a)^n$ we wish to describe, is the intersection of the sets $A \times (\mathbb{R}^a)^n$, $(\mathbb{R}^a)^n \times B$, $C \times (\mathbb{R}^a)^n$ and $(\mathbb{R}^a)^n \times D$.
    Since each of $A$, $B$, $C$ and $D$ are measurable sets, the intersection is a measurable set.
\end{proof}


\bibliographystyle{plain}
{\footnotesize
\bibliography{bib}{}

\begin{thebibliography}{10}

\bibitem{Alagappan2012}
M.~Alagappan.
\newblock From 5 to 13: Redefining the positions in basketball.
\newblock {\em MIT Sloan Sports Analytics Conference}, 2012.

\bibitem{BenDavid07}
S.~Ben-David.
\newblock A framework for statistical clustering with constant time
  approximation algorithms for k-median and k-means clustering.
\newblock {\em Machine Learning}, 66(2):243--257, Mar 2007.

\bibitem{Ben-David09}
S.~Ben-David and M.~Ackerman.
\newblock Measures of clustering quality: A working set of axioms for
  clustering.
\newblock In D.~Koller, D.~Schuurmans, Y.~Bengio, and L.~Bottou, editors, {\em
  Advances in Neural Information Processing Systems 21}, pages 121--128. Curran
  Associates, Inc., 2009.

\bibitem{BenDavidEtAl07}
S.~Ben-David, D.~P{\'a}l, and H.~U. Simon.
\newblock Stability of k-means clustering.
\newblock In N.~H. Bshouty and C.~Gentile, editors, {\em Learning Theory: 20th
  Annual Conference on Learning Theory, COLT 2007, San Diego, CA, USA; June
  13-15, 2007. Proceedings}, pages 20--34. Springer Berlin Heidelberg, 2007.

\bibitem{BenDavid_vonLuxburg08}
S.~Ben-David and U.~von Luxburg.
\newblock Relating clustering stability to properties of cluster boundaries.
\newblock In {\em COLT 2008}, pages 379--390, Madison, WI, USA, July 2008.
  Max-Planck-Gesellschaft, Omnipress.

\bibitem{BenDavidEtAl06}
S.~Ben-David, U.~von Luxburg, and D.~P{\'a}l.
\newblock A sober look at clustering stability.
\newblock In G.~Lugosi and H.~U. Simon, editors, {\em Learning Theory: 19th
  Annual Conference on Learning Theory, COLT 2006, Pittsburgh, PA, USA, June
  22-25, 2006. Proceedings}, pages 5--19. Springer Berlin Heidelberg, 2006.

\bibitem{BenHur02}
A.~Ben-Hur, A.~Elisseeff, and I~Guyon.
\newblock A stability based method for discovering structure in clustered data.
\newblock {\em Pacific Symposium on Biocomputing. Pacific Symposium on
  Biocomputing}, pages 6--17, 2002.

\bibitem{Bittner2000}
M.~Bittner~et al.
\newblock Molecular classification of cutaneous malignant melanoma by gene
  expression profiling.
\newblock {\em Nature}, 406:536 EP --, 2000.

\bibitem{Bowman2008}
G.~R. Bowman, X.~Huang, Y.~Yao, J.~Sun, G.~Carlsson, L.~J. Guibas, and V.~S.
  Pande.
\newblock Structural insight into rna hairpin folding intermediates.
\newblock {\em JACS Communications, pp}, pages 9676--9678, 2008.

\bibitem{Breiman1996}
L.~Breiman.
\newblock Bagging predictors.
\newblock {\em Machine Learning}, 24(2):123--140, Aug 1996.

\bibitem{breiman1998}
L.~Breiman.
\newblock Arcing classifier (with discussion and a rejoinder by the author).
\newblock {\em Ann. Statist.}, 26(3):801--849, 06 1998.

\bibitem{Camara2017}
P.~G. Camara.
\newblock Topological methods for genomics: Present and future direction.
\newblock {\em Current Opinion in Systems Biology}, 1:95--101, 2017.

\bibitem{Carlsson09}
G.~Carlsson.
\newblock Topology and data.
\newblock {\em Bull. Amer. Math. Soc. (N.S.)}, 46(2):255--308, 2009.

\bibitem{Carlsson2017}
G~Carlsson.
\newblock The shape of biomedical data.
\newblock {\em Current Opinion in Systems Biology}, 1, 2017.

\bibitem{Carlsson10}
G.~Carlsson and F.~M\'{e}moli.
\newblock Characterization, stability and convergence of hierarchical
  clustering methods.
\newblock {\em Journal of Machine Learning Research}, 11:1425--1470, 04 2010.

\bibitem{Carriere_17}
M.~Carri{\`e}re, B.~Michel, and S.~Oudot.
\newblock Statistical analysis and parameter selection for mapper.
\newblock {\em Journal of Machine Learning Research}, 19, 06 2017.

\bibitem{Carrire_Oudot16}
M.~Carri{\`e}re and S.~Oudot.
\newblock {Structure and Stability of the 1-Dimensional Mapper}.
\newblock In S{\'a}ndor Fekete and Anna Lubiw, editors, {\em 32nd International
  Symposium on Computational Geometry (SoCG 2016)}, volume~51 of {\em Leibniz
  International Proceedings in Informatics (LIPIcs)}, pages 25:1--25:16,
  Dagstuhl, Germany, 2016. Schloss Dagstuhl--Leibniz-Zentrum fuer Informatik.

\bibitem{Cecco2015}
L.~D. Cecco~et al.
\newblock Head and neck cancer subtypes with biological and clinical relevance:
  Meta-analysis of gene-expression data.
\newblock {\em Oncotarget}, 6:9627--9642, 2015.

\bibitem{Chan2013}
J.~M. Chan, G.~Carlsson, and R.~Rabadana.
\newblock Topology of viral evolution.
\newblock {\em Proceedings of the National Academy of Science}, 110, 2013.

\bibitem{Chang2013}
J.~Chang, M.~M. Nicolau, T.~R. Cox, D.~Wetterskog, J.~W. Martens, H.~E. Barker,
  and J.~T. Erler.
\newblock Loxll2 induces aberrant acinar morphogenesis via erbb2 signaling.
\newblock {\em Breast Cancer Research}, 15, 2013.

\bibitem{Dey16}
K.~T. Dey, F.~M{\'{e}}moli, and Wang Y.
\newblock Multiscale mapper: Topological summarization via codomain covers.
\newblock In {\em {SODA}}, pages 997--1013. {SIAM}, 2016.

\bibitem{Dey17}
K.~T. Dey, F.~M{\'{e}}moli, and Wang Y.
\newblock Topological analysis of nerves, reeb spaces, mappers, and multiscale
  mappers.
\newblock In {\em Symposium on Computational Geometry}, volume~77 of {\em
  LIPIcs}, pages 36:1--36:16. Schloss Dagstuhl - Leibniz-Zentrum fuer
  Informatik, 2017.

\bibitem{Dlotko19}
P.~{D{\l}otko}.
\newblock {Ball mapper: a shape summary for topological data analysis}.
\newblock {\em arXiv e-prints}, January 2019.

\bibitem{Duponchel2018a}
L.~Duponchel.
\newblock Exploring hyperspectral imaging data sets with topological data
  analysis.
\newblock {\em Analytica Chimica Acta}, 1000:123--131, 2018.

\bibitem{Duponchel2018b}
L.~Duponchel.
\newblock When remote sensing meets topological data analysis.
\newblock {\em Journal of Spectral Imaging}, 2018.

\bibitem{KeplerMapper}
J.~Hendrik and S.~Nathaniel.
\newblock Keplermapper.
\newblock http://doi.org/10.5281/zenodo.1054444, nov 2017.

\bibitem{Timothy2015}
T.~S.~C. Hinks~et al.
\newblock Innate and adaptive t cells in asthmatic patients: Relationship to
  severity and disease mechanisms.
\newblock {\em Journal of Allergy and Clinical Immunology}, 136(2):323--333,
  2015.

\bibitem{Hinks2016}
T.~S.~C. Hinks~et al.
\newblock Multidimensional endotyping in patients with severe asthma reveals
  inflammatory heterogeneity in matrix metalloproteinases and chitinase 3-like
  protein 1.
\newblock {\em J.Allergy Clin Immunol}, 138(1), 2016.

\bibitem{Jeitziner17}
R.~Jeitziner, M.~Carri\'{e}re, J.~Rougemont, S.~Oudot, K.~Hess, and C.~Brisken.
\newblock {Two-Tier Mapper: a user-independent clustering method for global
  gene expression analysis based on topology}.
\newblock {\em arXiv e-prints}, December 2017.

\bibitem{Kamruzzaman2017}
M.~Kamruzzaman, A.~Kalyanaraman, B.~Krishnamoorthy, and P.~Schnable.
\newblock Toward a scalable exploratory framework for complex high-dimensional
  phenomics data.
\newblock {\em arXiv e-prints}, 2017.

\bibitem{Kleinberg03}
J.~M. Kleinberg.
\newblock An impossibility theorem for clustering.
\newblock In S.~Becker, S.~Thrun, and K.~Obermayer, editors, {\em Advances in
  Neural Information Processing Systems 15}, pages 463--470. MIT Press, 2003.

\bibitem{LeeEtAl17}
Y.~Lee, S.~D. Barthel, P.~D{\l}otko, S.~M. Moosavi, K.~Hess, and B.~Smit.
\newblock Quantifying similarity of pore-geometry in nanoporous materials.
\newblock {\em Nature Communications}, 8:15396, May 2017.

\bibitem{Levine2001}
E.~Levine and E.~Domany.
\newblock Resampling method for unsupervised estimation of cluster validity.
\newblock {\em Neural Computation}, 13(11):2573--2593, 2001.

\bibitem{Li2015}
L.~Li, W.~Cheng, B.~S. Glicksberg, O.~Gottesman, R.~Tamler, R.~Chen, E.~P.
  Bottinger, and J.~T. Dudley.
\newblock Identification of type 2 diabetes sub-groups through topological
  analysis of patient similarity.
\newblock {\em Science Translational Medicine}, 7(311), 2015.

\bibitem{Pek2013}
P.~Y. Lum~et al.
\newblock Extracting insights from the shape of complex data using topology.
\newblock {\em Scientific Reports}, 3(1236), 2013.

\bibitem{Meila05}
M.~Meil\v{a}.
\newblock Comparing clusterings: An axiomatic view.
\newblock In {\em Proceedings of the 22Nd International Conference on Machine
  Learning}, ICML '05, pages 577--584, New York, NY, USA, 2005. ACM.

\bibitem{Monica2011}
N.~Monica, A.~J. Levine, and G.~Carlsson.
\newblock Topology based data analysis identifies a subgroup of breast cancers
  with a unique mutational profile and excellent survival.
\newblock {\em Proceedings of the National Academy of Sciences of the United
  States of America}, 108(17):7265--7270, 2011.

\bibitem{PythonMapper}
D.~M{\"{u}}llner and A.~Babu.
\newblock Python mapper: An open-source toolchain for data exploration,
  analysis and visualization.
\newblock http://danifold.net/mapper, 2013.

\bibitem{TDAmapper}
D.~M{\"{u}}llner, P.~Pearson, and G.~Singh.
\newblock {TDA} mapper.
\newblock https://github.com/paultpearson/TDAmapper, May 2010.

\bibitem{Nielson2015}
J.~L. Nielson~et al.
\newblock Topological data analysis for discovery in preclinical spinal cord
  injury and traumatic brain injury.
\newblock {\em Nature Communications}, 6:8581+, 2015.

\bibitem{OstrovskyEtAl06}
R.~Ostrovsky, Y.~Rabani, L.~Schulman, and C.~Swamy.
\newblock The effectiveness of lloyd-type methods for the k-means problem.
\newblock In {\em 2006 47th Annual {IEEE} Symposium on Foundations of Computer
  Science (FOCS'06)}. {IEEE}, 2006.

\bibitem{Pirashvili2018}
M.~Pirashvili, L.~Steinberg, F.~Belch\'{\i}~Guillamon, M.~Niranjan, J.~G. Frey,
  and J.~Brodzki.
\newblock Improved understanding of aqueous solubility modeling through
  topological data analysis.
\newblock {\em Journal of Cheminformatics}, 10(1):54, Nov 2018.

\bibitem{Riihimaki2019}
H.~Riihimaki, W.~Chacholski, J.~Theorell, J.~Hillert, and R.~Ramanujam.
\newblock A topological data analysis based classification method for multiple
  measurement.
\newblock {\em arXiv e-prints}, 03 2019.

\bibitem{Rizvi2017}
A.~H. Rizvi, P.~G. Camara, E.~K. Kandror, T.~J. Roberts, I.~Schieren,
  T.~Maniatis, and R.~Rabadan.
\newblock Single-cell topological rna-seq analysis reveals insights into
  cellular differentiation and development.
\newblock {\em Nature Biotechnology}, 35, 2017.

\bibitem{Nicolau2014}
D.~Romano~et al.
\newblock Topological methods reveal high and low functioning neuro-phenotypes
  within fragile x syndrome.
\newblock {\em Human Brain Mapping}, 35:4904--4915, 2014.

\bibitem{Rucco2015}
M.~Rucco, E.~Merelli, D.~Herman, D.~Ramanan, T.~Petrossian, L.~Falsetti,
  C.~Nitti, and A.~Salvi.
\newblock Using topological data analysis for diagnosis pulmonary embolism.
\newblock {\em Journal of Theoretical and Applied Computer Science}, 9:41--55,
  2015.

\bibitem{Sarikonda2014}
G.~Sarikonda~et al.
\newblock Cd8 t-cell reactivity to islet antigens is unique to type 1 while cd4
  t-cell reactivity exists in both type 1 and type 2 diabetes.
\newblock {\em Journal of Autoimmunity}, 50(Supplement C):77--82, 2014.

\bibitem{Savir2017}
A.~Savir, G.~Toth, and L.~Duponchel.
\newblock Topological data analysis (tda) applied to reveal pedoge- netic
  principles of european topsoil system.
\newblock {\em Science of the Total Environment}, 586(2):1091--1100, 2017.

\bibitem{Schofield2019}
J.~P.~R. Schofield~et al.
\newblock Stratification of asthma phenotypes by airway proteomic signatures.
\newblock {\em Journal of Allergy and Clinical Immunology}, 2019.

\bibitem{Shamir-Tishby10}
O.~Shamir and N.~Tishby.
\newblock Stability and model selection in k-means clustering.
\newblock {\em Machine Learning}, 80(2):213--243, Sep 2010.

\bibitem{Singh-Memoli-Carlsson07_foundationalMapperPaper}
G.~Singh, F.~M{\'e}moli, and G.~Carlsson.
\newblock Topological methods for the analysis of high dimensional data sets
  and 3{D} object recognition.
\newblock In {\em SPBG}. The Eurographics Association, 2007.

\bibitem{Strazzeri18}
F.~Strazzeri and R.~J. S{\'{a}}nchez{-}Garc{\'{\i}}a.
\newblock Morse theory and an impossibility theorem for graph clustering.
\newblock {\em CoRR}, abs/1806.06142, 2018.

\bibitem{Schneider2016}
B.~Y. Torres, J.~H.~O. Oliveira, A.~T. Tate, R.~Poonam, K.~Cumnock, and D.~S.
  Schneider.
\newblock Tracking resilience to infections by mapping disease space.
\newblock {\em PLOS Biology}, 14(6):e1002436, 2016.

\bibitem{vonLuxburg10_ClustStability_overview}
U.~von Luxburg.
\newblock Clustering stability: An overview.
\newblock {\em Found. Trends Mach. Learn.}, 2(3):235--274, March 2010.

\bibitem{vonLuxburgEtAl08}
U.~von Luxburg, S.~Bubeck, S.~Jegelka, and M.~Kaufmann.
\newblock Consistent minimization of clustering objective functions.
\newblock In {\em Advances in Neural Information Processing Systems 20}, pages
  961--968, Red Hook, NY, USA, September 2008. Max-Planck-Gesellschaft, Curran.

\end{thebibliography}
}

\end{document}